\renewcommand\section{\@startsection{section}{1}{\z@}%
           {25\p@ \@plus 6\p@ \@minus 3\p@}%
           {10\p@ \@plus 6\p@ \@minus 3\p@}%
           {\fontsize{13pt}{0cm}\selectfont\bfseries\boldmath}}
\renewcommand\subsection{\@startsection{subsection}{2}{\z@}%
           {13\p@ \@plus 6\p@ \@minus 3\p@}%
           {6\p@ \@plus 6\p@ \@minus 3\p@}%
           {\fontsize{12pt}{0cm}\itshape}}
\renewcommand\subsubsection{\@startsection{subsubsection}{3}{\z@}%
           {12\p@ \@plus 6\p@ \@minus 3\p@}%
           {\p@}%
           {\normalfont\normalsize\itshape}}
\renewcommand{\paragraph}[1]{%
  \par
  \addvspace{\medskipamount}
  \noindent
  \textbf{#1\@addpunct{.}}\enspace\ignorespaces
}
\let\oldtocsection=\tocsection
\let\oldtocsubsection=\tocsubsection
\let\oldtocsubsubsection=\tocsubsubsection
\renewcommand{\tocsection}[2]{
\hspace{0em}\bfseries\oldtocsection{#1}{#2}}
\renewcommand{\tocsubsection}[2]{\hspace{1em}\small\oldtocsubsection{#1}{#2}}
\renewcommand{\tocsubsubsection}[2]{\hspace{2em}\small\oldtocsubsubsection{#1}{#2}}
\crefname{subsection}{Subsection}{Subsection}
\DeclareSymbolFont{EulerExtension}{U}{euex}{m}{n}
\DeclareMathSymbol{\euintop}{\mathop} {EulerExtension}{"52}
\DeclareMathSymbol{\euointop}{\mathop} {EulerExtension}{"48}
\DeclareSymbolFont{cmletters}{OML}{cmm}{m}{it}
\DeclareSymbolFont{cmsymbols}{OMS}{cmsy}{m}{n}
\DeclareSymbolFont{cmlargesymbols}{OMX}{cmex}{m}{n}
\DeclareMathSymbol{\myjmath}{\mathord}{cmletters}{"7C}
\DeclareMathSymbol{\myamalg}{\mathbin}{cmsymbols}{"71}
\DeclareMathSymbol{\mycoprod}{\mathop}{cmlargesymbols}{"60}
\let\jmath\myjmath
\newtheorem{theorem}{Theorem}[section]
\newtheorem{proposition}[theorem]{Proposition}
\newtheorem{lemma}[theorem]{Lemma}
\newtheorem{corollary}[theorem]{Corollary}
\theoremstyle{definition}
\newtheorem{remark}[theorem]{Remark}
\newtheorem{definition}[theorem]{Definition}
\newtheorem{step}{Step}
\let\expandafter\oldproof\csname\string\proof\endcsname
\let\oldendproof\endproof
\renewenvironment{proof}[1][\proofname]{%
  \oldproof[\bfseries\itshape #1] 
}{\oldendproof}
\numberwithin{equation}{section}
\renewcommand{\theequation}{\thesection.\arabic{equation}}
\DeclareMathOperator*{\supp}{supp}
\newcommand{\toinfty}{\to\infty}
\newcommand{\eps}{\varepsilon}          
\renewcommand{\l}{\left}
\renewcommand{\r}{\right}
\newcommand{\la}{\left\langle}
\newcommand{\ra}{\right\rangle}
\newcommand{\R}{{\bf R}}
\newcommand{\N}{{\bf N}}
\newcommand{\NN}{{\bf Z}_{\ge 0}}
\newcommand{\Z}{{\bf Z}}
\newcommand{\RN}{\R^N}
\newcommand{\F}{\mathcal{F}}
\newcommand{\FF}{\mathcal{F}^{-1}}
\renewcommand{\d}{\partial}
\newcommand{\intrn}{\int_{\RN}}
\newcommand{\s}{\,\,\,}
\newcommand{\eqntag}{\addtocounter{equation}{1}\tag{\theequation}}
\newcommand{\leb}[2]{L^{#1}(#2)}
\newcommand{\Wdrn}{\dot{W}^{1,p}(\RN)}
\newcommand{\Wmp}{W^{m,p}(\RN)}
\newcommand{\Wdmp}{\dot{W}^{m,p}(\RN)}
\newcommand{\Hdsp}{\dot{H}^{s,p}(\RN)}
\newcommand{\p}{p^*}
\newcommand{\ppm}{p^*_m}        
\newcommand{\pps}{p^*_s}        
\newcommand{\dsl}[3]{#1_{#2,#3}}        
\newcommand{\bbl}[2]{#1^{#2}}                   
\newcommand{\Bnrl}{ \mathcal{B}_{n,R,L} }
\newcommand{\Jlp}{\mathcal{J}_{l,L}^+}
\newcommand{\Jln}{\mathcal{J}_{l,L}^-}
\newcommand{\Jlz}{\mathcal{J}_{l,L}^0}
\begin{document}


\title[Profile Decomposition in homogeneous Sobolev spaces]{Profile decomposition in Sobolev spaces and decomposition of integral functionals~II: \\ homogeneous case} 

\author[M.~Okumura]{Mizuho Okumura} 


\renewcommand{\thefootnote}{\fnsymbol{footnote}}
\footnote[0]{2020\textit{ Mathematics Subject Classification}.
46B50, 46B06, 46E35.}

\keywords{ 
Profile decomposition, 
decomposition of integral functionals,
lack of compactness, 
concentration-compactness,
Brezis-Lieb lemma, 
$G$-weak convergence, 
dislocation space, 
$G$-complete continuity.
}
\address{
Graduate School of Science \endgraf
Tohoku University \endgraf
Sendai 980-8578 \endgraf
Japan
}
\email{okumura.mizuho.p3@dc.tohoku.ac.jp}




\maketitle

\begin{abstract}
The present paper is devoted to a theory of profile decomposition for bounded sequences in \emph{homogeneous} Sobolev spaces, and it enables us to analyze the lack of compactness of bounded sequences.  For every bounded sequence in homogeneous Sobolev spaces, the sequence is asymptotically decomposed into the sum of profiles with dilations and translations and a double suffix residual term. One gets an energy decomposition in the homogeneous Sobolev norm. The residual term becomes arbitrarily small in the critical Lebesgue or Sobolev spaces of lower order, and then, the results of decomposition of integral functionals are obtained, which are important strict decompositions in the critical Lebesgue or Sobolev spaces where the residual term is vanishing.
\end{abstract}

{\small %
\tableofcontents
}

\section{Introduction}
\subsection{Prologue}
The lack of compactness of bounded (mainly function) sequences in infinite dimensional normed vector spaces has been a significant and critical difficulty in mathematical  analysis.
To overcome the difficulty, a lot of researchers have been establishing a variety of ways to discuss the lack of compactness and the recovery of compactness. 
Classical and well-known  results are, for instance, the Brezis-Lieb lemma~\cite{B-L}, the concentration-compactness principles by Lions~\cite{Lions 84 1, Lions 84 2, Lions 85 1, Lions 85 2}, the global compactness results by Struwe~\cite{Struwe}, Brezis-Coron~\cite{BrezisCoron85} and Bahri-Coron~\cite{BahriCoron88}, and so on. 

Profile decomposition is a way of asymptotic analysis of general bounded sequences that may have the defect of compactness, and originated in attempts to give an asymptotic decomposition of bounded sequences in some function spaces such as Lebesgue spaces, Sobolev spaces and so on. 
It states that every bounded sequence in a certain Banach space is asymptotically decomposed into a sum of \emph{moving} profiles and a residual term, where that \emph{movements} of profiles are considered to be descriptions of the defect of compactness.  

Roughly speaking, there are two types of principles of profile decomposition in terms of the residual term: one with a single-suffix residual term as 
is treated in, e.g., Solimini~\cite{Solimini}, Tintarev-Fieseler~\cite{T-K} and Tintarev~\cite{T-01};
%
another with a double-suffix residual term as 
is treated in, e.g., G\'erard~\cite{Gerard}, Jaffard~\cite{Jaffard} and Bahouri-Cohen-Koch~\cite{B-C-K}.
%
The differences and relations between the above two types of principles are described in, e.g.,~\cite{DevillanovaSolimini,Okumura2}.
Most profile decomposition theorems offered in the above literature focus on isometric group actions on function spaces, named ``dislocations", which fairly prevents bounded sequences from converging strongly. By making use of such group actions appropriately, generic principles of profile decomposition have been successfully established.

Here we briefly review the results of Solimini and Jaffard. 
Solimini~\cite{Solimini} developed the profile decomposition with a single-suffix residual term in the following sense. For any bounded sequence $(u_n)$ in $\dot{W}^{1,p}(\RN)$  $(1<p<N)$, there exist $\bbl{w}{l}\in \dot{W}^{1,p}(\RN)$ $(l\in\NN)$ and $(\dsl{y}{l}{n},\dsl{j}{l}{n})\in\RN\times\R$ $(l,n\in\NN)$ such that, up to a subsequence, 
\begin{align*}\eqntag\label{Solimini1}
u_n = \sum_{l=0}^\infty 2^{\dsl{j}{l}{n}N/\p} \bbl{w}{l} ( 2^{\dsl{j}{l}{n}} (\cdot -\dsl{y}{l}{n}) ) + r_n, \quad n\in\NN, 
\end{align*}
with 
\begin{align*}
&
2^{-\dsl{j}{l}{n}N/\p} u_n ( 2^{-\dsl{j}{l}{n}}\cdot +\dsl{y}{l}{n} )\to \bbl{w}{l}
\s\mbox{weakly in}\ \Wdrn, 
\\
&
|\dsl{j}{l}{n}-\dsl{j}{k}{n}| + 2^{\dsl{j}{k}{n}}|\dsl{y}{l}{n}-\dsl{y}{k}{n}|\toinfty
\s\mbox{if}\ l\neq k \s(n\toinfty),
\\
&
\lim_{n\toinfty} \norm{r_n}_{\leb{\p,q}{\RN}}=0, \quad  q>\p,
\\
&
\limsup_{n\toinfty} \norm{u_n}^p_{\Wdrn} \ge 
\sum_{l=0}^\infty \norm{\bbl{\phi}{l}}^p_{\Wdrn}, 
\end{align*}
where $\p\coloneqq pN/(N-p)$ denotes the Sobolev critical exponent and $\leb{\p,q}{\RN}$ denotes the Lorentz space.

On the other hand, Jaffard~\cite{Jaffard} developed the profile decomposition with a double-suffix residual term in the following sense. For any bounded sequence $(u_n)$ in $\dot{H}^{s,p}$, there exist $\bbl{w}{l}\in\dot{H}^{s,p}(\RN)$ $(l\in\NN)$ and $(\dsl{y}{l}{n},\dsl{j}{l}{n})\in\RN\times \R$ $(l,n\in\NN)$ such that, up to a subsequence,  
\begin{align*}\eqntag\label{Jaffard1}
u_n = \sum_{l=0}^L 2^{\dsl{j}{l}{n}N/\p} \bbl{w}{l} ( 2^{\dsl{j}{l}{n}} (\cdot -\dsl{y}{l}{n}) ) + r^L_n, \quad L,n\in\NN, 
\end{align*}
with 
\begin{align*}
&
|\dsl{j}{l}{n}-\dsl{j}{k}{n}| + 2^{\dsl{j}{k}{n}}|\dsl{y}{l}{n}-\dsl{y}{k}{n}|\toinfty
\s\mbox{if}\ l\neq k \s(n\toinfty),
\\
&
\lim_{L\toinfty}\varlimsup_{n\toinfty} \norm{r^L_n}_{\leb{\pps}{\RN}}=0, 
\\
&
\limsup_{n\toinfty}\norm{u_n}^p_{\Hdsp}
\ge \sum_{l=0}^\infty \norm{\bbl{\phi}{l}}^p_{\Hdsp}. 
\end{align*}
For more details of profile decomposition and applications of it, we refer the reader to~\cite{B-C-K, Gerard, Jaffard, Okumura2, Solimini, tao blog-book, T-01, T-K} and references therein. 
Also the relations between profile decomposition and classical ways of analysis of the lack of compactness are shown in~\cite{Okumura2}. 

The theory of Solimini and its extensions to Banach and Hilbert spaces by Tintarev et al.\ are successful in characterizing the profiles $\bbl{w}{l}$ with $u_n$ by the use of the weak convergence.
However, the well-definedness or the convergence of the infinite sum on~\eqref{Solimini1} matter delicately and need some complicated arguments for verification, named ``\emph{routing procedure}'' in~\cite{DevillanovaSolimini}. 
On the other hand, in the theory of G\'erard, Jaffard and Bahouri et al.\ appears the finite sum of scaled profiles (also called \emph{dislocated profiles}) and the well-definedness and convergence do not matter. 
Also, Jaffard~\cite[p.386]{Jaffard} remarks that the finite sum on~\eqref{Jaffard1} cannot be replaced with the infinite sum as in~\eqref{Solimini1} because it does not usually converge.

\paragraph{Aim of the paper}
Under these circumstances, in~\cite{Okumura2} the author developed a profile decomposition theorem in \emph{inhomogeneous} Sobolev spaces, which was 
a \emph{hybrid type} profile decomposition of~\cite{Solimini, T-01, T-K} and~\cite{B-C-K,Gerard, Jaffard}. In his theory, each profile of a bounded sequence is well characterized by the weak convergence and an isometric group action on inhomogeneous Sobolev spaces, that is, \emph{translations} as in~\cite{Solimini, T-01, T-K}, but only a finite sum appears so that the well-definedness and convergence are always valid as in~\cite{B-C-K,Gerard, Jaffard}.
He also investigated significant results of decomposition of integral functoinals (also regarded as the iterated Brezis-Lieb lemma and investigated by, e.g.,~\cite{T-K,T-01}), revealing that the profile decomposition leads to an asymptotic strict decomposition of integral functionals in suitable Lebesgue or Sobolev spaces where the residual term of profile decomposition is vanishing. 

In the present paper, we shall develop a ``\emph{homogeneous}" version of a theory of profile decomposition and decomposition of integral functionals in contrast to~\cite{Okumura2}, proving them in the same spirit.
As for the energy decomposition in the Sobolev norm (see~\eqref{eq;energy estim Wdmp} below), it will be a sharper version of the ordinary energy decompositions provided by precursors which do not include the ``residual term''. 
Results provided below will be well applied to studies of PDEs and Calculus of Variations. 

\subsection{Notation and settings}
As is mentioned above, the way of profile decomposition needs a  suitable setup of group actions responsible for the lack of compactness. Those group actions are called ``\emph{dislocations}" and they are built up from functional analytic viewpoints inspired by~\cite{T-01,T-K}.

\smallskip
Throughout the paper, we often use the following  
notation. 
%
\begin{enumerate}
\setlength{\itemsep}{0mm}
\setlength{\parskip}{0mm}

    \item We write $(n_j) \preceq (n)$ when the left-hand side is a subsequence of the right-hand side. 
    \item The set of integers greater than or equal to $l \in \Z$ is denoted by $\Z_{\ge l}$.
    \item For $\Lambda \in \N \cup \{0,+\infty\}$, the set of integers at least zero and at most  $\Lambda$ is denoted by $\NN^{<\Lambda+1}$, where we assume that $\Lambda +1 = +\infty$ when $\Lambda = +\infty$, i.e., 
    $\NN^{<\Lambda+1} = \qty{ 0, \ldots, \Lambda }$ if $\Lambda <+\infty$ and 
    $\NN^{<\Lambda+1} = \NN$ if $\Lambda = +\infty$. 
    Moreover, for real numbers $(a_L)_L$, we often write  $\lim_{L\to\Lambda} a_L = a_\Lambda$ whenever $\Lambda<\infty$. 
    \item We denote by $C$ a non-negative constant, which does not
depend on the elements of the corresponding space or set and may vary from line to line.
    \item For an exponent $p \in [1,\infty]$,  we denote by $p'$ the  H\"{o}lder conjugate exponent of $p$: $p'=p/(p-1)$ if $p\in]1,\infty[;$ $p'=\infty$ if $p=1;$ $p'=1$ if $p=\infty$. 
    \item For a normed space $X$,  we denote its norm by $\|\cdot\|$ or $\|\cdot\|_{X}$, 
    denote its dual space by $X^*$ and 
    denote the duality pairing between $X,X^*$ by $\la \cdot,\cdot\ra$ or $\la\cdot,\cdot\ra_X$.
    \item For a normed space $X$, we denote by $B_X(r)$ 
    the closed ball of radius $r>0$ centered at the origin, that is, 
    $B_X(r) = \qty{ u \in X; \ \|u\|_X \le r }$. 
    Meanwhile, the closed ball in $\RN$ of radius $r>0$ centered at $p\in \RN$ is denoted by $B(p,r)$, that is, 
    $B(p,r)\coloneqq \qty{x\in \RN; \ |x-p|\le r}$. 
    \item For a normed space $X$,  we denote by $\mathcal{B}(X)$ 
    the normed space of all bounded linear operators on $X$ equipped with the operator norm. 
    \item For a Banach space $X$ and for a
    bounded operator $T\in \mathcal{B}(X)$,  the adjoint operator
    of $T$ is denoted by $T^*$.
\end{enumerate}

\smallskip
We shall follow and employ the setup for the framework of profile decomposition as in~\cite{Okumura2}, and we here recall them briefly. 
Let $(X,\|\cdot\|_X)$ be a Banach space and let $G\subset\mathcal{B}(X)$ be a group (under operator composition) of bijective isometries. 

\begin{definition}\label{def;DslSp}
\begin{enumerate}
\setlength{\itemsep}{0mm}
\setlength{\parskip}{0mm}

\item (\emph{Operator convergence}) For a sequence $(g_n)$ in $\mathcal{B}(X)$, the operator-strong convergence of $(g_n)$ is defined as the pointwise strong convergence in $X$, while the operator-weak convergence of $(g_n)$ is defined as the pointwise weak convergence in $X$. 

\item (\emph{$G$-weak convergence}) A sequence $(u_n)$ in $X$ is said to be $G$-weakly convergent to $u\in X$ provided that 
\begin{equation*}
    \lim_{n\to\infty}\sup_{g\in G} \qty|\la \phi,g^{-1}(u_n-u)\ra |=0
    \quad\mbox{for all}\ \phi\in X^*.
\end{equation*}

\item (\emph{Dislocation group}) A group $G$ of linear bijective isometries is called a \emph{dislocation group} if the following two conditions are satisfied. 
\begin{enumerate}
    \item For any sequence $(g_n)$ in $G$ with $g_n\not\rightharpoonup 0$, there exists a subsequence $(n_j)\preceq (n)$ such that $(g_{n_j})$ is convergent operator-strongly.
    
    \item For any sequence $(g_n)$ in $G$ with $g_n\not\rightharpoonup 0$ and for any sequence $(u_n)$ in $X$ with $u_n\to 0$ weakly in $X$,  there exists a subsequence $(n_j)\preceq (n)$ such that $g_{n_j}u_{n_j}\to 0$ weakly in $X$.
\end{enumerate}

\item (\emph{Dislocation space}) The pair $(X,G)$ is called a \emph{dislocation space} when $G$ is a dislocation group. 

\item (\emph{$G$-complete continuity}) For a normed space $Y$, a linear operator $T: X\to Y$ is said to be $G$-completely continuous if every $G$-weakly convergent sequence in $X$ is mapped to a strongly convergent sequence in $Y$.  
\end{enumerate}
\end{definition}

Regarding the above settings, we shall make some remarks as in~\cite{Okumura2}:
\begin{remark}\label{rem;DslSp}
\rm 
\begin{enumerate}
\setlength{\itemsep}{0mm}
\setlength{\parskip}{0mm}

\item If $G=\{ {\rm Id}_X \},$ then the $G$-weak convergence is nothing but the ordinary weak convergence, i.e., the $G$-weak convergence is an extension of the weak convergence.  

\item Suppose, in addition, that $G$ satisfies that for every sequence $(g_n)$ in $G$ with $g_n\not\rightharpoonup 0$, the adjoint $(g_n^*)$ has an operator-strongly convergent subsequence. Then the condition~(b) in~(iii) of \cref{def;DslSp} above is always satisfied. 

\item If $(X,\|\cdot\|_1)$ is equipped with another equivalent and complete norm $\|\cdot\|_2$, if the action of $G$ on $X$ is isometric in both $\|\cdot\|_1$ and $\|\cdot\|_2$, and if $((X,\|\cdot\|_1),G)$ is a dislocation space, then the pair $((X,\|\cdot\|_2),G)$ is also a dislocation space. In other words, the definition of dislocation spaces is irrelevant to the choice of equivalent norms. 

\item The $G$-complete continuity is also called $G$-cocompactness in, e.g., \cite{T-01} and some Tintarev's papers.
\end{enumerate}
\end{remark}

For more details of properties or examples of dislocation spaces or $G$-complete continuity, we refer the reader to~\cite{Okumura2, T-01, T-K} and references therein. 

\subsection{Strategy of the paper}
As is developed in~\cite{Okumura2}, we here briefly review a \emph{recipe} of a theory of profile decomposition and decomposition of integral functionals. 
To construct the theory, one should prepare a pair $(X,G,Y)$ where $(X,G)$ is a dislocation Sobolev space which is embedded into some Lebesgue or Sobolev space $Y$ $G$-completely continuously. 
The profile decomposition in $(X,G)$ is obtained in three steps: (i) finding profiles by the use of~\cite[Theorem~2.1]{Okumura2} (which will be exhibited in \cref{section:appendixA} below); (ii) a decomposition in the Sobolev norm and the exactness condition of profile decomposition; (iii) vanishing of the residual term in $Y$. 
The spirit of decomposition of integral functionals is: they are developed in Lebesgue or Sobolev spaces $Y$ into which $(X,G)$ is embedded $G$-completely continuously. 
Here the essential difficulty and difference in homogeneous case  compared to the inhomogeneous case~\cite{Okumura2} are calculations in the energy decomposition in the homogeneous Sobolev norm. 
To this end, we shall employ an appropriate separation of the domain (see \cref{lemma;energy estim Wdmp,lemma;cut off induction,lemma;calc in proof of claim E}).

\subsection{Organization of the paper}
In \cref{section;ProDecoHomSobolev}, we shall establish a theorem of profile decomposition in homogeneous Sobolev spaces, which is our main interest of the paper. 
In \cref{BLcrit}, the results of decomposition of integral functionals subordinated to the profile decomposition in homogeneous Sobolev spaces will be discussed. 
Results and their proofs will be given in the same sections.

\section[Profile decomposition]{Profile decomposition in  homogeneous Sobolev spaces}\label{section;ProDecoHomSobolev}

\subsection{Settings}
We begin with the definition of \emph{homogeneous} Sobolev spaces.
Let $\Omega$ be an open set in $\RN$.
For $1\le p < N/m, \ m, N\in\N$,  the \emph{homogeneous} Sobolev space $\dot{W}^{m,p}(\Omega)$ is 
defined by 
\begin{equation}\notag
\dot{W}^{m,p}(\Omega) =  \overline{C_c^\infty(\Omega)}^{\|\cdot\|_{\dot{W}^{m,p}(\Omega)}},
\end{equation}
where 
\begin{equation}\notag
\|u\|_{\dot{W}^{m,p}(\Omega)}=\l( \sum_{|\alpha|=m} \|\d^{\alpha}u\|_{L^p(\Omega)}^p \r)^{1/p}.
\end{equation}%
Here for a multi-index $\alpha = (\alpha_1, \ldots,\alpha_N) \in (\NN)^N$,  
$$
|\alpha|=\sum_{n=1}^N \alpha_n \mbox{\quad and\quad}
\d^\alpha = \frac{\d^{|\alpha|}}{\d x_1^{\alpha_1}\cdots \d x_N^{\alpha_N}},
$$
and derivatives are in the sense of distributions.
We denote by $\ppm$ the Sobolev critical exponent:
$\ppm=pN/(N-mp)$ if $N>mp;$ $ \ppm =\infty$ if $N\le mp$.

    The reader ought to be careful in reading~\cite{T-01} 
    because Tintarev uses other notation for Sobolev spaces. 
    He denotes the Sobolev spaces defined as above by $\dot{H}^{m,p}(\Omega)$ instead. 
    The assumption $p<N/m$ in considering homogeneous Sobolev spaces is attributed to the fact that otherwise homogeneous Sobolev spaces  are no longer spaces of measurable functions, i.e., there are no longer injections from $\dot{W}^{m,p}(\RN)$ 
    into $L^1_{\rm loc}(\RN)$ (see also \cite[Remark~2.2]{T-K}).

We next provide \emph{actions of dislocations} defined on homogeneous Sobolev spaces. 

\begin{definition}\label{definition;dislocation of Sobolev}
Let $q \in \R$.  
Define a group action on $L^1_{\rm loc}(\RN)$ as follows: 
\begin{align*}
&
G[\RN \!, \, \Z; q] 
\\
&\quad 
\coloneqq 
\qty{
g[y,j;q] : L^1_{\rm loc}(\RN) \to L^1_{\rm loc}(\RN)
\left| \ 
\begin{aligned}
&
g[y,j;q ] u(\cdot) \coloneqq 
2^{jN/q} u(2^j(\cdot - y)), 
\\ 
&
u \in L^1_{\rm loc}(\RN), \ y \in \RN, \ j\in\Z 
\end{aligned}
\right. 
}
\end{align*}
When $q=\ppm,$ this is a group of bijective isometries on the Sobolev space $\Wdmp$ defined as above:
\begin{equation*}
\|g[y,j;\ppm] u\|_{\Wdmp} = \|u\|_{\Wdmp},  \quad u\in\Wdmp, \ y\in\RN, \  j\in\Z.
\end{equation*}
Moreover, inverse mappings 
are given as follows.
For $g[y,j;\ppm], g[z,k;\ppm]\in G[\RN \!, \, \Z;\ppm]$,  $u\in \Wdmp \ (1\le p < N/m)$, 
\begin{align*}
g[y,j;\ppm]^{-1}u(\cdot)&=2^{-jN/\ppm}u( 2^{-j} \cdot+y)
=g[-2^jy,-j;\ppm]u(\cdot), \\
g[z,k;\ppm]^{-1}g[y,j;\ppm] u(\cdot) 
&=2^{(j-k)N/\ppm}u( 2^{j-k}(\cdot -2^{k}(y-z))) \\
&=g[2^k(y-z), j-k;\ppm]u(\cdot).
\end{align*}
\end{definition}

\begin{remark}
\rm 
One can always replace $G[\RN,\Z;\ppm]$ with $G[\RN,\R;\ppm]$ in all of results in this paper. 
\end{remark}

The following lemma characterizes the operator-weak convergence of dislocations on Sobolev spaces. 

\begin{lemma}\label{lemma;Sobolev dsl weak conv}
Let $1<p<N/m$, $m,N \in \N$, and let $(g[y_n,j_n;\ppm])$ and $(g[z_n,k_n;\ppm])$ be 
sequences in $G[\RN \!, \Z;\ppm]$. As bounded operators 
on $\Wdmp$, the  operator-weak convergence 
$g[y_n,j_n;\ppm]\rightharpoonup 0$ is 
equivalent to $|y_n|+|j_n|\toinfty$,  and 
$g[z_n,k_n;\ppm]^{-1}g[y_n,j_n;\ppm] \rightharpoonup 0$ 
is equivalent to $|j_n-k_n|+2^{k_n}|y_n-z_n| \toinfty$.
\end{lemma}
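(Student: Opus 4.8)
\emph{Proposed approach.} The plan is to reduce both equivalences to a single one. By the composition formula in \cref{definition;dislocation of Sobolev},
\[
g[z_n,k_n;\ppm]^{-1}g[y_n,j_n;\ppm] = g[\,2^{k_n}(y_n-z_n),\, j_n-k_n;\,\ppm\,],
\]
so writing $\tilde y_n := 2^{k_n}(y_n-z_n)\in\RN$ and $\tilde j_n := j_n-k_n\in\Z$, the second assertion is exactly the first applied to $(g[\tilde y_n,\tilde j_n;\ppm])$, the condition $|\tilde y_n|+|\tilde j_n|\toinfty$ becoming $2^{k_n}|y_n-z_n|+|j_n-k_n|\toinfty$. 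Hence it suffices to prove that $g[y_n,j_n;\ppm]\rightharpoonup 0$ on $\Wdmp$ if and only if $|y_n|+|j_n|\toinfty$.

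My first step is to transfer weak convergence to the level of top-order derivatives. Since $\|u\|_{\Wdmp}^p=\sum_{|\alpha|=m}\|\d^\alpha u\|_{L^p(\RN)}^p$, the map $T:u\mapsto(\d^\alpha u)_{|\alpha|=m}$ is a linear isometry of $\Wdmp$ onto a closed subspace of $Z:=\bigoplus_{|\alpha|=m}L^p(\RN)$; by a Hahn--Banach extension argument, $u_n\rightharpoonup u$ in $\Wdmp$ is then equivalent to $\d^\alpha u_n\rightharpoonup\d^\alpha u$ in $L^p(\RN)$ for every $|\alpha|=m$. Computing $\d^\alpha(g[y,j;\ppm]u)=2^{jN/\ppm+jm}(\d^\alpha u)(2^j(\cdot-y))$ and using the identities $N/\ppm=(N-mp)/p$ and $N/\ppm+m=N/p$, one checks that the operator $S_nv:=2^{j_nN/\ppm+j_nm}\,v(2^{j_n}(\cdot-y_n))$ preserves the $L^p(\RN)$ norm. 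Thus $g[y_n,j_n;\ppm]\rightharpoonup 0$ is equivalent to $S_n(\d^\alpha u)\rightharpoonup 0$ in $L^p(\RN)$ for all $u\in\Wdmp$ and all $|\alpha|=m$.

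For the forward implication, since the $g[y_n,j_n;\ppm]$ are isometries (hence uniformly bounded) and $1<p<\infty$ makes $C_c^\infty(\RN)$ dense both in $\Wdmp$ and in $L^{p'}(\RN)=(L^p(\RN))^*$, it suffices to show $\la S_nv,\phi\ra\to 0$ for $v,\phi\in C_c^\infty(\RN)$ (with $v=\d^\alpha u$, $u\in C_c^\infty$). If this failed, along a subsequence $|\la S_nv,\phi\ra|\ge\delta>0$, and since $|y_n|+|j_n|\toinfty$ with $j_n\in\Z$, I extract a further subsequence of one of three types. If $j_n\to+\infty$, the change of variables $\la S_nv,\phi\ra=2^{-j_nN/p'}\int v(z)\phi(y_n+2^{-j_n}z)\,dz$ has a prefactor tending to $0$ while the integral is bounded by $\|v\|_{L^1}\|\phi\|_{L^\infty}$; if $j_n\to-\infty$, the dual form $\la S_nv,\phi\ra=2^{j_nN/p}\int v(2^{j_n}(x-y_n))\phi(x)\,dx$ has prefactor tending to $0$ and integral bounded by $\|v\|_{L^\infty}\|\phi\|_{L^1}$; and if $j_n$ stays bounded, then necessarily $|y_n|\toinfty$, so after fixing $j_n\equiv j$ the support $y_n+2^{-j}\supp v$ of $x\mapsto v(2^j(x-y_n))$ is eventually disjoint from $\supp\phi$ and the pairing is $0$. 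Each case contradicts $\delta>0$, giving $g[y_n,j_n;\ppm]\rightharpoonup 0$.

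For the converse I argue by contrapositive. If $|y_n|+|j_n|\not\toinfty$, I extract a subsequence along which $j_n$ is bounded, hence eventually constant $j_n\equiv j$, and $y_n\to y_*$ in $\RN$. Taking any $u\in C_c^\infty(\RN)\0$, continuity of translation and dilation in the $L^p(\RN)$ norm yields $g[y_n,j;\ppm]u\to g[y_*,j;\ppm]u$ strongly, hence weakly, in $\Wdmp$, and the limit is nonzero because $g[y_*,j;\ppm]$ is an isometry; thus $g[y_n,j_n;\ppm]u\not\rightharpoonup 0$ along this subsequence and $g[y_n,j_n;\ppm]\not\rightharpoonup 0$. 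I expect the main obstacle to be the forward implication: the careful bookkeeping of the scaling exponents and the choice between the two changes of variables are what force all three regimes of $j_n$ to collapse to $0$, and it is precisely here, through the density of $C_c^\infty(\RN)$ in $L^{p'}(\RN)$, that the hypothesis $p>1$ is used.
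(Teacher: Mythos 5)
Your proposal is correct, but it cannot be compared line-by-line with ``the paper's proof'' for a simple reason: the paper does not prove \cref{lemma;Sobolev dsl weak conv} at all, it merely cites \cite[Lemmas~3.1 and~5.1]{T-K}. What your argument provides is a self-contained proof in place of that citation, and it is sound. Your reduction of the second equivalence to the first via the composition identity $g[z_n,k_n;\ppm]^{-1}g[y_n,j_n;\ppm]=g[2^{k_n}(y_n-z_n),\,j_n-k_n;\ppm]$ uses exactly the formula recorded in \cref{definition;dislocation of Sobolev}. Your passage to top-order derivatives --- the isometric embedding $u\mapsto(\d^\alpha u)_{|\alpha|=m}$ into $\bigoplus_{|\alpha|=m}L^p(\RN)$ together with Hahn--Banach, and the identity $\d^\alpha(g[y,j;\ppm]u)=g[y,j;p](\d^\alpha u)$ --- is the same device the paper itself deploys in \cref{lemma;dual Wdmp} and inside the proof of \cref{prop;Wdmp dislocation space}, so your proof is stylistically consistent with the paper. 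The core of your argument (uniform boundedness of the isometries plus density of $C_c^\infty(\RN)$ in $\Wdmp$ and in $L^{p'}(\RN)$, then the trichotomy $j_n\to+\infty$, $j_n\to-\infty$, $j_n$ bounded with $|y_n|\toinfty$, handled respectively by the two changes of variables and by eventual disjointness of supports) is essentially the standard argument behind the cited lemmas of \cite{T-K}; what your version buys is that the paper would no longer lean on an external reference, and it makes explicit precisely where the hypothesis $p>1$ enters (density of $C_c^\infty$ in $L^{p'}$, and the decay of the prefactor $2^{-j_nN/p'}$, both of which fail for $p=1$) and where the discreteness $j_n\in\Z$ is exploited.

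One small imprecision worth fixing: in the converse you write that a bounded integer sequence is ``hence eventually constant $j_n\equiv j$''; it is not eventually constant, but it is constant along a further subsequence (pigeonhole), which is all you need --- since exhibiting a single subsequence along which $\la\psi,g[y_n,j_n;\ppm]u\ra$ tends to a nonzero limit already negates $g[y_n,j_n;\ppm]u\rightharpoonup 0$ for the full sequence, the extra extraction is harmless. With that wording repaired, the proof is complete.
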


\begin{proof}
See~\cite[Lemmas~3.1  and~5.1]{T-K}.
\end{proof}

Under these settings, homogeneous Sobolev spaces form dislocation spaces.

\begin{proposition}\label{prop;Wdmp dislocation space}
Let $p\in ]1,N/m[$ and let $m, N\in \N$ with $m<N$. 
Then  $\qty( \Wdmp, G[\RN \!, \, \Z;\ppm])$ is a dislocation space.
\end{proposition}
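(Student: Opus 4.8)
The plan is to verify directly the two defining conditions (a) and (b) of a dislocation group in \cref{def;DslSp}(iii) for $G=G[\RN,\Z;\ppm]$ acting on $\Wdmp$, using the characterization of operator-weak convergence supplied by \cref{lemma;Sobolev dsl weak conv} and the composition formulas in \cref{definition;dislocation of Sobolev}. Since $1<p<\infty$ throughout, the base space and its dual are well behaved, and this is what ultimately makes condition (b) work.

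For condition (a), I would start from a sequence $g_n=g[y_n,j_n;\ppm]$ with $g_n\not\rightharpoonup 0$. By \cref{lemma;Sobolev dsl weak conv}, the negation of $g_n\rightharpoonup 0$ means $|y_n|+|j_n|\not\to\infty$, so after passing to a subsequence I may assume $|y_n|+|j_n|\le C$. The $j_n$ then lie in a finite subset of $\Z$, so a further subsequence makes $j_n\equiv j$ constant, and the bounded $(y_n)$ admits a further subsequence with $y_n\to y$. I would then show $g_n\to g[y,j;\ppm]$ operator-strongly: using the composition formulas and the fact that every $g[\cdot,\cdot;\ppm]$ is an isometry, one reduces $\|g_n u-g[y,j;\ppm]u\|_{\Wdmp}$ to $\|u(\cdot-h_n)-u\|_{\Wdmp}$ with $h_n=2^{j}(y_n-y)\to 0$, which tends to $0$ by continuity of translation in $L^p$ applied to each derivative $\partial^\alpha u$, $|\alpha|=m$.

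For condition (b), given additionally $u_n\rightharpoonup 0$, I pass to the subsequence from (a), so $g_n=g[y_n,j;\ppm]$ with $y_n\to y$, and factor $g_n=g[y,j;\ppm]\,\tau_n$ where $\tau_n$ is translation by $h_n=2^{j}(y_n-y)\to 0$. As $g[y,j;\ppm]$ is a fixed bounded (hence weak-weak continuous) operator, it suffices to prove $\tau_n u_n\rightharpoonup 0$. The key point is \emph{not} to pit the operator-strong convergence of $\tau_n$ against the moving $u_n$, which fails, but to transfer the translation onto the dual: via the isometric embedding $u\mapsto(\partial^\alpha u)_{|\alpha|=m}$ of $\Wdmp$ into $\prod_{|\alpha|=m}L^p(\RN)$ and Hahn-Banach, every $\phi\in(\Wdmp)^*$ is represented by a family $(f_\alpha)\in\prod_{|\alpha|=m}L^{p'}(\RN)$ with $\la\phi,u\ra=\sum_{|\alpha|=m}\int f_\alpha\,\partial^\alpha u\dx$. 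Since $\partial^\alpha$ commutes with translation, a change of variables gives
\[
\la \phi,\tau_n u_n\ra=\la\phi,u_n\ra+\sum_{|\alpha|=m}\int \big(f_\alpha(\cdot+h_n)-f_\alpha\big)\,\partial^\alpha u_n\dx.
\]
The first term tends to $0$ because $u_n\rightharpoonup 0$; the second is bounded by $\big(\max_\alpha\|f_\alpha(\cdot+h_n)-f_\alpha\|_{L^{p'}}\big)\sum_\alpha\|\partial^\alpha u_n\|_{L^p}$, which tends to $0$ by continuity of translation in $L^{p'}$ and the boundedness of $(u_n)$. Hence $\tau_n u_n\rightharpoonup 0$, and thus $g_n u_n\rightharpoonup 0$. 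Equivalently, this computation verifies the adjoint criterion of \cref{rem;DslSp}(ii), from which (b) follows automatically.

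I expect the main obstacle to be precisely condition (b): combining the operator-strong convergence $g_n\to g$ from (a) with $u_n\rightharpoonup 0$ does not suffice, since operator-strong convergence is only pointwise while $u_n$ varies. The resolution is to move the translation onto the dual test family $(f_\alpha)\in\prod L^{p'}$ and invoke $L^{p'}$-translation continuity, and this is exactly where the hypothesis $p>1$ (so that $p'<\infty$) is essential.
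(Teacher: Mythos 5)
Your proof is correct and follows essentially the same route as the paper: condition (a) via \cref{lemma;Sobolev dsl weak conv} and compactness of the bounded parameters $(y_n,j_n)$, and condition (b) by representing functionals in $(\Wdmp)^*$ through $L^{p'}$ families (the paper's \cref{lemma;dual Wdmp}) together with continuity of translation in $L^{p'}$, which is precisely the adjoint criterion of \cref{rem;DslSp}(ii) that the paper invokes. The only cosmetic difference is that you factor out the fixed isometry $g[y,j;\ppm]$ and verify $g_n u_n \rightharpoonup 0$ directly by a pairing computation, whereas the paper proves operator-strong convergence of the adjoints $(g[y_n,j_n;\ppm])^*$; as you yourself note, these amount to the same computation.
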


\begin{proof}
We shall show that $G[\RN \!,\Z;\ppm]$ is a dislocation group.
For a sequence $(g[y_n,j_n;\ppm])$ and $g[y,j;\ppm]$ in $G[\RN \!,\Z;\ppm]$, 
it is easily checked that $g[y_n,j_n;\ppm] \to g[y,j;\ppm]$ operator-strongly if and only if $|j_n-j|+2^j |y_n-y| \to 0$ as $n\toinfty$, and thus, one sees that $g[y_n,j_n;\ppm] \to g[y,j;\ppm]$ operator-strongly if and only if $g[y_n,j_n;\ppm]^{-1} \to g[y,j;\ppm]^{-1}$ operator-strongly. 
Let $(g[y_n,j_n;\ppm])$ be a bounded sequence in $G[\RN \!,\Z;\ppm]$ 
such that $g[y_n,j_n;\ppm] \not\rightharpoonup 0$,  which equivalently 
means that 
$(y_n)$ and $(j_n)$ are both bounded  
due to \cref{lemma;Sobolev dsl weak conv}.
Then there exist a subsequence, still denoted by $n$, $y\in\RN$ and $j\in\Z$ 
such that $y_n \to y, \  j_n =j \  (n\toinfty)$.
Hence it follows that $g[y_n,j_n;\ppm] \to g[y,j;\ppm]$ operator-strongly. 
Finally, we shall show that 
$(g[y_n,j_n;\ppm])^* \to (g[y,j;\ppm])^*$ operator-strongly 
(recall (ii) of \cref{rem;DslSp}).
Indeed, let $u\in\Wdmp$ and let $\phi\in [ \Wdmp ]^*$.
Take functions $\phi_{\alpha}\in L^{p'}(\RN), \ |\alpha|= m$, 
given by \cref{lemma;dual Wdmp}.
It follows from the change of  variables and the H\"older inequality that 
\begin{align*}
&\norm{ (g[y_n,j_n;\ppm])^*\phi-(g[y,j;\ppm])^*\phi }_{[\Wdmp]^*}  \\
&=
\sup_{\substack{ u\in\Wdmp \\ \norm{u}_{\Wdrn}=1 } }
 \qty|\la  (g[y_n,j_n;\ppm])^*\phi-(g[y,j;\ppm])^*\phi, u\ra | \\
&=
\sup_{\substack{ u\in\Wdmp \\ \norm{u}_{\Wdrn}=1 } }
 \qty|\la  \phi, g[y_n,j_n;\ppm]u-g[y,j;\ppm]u\ra | \\
&=
\sup_{\substack{ u\in\Wdmp \\ \norm{u}_{\Wdrn}=1 } }
\qty|\sum_{|\alpha|=m}
\int_{\RN} \phi_\alpha \d^\alpha \qty(g[y_n,j_n;\ppm]u-g[y,j;\ppm]u) \,\dd x| \\
&=
\sup_{\substack{ u\in\Wdmp \\ \norm{u}_{\Wdrn}=1 } }
\Biggl|\sum_{|\alpha|=m}
\int_{\RN} \bigl( 2^{-j_n\frac{N}{p'}}\phi_\alpha(2^{-j_n}x+y_n) 
- 2^{-j\frac{N}{p'}}\phi_\alpha(2^{-j}x+y) \bigr) \d^\alpha u \,\dd x\Biggr| \\
&=
\sup_{\substack{ u\in\Wdmp\\ \norm{u}_{\Wdrn}=1 } }
\qty|\sum_{|\alpha|=m} \int_{\RN} \qty(
g[y_n, j_n; p']^{-1} \phi_{\alpha} - g[y, j; p']^{-1} \phi_{\alpha}
) \d^\alpha u \,\dd x| \\
&\le 
\sup_{\substack{ u\in\Wdmp\\ \norm{u}_{\Wdrn}=1 } }
\qty( \sum_{|\alpha|=m} 
\norm{
g[y_n, j_n; p']^{-1} \phi_{\alpha} 
- g[y, j; p']^{-1} \phi_{\alpha}
}_{L^{p'}(\RN)} ) 
\|u\|_{\Wdmp}  \\
&= 
\sum_{|\alpha|=m} 
\norm{ g[y_n, j_n; p']^{-1} \phi_{\alpha} - g[y, j; p']^{-1} \phi_{\alpha}
}_{L^{p'}(\RN)}.
\end{align*}
The last term is convergent to zero as $n\toinfty$  
since $y_n \to y, \ j_n \to j$ implies that 
$\| g[y_n, j_n; p']^{-1} u - g[y, j; p']^{-1} u \|_{L^{p'}(\RN)} \to 0 $ for all $u \in L^{p'}(\RN)$. 
Hence one gets 
$(g[y_n,j_n;\ppm])^*\phi-(g[y,j;\ppm])^*\phi \to 0$ strongly in $[\Wdmp]^*$, whence follows $(g[y_n,j_n;\ppm])^* \to (g[y,j;\ppm])^* $ operator-strongly.
This completes the proof.
\end{proof}

Notice that the homogeneous Sobolev space $\Wdmp$ with the above dislocation group $G[\RN,\Z;\ppm]$ forms a dislocation space irrelevant to the equivalent Sobolev norms. 
We finally exhibit an example of $G$-completely continuous embedding of the homogeneous Sobolev space into the critical Lebesgue space.
The definition of homogeneous Besov spaces will be described in \cref{section:preliminary} below for the sake of the reader's convenience.

\begin{lemma}\label{lemma;G-complete continuity of Wdmp}
Let $\qty(\Wdmp,G[\RN \!,\Z;\ppm])$ be as in \cref{prop;Wdmp dislocation space}.
Then the continuous embedding 
$\Wdmp \hookrightarrow L^{\ppm}(\RN)$
is $G[\RN \!,  \Z;\ppm]$-completely continuous.
Indeed, it holds that 
\begin{equation*}
\|u\|_{L^{\ppm}(\RN)} \le C \|u\|_{\Wdmp}^{p/\ppm}
\|u\|_{\dot{B}^{-N/\ppm}_{\infty,\infty}(\RN)}^{1-p/\ppm}
\end{equation*}
for all $u\in\Wdmp$,  
and that for a sequence $(u_n)$,  
\begin{equation*}
\|u_n\|_{\dot{B}^{-N/\ppm}_{\infty,\infty}(\RN)} \to 0
\end{equation*}
whenever $u_n \to 0$ $G[\RN \!,\Z;\ppm]$-weakly 
in $\Wdmp$.
\end{lemma}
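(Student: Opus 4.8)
The plan is to deduce the asserted $G[\RN \!,\Z;\ppm]$-complete continuity of $\Wdmp \hookrightarrow L^{\ppm}(\RN)$ from the two displayed estimates, so I would establish those first and then assemble them. The starting observation is that a $G$-weakly null sequence is in particular weakly null in $\Wdmp$ (take the group element to be the identity in the definition of $G$-weak convergence in \cref{def;DslSp}), hence bounded by the uniform boundedness principle. Consequently, writing $v_n \coloneqq u_n - u$ whenever $u_n \to u$ $G$-weakly, the sequence $(v_n)$ is $G$-weakly null and bounded, so once the interpolation inequality and the relation $\|v_n\|_{\dot{B}^{-N/\ppm}_{\infty,\infty}(\RN)} \to 0$ are known, one gets $\|u_n - u\|_{L^{\ppm}(\RN)} = \|v_n\|_{L^{\ppm}(\RN)} \le C\|v_n\|_{\Wdmp}^{p/\ppm}\|v_n\|_{\dot{B}^{-N/\ppm}_{\infty,\infty}(\RN)}^{1-p/\ppm} \to 0$, which is the required strong convergence in $L^{\ppm}(\RN)$.

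For the interpolation inequality I would argue by a Littlewood--Paley decomposition $u = \sum_{k\in\Z}\Delta_k u$ using a radial profile $\psi$; this is a refined Sobolev inequality of G\'erard--Meyer--Oru type, and the scaling exponents match because $N/\ppm = N/p - m$, so that all three norms share the same homogeneity. The argument splits the frequencies at a dyadic threshold $2^J$: the low-frequency part is bounded in $L^\infty$ by $\sum_{k<J}\|\Delta_k u\|_{L^\infty} \lesssim 2^{JN/\ppm}\|u\|_{\dot{B}^{-N/\ppm}_{\infty,\infty}(\RN)}$ via the definition of the negative Besov norm, while the high-frequency part is controlled in $L^p$ by the dyadic Sobolev bound $\|\Delta_k u\|_{L^p} \lesssim 2^{-km}\|u\|_{\Wdmp}$. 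Estimating the distribution function of $|u|$ at each level and optimizing $J$ against the level then produces the stated bound; alternatively one may cite it from the literature on homogeneous Besov spaces.

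The heart of the matter, and the step I expect to be the main obstacle, is showing that $G$-weak convergence forces the negative Besov norm to vanish, the key point being that this Besov norm is itself a supremum of dual pairings along the dislocation group. Let $\Delta_k u = \check{\psi}_k * u$ with $\check{\psi}_k(z) = 2^{kN}\check{\psi}(2^k z)$, where $\widehat{\psi}$ is supported in an annulus away from the origin. Since $\psi$ vanishes near $\xi = 0$, the kernel $\check{\psi}$ has vanishing moments of all orders, so I can write $\check{\psi} = \sum_{|\alpha|=m}\d^\alpha \Theta_\alpha$ with Schwartz functions $\Theta_\alpha$ (solving this on the Fourier side, where $\psi(\xi) = \sum_{|\alpha|=m}(i\xi)^\alpha\widehat{\Theta_\alpha}(\xi)$ with each $\widehat{\Theta_\alpha}$ smooth and supported away from $0$). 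By the dual characterization in \cref{lemma;dual Wdmp}, the functions $\Theta_\alpha \in L^{p'}(\RN)$ assemble into a single functional $\phi_0 \in (\Wdmp)^*$ whose pairing satisfies $\la \phi_0, v\ra = \int_{\RN}\check{\psi}\,v\dx$ on a dense class.

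Using the formula for $g[y,j;\ppm]^{-1}$ in \cref{definition;dislocation of Sobolev} together with a change of variables, I then obtain the exact identity, for every $x\in\RN$ and $k\in\Z$,
\begin{equation*}
\la \phi_0, g[x,k;\ppm]^{-1}u\ra = \int_{\RN}\check{\psi}(z)\,2^{-kN/\ppm}u(2^{-k}z + x)\dx = 2^{-kN/\ppm}\,\Delta_k u(x).
\end{equation*}
Taking the supremum over $x$ and $k$ and recalling that $\{g[x,k;\ppm] : x\in\RN,\ k\in\Z\} = G[\RN \!,\Z;\ppm]$, this yields
\begin{equation*}
\|u\|_{\dot{B}^{-N/\ppm}_{\infty,\infty}(\RN)} = \sup_{k\in\Z}\sup_{x\in\RN}2^{-kN/\ppm}|\Delta_k u(x)| \le \sup_{g\in G[\RN \!,\Z;\ppm]}|\la \phi_0, g^{-1}u\ra|.
\end{equation*}
Applying the definition of $G$-weak convergence with the single test functional $\phi = \phi_0$, the right-hand side tends to $0$ along any $G$-weakly null sequence, whence $\|u_n\|_{\dot{B}^{-N/\ppm}_{\infty,\infty}(\RN)}\to 0$. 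The technical care lies precisely in verifying that $\check{\psi}$ is represented by a genuine element of the homogeneous dual $(\Wdmp)^*$ and that the normalization $2^{jN/\ppm}$ of the dislocations is exactly the one reproducing the critically scaled Littlewood--Paley coefficient; once this matching is secured, combining the two estimates as in the first paragraph completes the proof.
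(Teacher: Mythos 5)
Your proposal is correct, but it takes a genuinely different route from the paper for the simple reason that the paper offers no argument at all: its entire proof of this lemma is the citation to \cite[Theorems~1.1.9 and~3.2.1]{T-01}. You have in effect reconstructed the content of those cited theorems. The crucial, dislocation-specific half of your argument --- that $G[\RN\!,\Z;\ppm]$-weak convergence forces $\|u_n\|_{\dot{B}^{-N/\ppm}_{\infty,\infty}(\RN)}\to 0$ --- is sound, and it is exactly the mechanism underlying Tintarev's cocompactness theorem: since $\widehat{\psi}$ vanishes near the origin one may write $\check{\psi}=\sum_{|\alpha|=m}\d^\alpha\Theta_\alpha$ with $\Theta_\alpha$ Schwartz, so that $v\mapsto\sum_{|\alpha|=m}(-1)^{m}\int_{\RN}\Theta_\alpha\,\d^\alpha v\dx$ is bounded on $\Wdmp$ by H\"older (this is the easy converse direction of \cref{lemma;dual Wdmp}) and agrees with $v\mapsto\int_{\RN}\check{\psi}\,v\dx$ on the dense class $C_c^\infty(\RN)$; your normalization check is also right, since the factor $2^{-kN/\ppm}$ in $g[x,k;\ppm]^{-1}$ is precisely the weight $2^{ks}$, $s=-N/\ppm$, appearing in the Besov norm, giving $\|u\|_{\dot{B}^{-N/\ppm}_{\infty,\infty}(\RN)}\le\sup_{g\in G}|\la\phi_0,g^{-1}u\ra|$ (with $\psi$ radial, so $\check{\psi}$ is even and the change of variables really produces $\Delta_k u(x)$). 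Combined with the reduction to $v_n=u_n-u$, which is $G$-weakly null, hence weakly null and bounded by uniform boundedness, this yields complete continuity once the interpolation inequality is in hand. What your route buys is self-containedness at the one point where the dislocation group genuinely enters; what the paper's citation buys is not having to develop the refined Sobolev inequality.

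One caution about your sketch of that interpolation inequality. The ``distribution function plus Chebyshev'' scheme you describe (choose $J=J_\lambda$ so that the low-frequency sum is below $\lambda/2$, then apply Chebyshev in $L^p$ to the high-frequency tail) does \emph{not} give the strong $L^{\ppm}$ bound: the exponent bookkeeping, using $\ppm-p=mp\,\ppm/N$, leaves the integrand $\lambda^{-1}$, which is logarithmically divergent, so this argument only yields the weak-type estimate $\sup_{\lambda>0}\lambda^{\ppm}|\{|u|>\lambda\}|\le C\|u\|_{\Wdmp}^{p}\|u\|_{\dot{B}^{-N/\ppm}_{\infty,\infty}(\RN)}^{\ppm-p}$. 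Upgrading to the strong norm requires an extra device, e.g.\ the pointwise optimization $|u(x)|\le C\bigl(\sup_k 2^{km}|\Delta_k u(x)|\bigr)^{p/\ppm}\|u\|_{\dot{B}^{-N/\ppm}_{\infty,\infty}(\RN)}^{1-p/\ppm}$ followed by the Triebel--Lizorkin bound $\|\sup_k 2^{km}|\Delta_k u|\|_{L^p(\RN)}\le C\|u\|_{\Wdmp}$, or an $\eps$-room H\"older/Fubini argument in the dyadic sum. Since you explicitly allow citing this inequality from the literature --- which is all the paper itself does --- this is a repairable blemish in a subsidiary step, not a gap in your main argument.
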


\begin{proof}
See~\cite[Theorems~1.1.9, and~3.2.1]{T-01}.
\end{proof}

This lemma is generalized as follows: 

\begin{lemma}\label{lemma;G-complete continuity of Wdmp2}
Let $\qty( \Wdmp,G[\RN \!,\Z;\ppm])$ be as in \cref{prop;Wdmp dislocation space}, and let $k\in\NN$ with $k<m$ (yielding $p^*_{m-k}<\ppm$). 
Then the continuous embedding 
$\Wdmp \hookrightarrow W^{k,p^*_{m-k}}(\RN)$ 
is $G[\RN \!,  \Z;\ppm]$-completely continuous.
Moreover, it holds that 
\begin{equation*}
\|u\|_{\dot{W}^{k,p^*_{m-k}}(\RN)} \le C \|u\|_{\Wdmp}^{p/p^*_{m-k}}
\|u\|_{\dot{B}^{-N/p^*_{m-k}}_{\infty,\infty}(\RN)}^{1-p/p^*_{m-k}}
\end{equation*}
for all $u\in\Wdmp$,  
and that for a sequence $(u_n)$,  
\begin{equation*}
\|u_n\|_{\dot{B}^{-N/p^*_{m-k}}_{\infty,\infty}(\RN)} \to 0
\end{equation*}
whenever $u_n \to 0$ $G[\RN \!,\Z;\ppm]$-weakly 
in $\Wdmp$.
\end{lemma}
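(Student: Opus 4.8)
The plan is to reduce the assertion to the base case \cref{lemma;G-complete continuity of Wdmp} by differentiating, exploiting that taking $k$ derivatives carries $\Wdmp$ into $\dot{W}^{m-k,p}(\RN)$, a space to which \cref{lemma;G-complete continuity of Wdmp} applies verbatim with $m$ replaced by $m-k$ (this is legitimate: $k<m$ forces $1\le m-k<N$, and $p<N/m\le N/(m-k)$). First I would record that for each multi-index $\alpha$ with $|\alpha|=k$ one has $\d^\alpha u\in\dot{W}^{m-k,p}(\RN)$ together with the estimate $\|\d^\alpha u\|_{\dot{W}^{m-k,p}(\RN)}\le\|u\|_{\Wdmp}$: indeed, for $|\beta|=m-k$ one has $\d^\beta(\d^\alpha u)=\d^{\alpha+\beta}u$ with $|\alpha+\beta|=m$, and as $\beta$ runs over $|\beta|=m-k$ the indices $\alpha+\beta$ are distinct and all appear in the sum defining $\|u\|_{\Wdmp}^p$; membership in the $C_c^\infty$-closure follows by approximating $u$ in $\Wdmp$ by $C_c^\infty(\RN)$-functions and differentiating.

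Next I would apply \cref{lemma;G-complete continuity of Wdmp} to each $v_\alpha\coloneqq\d^\alpha u$, which yields
\[
\|\d^\alpha u\|_{L^{p^*_{m-k}}(\RN)}\le C\,\|\d^\alpha u\|_{\dot{W}^{m-k,p}(\RN)}^{p/p^*_{m-k}}\,\|\d^\alpha u\|_{\dot{B}^{-N/p^*_{m-k}}_{\infty,\infty}(\RN)}^{1-p/p^*_{m-k}}.
\]
Raising to the power $p^*_{m-k}$, summing over the finitely many $\alpha$ with $|\alpha|=k$, and inserting the bound of the previous step produces the $\dot{W}^{k,p^*_{m-k}}$-seminorm on the left and $\|u\|_{\Wdmp}^{p/p^*_{m-k}}$ on the right. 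It remains to transfer the Besov factor from $\d^\alpha u$ back to $u$: here I would invoke the boundedness of $\d^\alpha\colon\dot{B}^{s+k}_{\infty,\infty}(\RN)\to\dot{B}^{s}_{\infty,\infty}(\RN)$ together with the arithmetic identity $-N/p^*_{m-k}+k=-N/p^*_m$, which gives $\|\d^\alpha u\|_{\dot{B}^{-N/p^*_{m-k}}_{\infty,\infty}(\RN)}\le C\,\|u\|_{\dot{B}^{-N/p^*_m}_{\infty,\infty}(\RN)}$ and is precisely what makes every exponent scale-invariant and reduces the general $k$ to the case $k=0$.

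Finally, the $G[\RN,\Z;\ppm]$-complete continuity would follow formally from the resulting interpolation estimate. If $u_n\to 0$ $G[\RN,\Z;\ppm]$-weakly in $\Wdmp$, then $(u_n)$ is bounded in $\Wdmp$ (choosing $g=\mathrm{Id}=g[0,0;\ppm]$ in the definition recovers ordinary weak convergence, hence boundedness), while \cref{lemma;G-complete continuity of Wdmp} gives $\|u_n\|_{\dot{B}^{-N/p^*_m}_{\infty,\infty}(\RN)}\to 0$; substituting both into the estimate forces $\|u_n\|_{\dot{W}^{k,p^*_{m-k}}(\RN)}\to 0$.

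I expect the only genuinely delicate point to be the Besov bookkeeping of the second paragraph—tracking the homogeneity index through the $k$ derivatives and verifying $-N/p^*_{m-k}+k=-N/p^*_m$ so that the $k$-dependence in the smoothness index cancels exactly; everything else is a routine reduction to \cref{lemma;G-complete continuity of Wdmp}. Note that the estimate controls the homogeneous seminorm $\dot{W}^{k,p^*_{m-k}}$, i.e.\ the top-order derivatives, which is the scale-invariant content of the embedding.
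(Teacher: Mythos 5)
Your proof is correct, but it does not follow the paper's route: the paper disposes of this lemma by citation alone (``See \cite[Corollary~3.2.2]{T-01}''), whereas you reconstruct the proof of that corollary from the base case, applying \cref{lemma;G-complete continuity of Wdmp} (with $m$ replaced by $m-k$) to each $\d^\alpha u\in\dot{W}^{m-k,p}(\RN)$, $|\alpha|=k$, and then transferring the Besov factor back to $u$ via Bernstein's inequality. All the individual steps check out: the bound $\|\d^\alpha u\|_{\dot{W}^{m-k,p}(\RN)}\le\|u\|_{\Wdmp}$, the admissibility of the parameters ($1\le m-k<N$ and $p<N/m\le N/(m-k)$), the arithmetic $-N/p^*_{m-k}+k=-N/\ppm$, and the passage from the interpolation inequality to $G$-complete continuity using that $G$-weakly null sequences are bounded and that \cref{lemma;G-complete continuity of Wdmp} kills the Besov factor. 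What your argument buys over the paper's citation is a self-contained reduction that makes visible exactly which exponents must cancel.

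One substantive point deserves emphasis: your reduction produces the factor $\|u\|_{\dot{B}^{-N/\ppm}_{\infty,\infty}(\RN)}$, while the lemma as printed carries $\|u\|_{\dot{B}^{-N/p^*_{m-k}}_{\infty,\infty}(\RN)}$, and for $k\ge1$ these genuinely differ. Your version is the only tenable one: testing the printed inequality on $u_\lambda=\lambda^{N/\ppm}u(\lambda\cdot)$ with $\lambda=2^i\toinfty$, the quantities $\|u_\lambda\|_{\dot{W}^{k,p^*_{m-k}}(\RN)}$ and $\|u_\lambda\|_{\Wdmp}$ are scale-invariant, whereas $\|u_\lambda\|_{\dot{B}^{-N/p^*_{m-k}}_{\infty,\infty}(\RN)}=\lambda^{N/\ppm-N/p^*_{m-k}}\|u\|_{\dot{B}^{-N/p^*_{m-k}}_{\infty,\infty}(\RN)}=\lambda^{-k}\|u\|_{\dot{B}^{-N/p^*_{m-k}}_{\infty,\infty}(\RN)}\to0$, so the printed form fails on every nonzero $u$. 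Thus the mismatch is a defect of the statement (the scale-invariant exponent is $-N/\ppm$ on $u$, equivalently $-N/p^*_{m-k}$ on the derivatives $\d^\alpha u$), not a gap in your argument, and what you prove is precisely what the paper actually uses downstream, namely the vanishing~\eqref{residue vanish Wdmp2}. Your closing caveat is likewise apt: the estimate, and indeed the embedding itself, concerns the homogeneous space $\dot{W}^{k,p^*_{m-k}}(\RN)$ rather than the inhomogeneous $W^{k,p^*_{m-k}}(\RN)$ written in the statement, since lower-order derivatives of a general element of $\Wdmp$ need not lie in $L^{p^*_{m-k}}(\RN)$.
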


\begin{proof}
See~\cite[Corollary~3.2.2]{T-01}.
\end{proof}

\subsection{Main theorem}
Now we move on to a fundamental theorem of profile decomposition in the \emph{homogeneous} Sobolev space $\Wdmp$. 
Namely, every bounded sequence in $\Wdmp$ has a fine profile decomposition in the sense that  the residual term is vanishing in $\dot{W}^{k,p^*_{m-k}}(\RN)$.

\begin{theorem}\label{theorem;profile-decomp. in Wdmp}
Let $m,N \in \N$, let $1 < p< N/m$ and let $(u_n)$ be a bounded sequence in $\Wdmp$.
Then there exist $\Lambda \in \N\cup\{0,+\infty\}$, 
a subsequence $(N(n)) \preceq (n) $,
$\bbl{w}{l} \in \Wdmp \ (l \in \NN^{<\Lambda +1})$,  
$(\dsl{y}{l}{N(n)},\dsl{j}{l}{N(n)} ) \in \RN\times\Z \ (l \in \NN^{<\Lambda +1}, \ n \in \Z_{ \ge l})$, 
and  residual terms 
$r^L_{N(n)}\in\Wdmp$ $(L\in\NN^{<\Lambda+1}, \ n\in\Z_{\ge L})$
with the relation of a double-suffix profile decomposition 
\begin{equation}\notag 
    u_{N(n)} = \sum_{l=0}^L 
    2^{\dsl{j}{l}{N(n)}N/\ppm} \bbl{w}{l} \qty(2^{\dsl{j}{l}{N(n)}} (\cdot - \dsl{y}{l}{N(n)}))+r^L_{N(n)} , 
    \quad L \in \NN^{<\Lambda+1}, \ n \in \Z_{\ge L}, 
\end{equation}
such that the following hold:
\begin{enumerate}
\setlength{\itemsep}{0mm}
\setlength{\parskip}{0mm}

    \item $\dsl{y}{0}{N(n)}=0, \s \dsl{j}{0}{N(n)} =0 \ (n \ge 0), \quad \bbl{w}{l}\neq 0 \ (1 \le l \in \NN^{<\Lambda +1})$.
    \item For each $l \neq  k \in \NN^{<\Lambda +1}$, it holds that 
    \begin{equation}\label{202106240010}
    |\dsl{j}{l}{N(n)}-\dsl{j}{k}{N(n)}|+
    2^{\dsl{j}{k}{N(n)}}
    |\dsl{y}{l}{N(n)}-\dsl{y}{k}{N(n)}|
    \toinfty 
    \end{equation}
    as $n \toinfty$.  
    Furthermore, one can assume without loss of generality that, for $l\neq k\in\NN^{<\Lambda+1}$, one and only one of the following three cases occurs: 
    \begin{enumerate}
        \item $\dsl{j}{l}{N(n)} - \dsl{j}{k}{N(n)} \to +\infty$ $(n\toinfty);$ 
        \item $\dsl{j}{l}{N(n)} - \dsl{j}{k}{N(n)} \to -\infty$ $(n\toinfty);$ 
        \item $(\dsl{j}{l}{N(n)} - \dsl{j}{k}{N(n)}) $ is convergent.  
    \end{enumerate}
    One can also assume without loss of generality that, for $l\neq k\in\NN^{<\Lambda+1}$, one and only one of the following two cases occurs: 
    \begin{enumerate}
    \setcounter{enumii}{3}
        \item $2^{\dsl{j}{k}{N(n)}} |\dsl{y}{l}{N(n)}-\dsl{y}{k}{N(n)}| \to\infty \ (n\to\infty);$ 
        \item $(2^{\dsl{j}{k}{N(n)}} (\dsl{y}{l}{N(n)}-\dsl{y}{k}{N(n)}))$ is convergent.  
    \end{enumerate}
    But the cases $(c)$ and $(e)$ do not occur simultaneously due to~\eqref{202106240010}.
    \item 
    $2^{-\dsl{j}{l}{N(n)}N/\ppm} u_{N(n)} \qty( 2^{-\dsl{j}{l}{N(n)}} \cdot + \dsl{y}{l}{N(n)}) \to \bbl{w}{l}$ as $n\toinfty$ 
    weakly in  $\Wdmp$ and a.e.\  on $\RN$ $(l \in \NN^{<\Lambda +1})$. 
    \item For $k\in\NN^{<\Lambda+1}$,  
    \begin{equation*}
    2^{-\dsl{j}{k}{N(n)} N/\ppm } r^{L}_{N(n)} \qty(2^{-\dsl{j}{k}{N(n)}}\cdot + \dsl{y}{k}{N(n)}) \to 
    \begin{cases}
    0 & \mbox{if}\ k=0,\ldots,L, \\
    \bbl{w}{k} & \mbox{if}\ k\ge L+1,
    \end{cases}
    \end{equation*}
    weakly in $\Wdrn$ and a.e.\ on $\RN$ as $n\toinfty$. 
\end{enumerate} 
Moreover, the following energy relations hold:
\begin{align}
&\label{eq;energy estim Wdmp}
\varlimsup_{n\toinfty}\|u_{N(n)} \|_{\Wdmp}^p \\ 
&\notag \qquad \ge \sum_{l=0}^\Lambda \|\bbl{w}{l}\|_{\Wdmp}^p 
+ \varlimsup_{L\to \Lambda} \varlimsup_{R\toinfty} \varlimsup_{n\toinfty}
\| r^L_{N(n)} \|^p_{\dot{W}^{m,p}(\RN \setminus \Bnrl )  },   
\\
&\lim_{L\to \Lambda}\sup_{\phi\in U}\varlimsup_{n\toinfty}\sup_{y\in\RN \!, \,  j\in\Z}
\qty|\la \phi, 2^{-jN/\ppm} r^L_{N(n)}(2^{-j} \cdot + y) \ra |=0, \label{eq;Ishiwata condition Wdmp}  \\ 
&\varlimsup_{L\to \Lambda}\varlimsup_{n\toinfty}\|r^{L}_{N(n)}\|_{\Wdmp} \le 2 \varlimsup_{n\toinfty}\|u_{N(n)} \|_{\Wdmp}<+ \infty, \label{eq;residue bounded Wdmp}\\
&    \lim_{L\to \Lambda}\varlimsup_{n \toinfty} \|r^L_{N(n)} \|_{L^{\ppm}(\RN)}=0, \label{residue vanish Wdmp} \\
&    \lim_{L\to \Lambda}\varlimsup_{n \toinfty} \|r^L_{N(n)} \|_{\dot{W}^{k,p^*_{m-k}}(\RN)}=0,\quad k\in\NN, \ k<m, \label{residue vanish Wdmp2} 
\end{align}
where $U \coloneqq B_{[\Wdmp]^*}(1)$ and  $\Bnrl \coloneqq \bigcup_{l=0}^L 
B( \dsl{y}{l}{N(n)},  2^{-\dsl{j}{l}{N(n)}} R).$
\end{theorem}

\begin{remark} 
\begin{enumerate}
\item 
The meaning of each assertion above is as follows:  
\eqref{eq;energy estim Wdmp} indicates that the sum of ($p$-powered) Sobolev norms of all profiles is bounded by  
$\varlimsup_{n\toinfty} \|u_{N(n)} \|_{\Wmp}$, 
which is, so to speak, an \emph{energy estimate} or an \emph{energy decomposition}; \eqref{eq;Ishiwata condition Wdmp} implies the completion of performing the profile decomposition and called the exactness condition; \eqref{eq;residue bounded Wdmp} ensures that the residual term does not diverge as the number of subtracted dislocated profiles increases; \eqref{residue vanish Wdmp} and~\eqref{residue vanish Wdmp2} show that lower order derivatives of the residual term are vanishing strongly in appropriate Lebesgue or Sobolev spaces.

\item When one considers profile decomposition in Sobolev spaces $\dot{W}^{m,2}(\RN)$, which is a Hilbert space, one should employ the profile decomposition theorem in Hilbert spaces described in~\cite[Theorem~2.9]{Okumura2} rather than the above theorem, since it provides a more precise energy decomposition for the Sobolev norm, i.e., one obtains 
\begin{equation*}
    \varlimsup_{n\toinfty} \|u_{N(n)}\|_{\dot{W}^{m,2}(\RN)}^2
    = \sum_{l =0}^\Lambda \|\bbl{w}{l}\|_{\dot{W}^{m,2}(\RN)}^2+ 
    \lim_{L\to \Lambda} \varlimsup_{n\toinfty} \|r^{L}_{N(n)}\|_{\dot{W}^{m,2}(\RN)}^2. 
\end{equation*}

\item 
Regarding the energy estimate~\eqref{eq;energy estim Wdmp}, it is noteworthy that this is a sharper version than other authors' energy decompositions because other types of one do not include the ``residual part'' as in~\eqref{eq;energy estim Wdmp}.

\item 
Relations~\eqref{eq;Ishiwata condition Wdmp}--\eqref{residue vanish Wdmp2} are also obtained in the previous researches, e.g., \cite{Gerard,Jaffard,Solimini,T-01,T-K}. 

\end{enumerate}
\end{remark}


Once the number of nontrivial profiles turns out to be finite, then the assertions above become simpler. 

\begin{corollary}
Suppose that the same conditions as in \cref{theorem;profile-decomp. in Wdmp} are satisfied. 
In addition, assume that $\Lambda$ is finite, i.e., 
the number of nontrivial profiles is finite. 
Then regarding the final residual term given by 
\begin{equation*}
r^\Lambda_{N(n)} = u_{N(n)} -\sum_{l=0}^\Lambda 
    2^{\dsl{j}{l}{N(n)}N/\ppm} \bbl{w}{l} \qty(2^{\dsl{j}{l}{N(n)}} (\cdot - \dsl{y}{l}{N(n)})), 
    \quad n \ge \Lambda, 
\end{equation*}
the relations~\eqref{eq;Ishiwata condition Wdmp}--\eqref{residue vanish Wdmp} turn to 
\begin{align}
&\notag 
\varlimsup_{n\toinfty}\|u_{N(n)} \|_{\Wdmp}^p \\ 
& \notag \qquad 
\ge \sum_{l=0}^\Lambda \|\bbl{w}{l}\|_{\Wdmp}^p 
    +  \varlimsup_{R\toinfty} \varlimsup_{n\toinfty}
    \| r^\Lambda_{N(n)} \|^p_{\dot{W}^{m,p}(\RN \setminus \mathcal{B}_{n,R,\Lambda} )  },   
   \\
&\lim_{n\toinfty}\sup_{y\in\RN \!, \, j\in\Z}
    \qty|\la \phi, 2^{-jN/\ppm} r^\Lambda_{N(n)}(2^{-j} \cdot + y) \ra |=0,
\quad  \phi \in U, \label{residue G weak conv Wdmp}\\
&\varlimsup_{n\toinfty}\|r^\Lambda_{N(n)}\|_{\Wdmp}
\le 2 \varlimsup_{n\toinfty} \|u_{N(n)}\|_{\Wdmp}, \notag \\
&\lim_{n\toinfty}\|r^\Lambda_{N(n)}\|_{L^{\ppm}(\RN)}=0, \notag \\ 
&\lim_{n\toinfty}\|r^\Lambda_{N(n)}\|_{\dot{W}^{k,p^*_{m-k}}(\RN)}=0,\quad k\in\NN, \ k<m, \notag 
\end{align}
where $U \coloneqq B_{[\Wdmp]^*}(1)$ and $\mathcal{B}_{n,R,\Lambda} \coloneqq  \bigcup_{l=0}^\Lambda B( \dsl{y}{l}{N(n)},  2^{-\dsl{j}{l}{N(n)}} R)$.
In particular, the relation~\eqref{residue G weak conv Wdmp} implies that the final residual term is $G[\RN \!, \Z;\ppm]$-weakly convergent to zero in $\Wdmp$. 
\end{corollary}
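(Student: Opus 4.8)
The plan is to deduce the corollary directly from \cref{theorem;profile-decomp. in Wdmp} by specializing its double-limit relations to the finite case $\Lambda<\infty$. The decisive device is the convention recorded in item~(iii) of the notation: when $\Lambda<\infty$ the index set $\NN^{<\Lambda+1}=\{0,\dots,\Lambda\}$ is finite and terminates at $L=\Lambda$, so for any real sequence $(a_L)_L$ one has $\lim_{L\to\Lambda}a_L=a_\Lambda$, and likewise $\varlimsup_{L\to\Lambda}a_L=a_\Lambda$. I would first invoke the theorem to fix $\Lambda$, the subsequence $(N(n))$, the profiles $\bbl{w}{l}$ and the dislocation parameters, and then process the five displayed relations one at a time, in each case replacing the operation in the variable $L$ by evaluation at $L=\Lambda$.

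For the energy estimate~\eqref{eq;energy estim Wdmp}, replacing $\varlimsup_{L\to\Lambda}$ by its value at $L=\Lambda$ (so that $\Bnrl$ becomes $\mathcal{B}_{n,R,\Lambda}$) gives the first displayed inequality of the corollary. The bound~\eqref{eq;residue bounded Wdmp} collapses in the same way to $\varlimsup_{n\toinfty}\|r^\Lambda_{N(n)}\|_{\Wdmp}\le 2\varlimsup_{n\toinfty}\|u_{N(n)}\|_{\Wdmp}$. The vanishing relations~\eqref{residue vanish Wdmp} and~\eqref{residue vanish Wdmp2} become $\varlimsup_{n\toinfty}$ of a nonnegative quantity equal to zero; since a nonnegative superior limit that vanishes forces the ordinary limit to vanish, these read as $\lim_{n\toinfty}\|r^\Lambda_{N(n)}\|_{L^{\ppm}(\RN)}=0$ and $\lim_{n\toinfty}\|r^\Lambda_{N(n)}\|_{\dot{W}^{k,p^*_{m-k}}(\RN)}=0$.

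The only relation deserving slightly more care is the Ishiwata condition~\eqref{eq;Ishiwata condition Wdmp}. At $L=\Lambda$ it reads $\sup_{\phi\in U}\varlimsup_{n\toinfty}\sup_{y,j}|\la\phi,2^{-jN/\ppm}r^\Lambda_{N(n)}(2^{-j}\cdot+y)\ra|=0$; being a supremum of nonnegative numbers equal to zero, each term vanishes, and a vanishing nonnegative superior limit is a vanishing limit, which is exactly~\eqref{residue G weak conv Wdmp}. To read off $G$-weak convergence I would unwind \cref{def;DslSp}(ii): using the explicit inverse $g[y,j;\ppm]^{-1}u(\cdot)=2^{-jN/\ppm}u(2^{-j}\cdot+y)$ of \cref{definition;dislocation of Sobolev}, the quantity $\sup_{g\in G[\RN,\Z;\ppm]}|\la\phi,g^{-1}r^\Lambda_{N(n)}\ra|$ coincides with the supremum over $(y,j)$ appearing in~\eqref{residue G weak conv Wdmp}. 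Hence that relation says $\lim_{n\toinfty}\sup_{g\in G}|\la\phi,g^{-1}r^\Lambda_{N(n)}\ra|=0$ for every $\phi\in U$, and by homogeneity in $\phi$ (scaling by $1/\|\phi\|$) this passes from the unit ball $U$ to all of $(\Wdmp)^*$, which is precisely $r^\Lambda_{N(n)}\to0$ $G[\RN,\Z;\ppm]$-weakly in $\Wdmp$.

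In truth there is no serious obstacle: the corollary is a transcription of the theorem under the finiteness convention. The points that merit attention are merely the bookkeeping that a vanishing nonnegative superior limit upgrades to a genuine limit, and the identification---through the explicit form of $g^{-1}$---of the supremum in~\eqref{residue G weak conv Wdmp} with the $G$-weak convergence functional, together with the trivial scaling argument carrying the statement from $U$ to the full dual space.
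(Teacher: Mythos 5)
Your proposal is correct and matches the paper's intent exactly: the paper states this corollary without any proof, treating it as the immediate specialization of \cref{theorem;profile-decomp. in Wdmp} under the convention $\lim_{L\to\Lambda}a_L=a_\Lambda$ for finite $\Lambda$, which is precisely what you carry out. Your added bookkeeping (upgrading vanishing nonnegative $\varlimsup$ to a genuine limit, identifying $\sup_{g\in G}|\la\phi,g^{-1}r^\Lambda_{N(n)}\ra|$ with the supremum over $(y,j)$ via the explicit form of $g[y,j;\ppm]^{-1}$, and rescaling from $U$ to all of $(\Wdmp)^*$) is exactly the routine detail the paper leaves implicit.
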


The profile decomposition remains true in $\Wdmp$ equipped with an equivalent norm that is often used.

\begin{corollary}\label{corollary;another homogeneous norm}
Under the same conditions as in \cref{theorem;profile-decomp. in Wdmp},  
all of assertions in the theorem hold true even when the norm 
of $\dot{W}^{1,p}(\RN)$ is replaced by 
$$
\|u\|_{\dot{W}^{1,p}(\RN)}\coloneqq 
\l( \int_{\RN} |\nabla u|^p \,\dd x \r)^{1/p}\!, \quad u\in \dot{W}^{1,p}(\RN),
$$
where $|\nabla u|=\sqrt{|\frac{\d u}{\d x_1}|^2 + \cdots + |\frac{\d u}{\d x_N}|^2}$.
\end{corollary}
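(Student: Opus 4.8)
The plan is to deduce the statement from item~(iii) of \cref{rem;DslSp} together with a direct inspection of how the norm enters the proof of \cref{theorem;profile-decomp. in Wdmp}. Write $\|u\|_* \coloneqq \l(\int_{\RN} |\nabla u|^p \dx\r)^{1/p}$ for the alternative norm, and recall that the original norm is $\|u\|_{\dot{W}^{1,p}(\RN)} = \l(\sum_{n=1}^N \|\d u/\d x_n\|_{L^p(\RN)}^p\r)^{1/p}$. First I would record that the two are equivalent: since the Euclidean and the $\ell^p$ norms on $\RN$ differ only by constants depending on $N$ and $p$, one has the pointwise bound $c\,\l(\sum_{n=1}^N |\d u/\d x_n|^p\r)^{1/p} \le |\nabla u| \le C\,\l(\sum_{n=1}^N |\d u/\d x_n|^p\r)^{1/p}$, and raising to the $p$-th power and integrating gives $c\,\|u\|_{\dot{W}^{1,p}(\RN)} \le \|u\|_* \le C\,\|u\|_{\dot{W}^{1,p}(\RN)}$ (so $\|\cdot\|_*$ is complete as well). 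In particular the two norms induce the same topology, hence the same weak topology on $\dot{W}^{1,p}(\RN)$.

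Next I would check that $G[\RN \!,\Z;\ppm]$ (here $m=1$, so that $\ppm = pN/(N-p)$) acts isometrically for $\|\cdot\|_*$ too. For $u\in\dot{W}^{1,p}(\RN)$ one has $\nabla\big(g[y,j;\ppm]u\big)(x) = 2^{jN/\ppm}\,2^{j}\,(\nabla u)(2^j(x-y))$, whence, after the change of variables $z = 2^j(x-y)$,
\eqn{
\|g[y,j;\ppm]u\|_*^p = 2^{jNp/\ppm}\,2^{jp}\,2^{-jN}\int_{\RN}|\nabla u(z)|^p\,dz = \|u\|_*^p,
}
the scaling exponents cancelling precisely because $Np/\ppm + p - N = 0$, i.e.\ because $\ppm = pN/(N-p)$. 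This is the very same mechanism that makes the original norm invariant, so no new condition on the critical exponent is required. Thus $G[\RN \!,\Z;\ppm]$ is isometric in both $\|\cdot\|_{\dot{W}^{1,p}(\RN)}$ and $\|\cdot\|_*$, and item~(iii) of \cref{rem;DslSp} yields that $\big((\dot{W}^{1,p}(\RN),\|\cdot\|_*),G[\RN \!,\Z;\ppm]\big)$ is again a dislocation space.

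It then remains to see that the conclusions of \cref{theorem;profile-decomp. in Wdmp} are insensitive to the replacement. The $G$-completely continuous embeddings of \cref{lemma;G-complete continuity of Wdmp,lemma;G-complete continuity of Wdmp2} persist, because $G$-weak convergence and strong convergence in the \emph{fixed} target spaces $L^{\ppm}(\RN)$ and $\dot{W}^{k,p^*_{m-k}}(\RN)$ are intrinsic notions, and passing to an equivalent domain norm affects only the embedding constant. Consequently the extraction of the profiles, the pairwise divergence of the dislocation parameters and the weak and a.e.\ convergences (assertions~(i)--(iv)), the uniform smallness~\eqref{eq;Ishiwata condition Wdmp}, and the vanishing of the residual~\eqref{residue vanish Wdmp}--\eqref{residue vanish Wdmp2} are word for word the same: they refer only to the dislocation structure, to the common weak topology, and to the fixed target spaces.

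The hard part, which I would treat last, is to confirm the two genuinely norm-dependent statements, namely the energy inequality~\eqref{eq;energy estim Wdmp} and the residual bound~\eqref{eq;residue bounded Wdmp}. Their proof in \cref{theorem;profile-decomp. in Wdmp} rests on separating $\RN$ into the balls $\Bnrl$ and their complement (cf.\ \cref{lemma;energy estim Wdmp,lemma;cut off induction,lemma;calc in proof of claim E}) combined with a Brezis--Lieb type decomposition. I expect this to carry over unchanged to $\|\cdot\|_*$, because that argument uses only two structural features of the norm: that it is the $L^p$ integral of a pointwise gradient magnitude---so that the restriction to $\RN\setminus\Bnrl$ and the domain separation make literal sense---and that the group acts isometrically, which is exactly what was established above. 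Since the Brezis--Lieb lemma applies to the $\RN$-valued field $\nabla u \in L^p(\RN;\RN)$ just as it does to its individual components, the same computation reproduces~\eqref{eq;energy estim Wdmp} and~\eqref{eq;residue bounded Wdmp} with $\|\cdot\|_*$ in place of $\|\cdot\|_{\dot{W}^{1,p}(\RN)}$, which would complete the proof.
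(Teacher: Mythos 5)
Your proposal is correct and takes essentially the same route as the paper, whose entire proof of this corollary is the remark that one argues exactly as in the proof of \cref{theorem;profile-decomp. in Wdmp}; your write-up simply makes explicit what "the same way" means (isometry of $G[\RN\!,\Z;\ppm]$ in the new norm via the exponent identity $Np/\ppm+p-N=0$, insensitivity of the norm-independent assertions, and re-running the convexity/cut-off computations for the vector field $\nabla u\in L^p(\RN;\RN)$). Note that the paper even anticipates the vector-valued Brezis--Lieb step you invoke: the elementary inequality in \cref{lemma;residue bounded Wdmp} is stated there for $\R^d$-valued arguments, which is precisely what is needed for $\|\cdot\|_*$.
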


\begin{proof}
The corollary can be proved in the same way as the proof of \cref{theorem;profile-decomp. in Wdmp}.
\end{proof}

\subsection{Proof of \cref{theorem;profile-decomp. in Wdmp}}
We shall employ \cref{theorem;ProDeco in X} to obtain profiles, and it remains to show the estimates for the Sobolev and Lebesgue norms. 
In what follows, for the sake of simplicity, we often denote 
$g[y,j;\ppm],  g[\dsl{y}{l}{n},\dsl{j}{l}{n};\ppm] \in G[\RN,\Z;\ppm]$ by $g, \dsl{g}{l}{n}$, respectively, for short.
%
Throughout this proof, when a sequence $(u_n)$ converges to $u$ weakly in $\Wdmp$ and almost everywhere on $\RN$, we denote it by 
\begin{equation*}
u_n \to u \quad \mbox{weakly and a.e.}
\end{equation*}
for short. 
\begin{step}[Finding profiles]
\cref{theorem;ProDeco in X} leads us to the existence of profile elements $(\bbl{w}{l}, \dsl{y}{l}{N(n)}, \dsl{j}{l}{N(n)}, \Lambda ) \in \Wdmp \times \RN \times \Z\times (\N\cup\{0,\infty\})$  $(l \in \NN^{<\Lambda+1}, \  n \in \Z_{\ge l})$ satisfying all assertions in \cref{theorem;ProDeco in X}. As in the proof of \cref{theorem;ProDeco in X} in~\cite{Okumura2}, each profile $\bbl{w}{l}$ is obtained as the weak limit of $\dsl{g}{l}{i(l,n)}^{-1} u_{i(l,n)}$, where $(i(l,n))$ denotes the $l$-th subsequence for which the $l$-th profile and the $l$-th dislocations are considered in an iterative process. However,  with the help of the Sobolev compact embeddings, the weak convergence $u_n \to u$ in $\Wdmp$ also leads us to the pointwise convergence $u_n \to u$ a.e.\  on $\RN$ up to a subsequence.
Moreover, for the $L$-th subsequence $(i(L,n))$ $(L \in \NN^{<\Lambda+1})$, 
one has 
\begin{align}
\label{202106240020}
| \dsl{j}{l}{i(L,n)} - \dsl{j}{k}{i(L,n)} | + 2^{\dsl{j}{k}{i(L,n)}} |\dsl{y}{l}{i(L,n)} - \dsl{y}{k}{i(L,n)} | \toinfty \\
\notag (n\toinfty, \ 0\le l \neq k \le L).
\end{align}

Thus, on a subsequence, still denoted by $i(L,n)$, one can always assume that,  for each $0 \le l \neq k \le L$,  one and only one of the following three conditions occurs: 
\begin{enumerate}
\setlength{\itemsep}{0mm}
\setlength{\parskip}{0mm}

    \item $\dsl{j}{l}{i(L,n)} - \dsl{j}{k}{i(L,n)} \to + \infty$ $(n\toinfty);$
    \item $\dsl{j}{l}{i(L,n)} - \dsl{j}{k}{i(L,n)} \to - \infty$ $(n\toinfty);$
    \item $(\dsl{j}{l}{i(L,n)} - \dsl{j}{k}{i(L,n)} )$ is convergent in $\Z$ as $n\toinfty$,
\end{enumerate}
and one can also assume that, for $0\le l\neq k\le L$, 
one and only one of the following two cases occurs:
\begin{enumerate}
\setlength{\itemsep}{0mm}
\setlength{\parskip}{0mm}

\setcounter{enumi}{3}
    \item $2^{\dsl{j}{k}{i(L,n)}} |\dsl{y}{l}{i(L,n)} - \dsl{y}{k}{i(L,n)} | \toinfty \ (n\to\infty);$
    \item $(2^{\dsl{j}{k}{i(L,n)}} (\dsl{y}{l}{i(L,n)} - \dsl{y}{k}{i(L,n)} ))$ is convergent in $\RN$ as $n\toinfty$. 
\end{enumerate}
But~(iii) and~(v) do not occur simultaneously due to~\eqref{202106240020}.
Hence by extracting additional subsequences in each step of the proof,  one can obtain both the weak convergence and the pointwise convergence for profiles and residual terms, and also get the above trichotomy and dichotomy regarding dislocations. 

Therefore, according to \cref{theorem;ProDeco in X} we have:   
\begin{align}
\dsl{y}{0}{N(n)}=0, \ \dsl{j}{l}{N(n)}=0 \quad  (n\ge 0), \notag 
\\ 
\notag 
\bbl{w}{l}\neq 0 \quad (1 \le l \in \NN^{<\Lambda +1}),
\\
 \label{eq;500506}
\l| \dsl{j}{l}{N(n)} - \dsl{j}{k}{N(n)} \r| + 2^{\dsl{j}{k}{N(n)}}\l| \dsl{y}{l}{N(n)} - \dsl{y}{k}{N(n)} \r| \toinfty  
\\
\notag \hspace{4cm}
(n\toinfty, \ k \neq l \in \NN^{<\Lambda +1}),  
\\
\label{202105210050} 
2^{-\dsl{j}{l}{N(n)}N/\ppm} u_{N(n)} \qty(2^{-\dsl{j}{l}{N(n)}}\cdot+\dsl{y}{l}{N(n)}) \to \bbl{w}{l}  
\\
\notag  \hspace{4cm}
\mbox{weakly and a.e.} \ 
(l \in \NN^{<\Lambda +1}). 
\end{align}
Set the residual term by 
\begin{align*}
r^{L}_{N(n)} \coloneqq u_{N(n)}-\sum_{l=0}^L 2^{\dsl{j}{l}{N(n)}N/\ppm} \bbl{w}{l} \qty(2^{\dsl{j}{l}{N(n)}} (\cdot- \dsl{y}{l}{N(n)})), \quad 
L \in \NN^{<\Lambda +1}, \ n \in \Z_{\ge L}.
\end{align*}
Then the triangle inequality yields 
\begin{equation}
 \sup_{n\ge 0} \|r^L_{N(n)} \|_{\Wdmp}
\le 
\sup_{n\ge 0} \|u_{N(n)}\|_{\Wdmp} 
+ \sum_{l=0}^L \|\bbl{w}{l}\|_{\Wdmp}.   \label{202105210070}
\end{equation}
Also, \cref{theorem;ProDeco in X} implies 
\begin{align*}
2^{-\dsl{j}{l}{N(n)}N/\ppm}r^{L}_{N(n)} \qty(2^{-\dsl{j}{l}{N(n)}} \cdot+ \dsl{y}{l}{N(n)})  \to 0
\quad  \mbox{weakly and a.e.}    \\ 
(n\to \infty, \ 0  \le l \le  L\in \NN^{<\Lambda +1}), \\
2^{-\dsl{j}{L}{N(n)}N/\ppm}r^{L-1}_{N(n)} \qty(2^{-\dsl{j}{L}{N(n)}} \cdot+ \dsl{y}{L}{N(n)})  \to \bbl{w}{L} 
\quad \mbox{weakly and a.e.}  \\
(n\to \infty, \ 1  \le L\in \NN^{<\Lambda +1}).
\end{align*}
Moreover, one can assume that, for $l \neq k \in \NN^{<\Lambda +1}$, one and only one of the following three cases holds: 
\begin{enumerate}
\setlength{\itemsep}{0mm}
\setlength{\parskip}{0mm}

    \item $\dsl{j}{l}{N(n)} - \dsl{j}{k}{N(n)} \to + \infty$ $(n\toinfty);$
    \item $\dsl{j}{l}{N(n)} - \dsl{j}{k}{N(n)} \to - \infty$ $(n\toinfty);$
    \item $\dsl{j}{l}{N(n)} - \dsl{j}{k}{N(n)}$ is convergent in $\Z$  as  $n\toinfty$, 
\end{enumerate}
and one can also assume that, for $l\neq k\in\NN^{<\Lambda+1}$, 
one and only one of the following two cases holds:
\begin{enumerate}
\setcounter{enumi}{3}
\setlength{\itemsep}{0mm}
\setlength{\parskip}{0mm}

    \item $2^{\dsl{j}{k}{N(n)}} | \dsl{y}{l}{N(n)} - \dsl{y}{k}{N(n)} |\to\infty$ $(n\to\infty);$
    \item $(2^{\dsl{j}{k}{N(n)}} ( \dsl{y}{l}{N(n)} - \dsl{y}{k}{N(n)} ))$ is convergent in $\RN$ as $n\toinfty$.
\end{enumerate}
But the cases~(iii) and~(v) do not occur simultaneously due to~\eqref{eq;500506}. 
The remaining assertions~\eqref{eq;energy estim Wdmp}--\eqref{residue vanish Wdmp}
will be proved in the following five lemmas. 
\end{step}

\begin{step}[Energy decomposition and exactness condition]
\begin{lemma}[Estimate~\eqref{eq;energy estim Wdmp} 
and the exactness condition~\eqref{eq;Ishiwata condition Wdmp}]
\label{lemma;energy estim Wdmp}
It holds that 
\begin{align}
&\label{eq;20019711}
\varlimsup_{n\toinfty}\|u_{N(n)}\|_{\Wdmp}^p 
\\
&\notag  \qquad 
\ge 
\sum_{l=0}^\Lambda \|\bbl{w}{l}\|_{\Wdmp}^p 
+ \varlimsup_{L\to \Lambda} \varlimsup_{R\toinfty} \varlimsup_{n\toinfty}
\| r^L_{N(n)} \|^p_{\dot{W}(\RN\setminus \Bnrl) },
\end{align}
where $\Bnrl \coloneqq \bigcup_{l=0}^L B(\dsl{y}{l}{N(n)}, 2^{ - \dsl{j}{l}{N(n)} } R  ). $
Furthermore, it follows that 
\begin{equation}\label{202104040040}
\lim_{L\to \Lambda}\sup_{\phi\in U}\varlimsup_{n\toinfty}\sup_{y\in \RN \!, \,  j \in \Z}
\qty|\la  \phi, 2^{-jN/\ppm}r^{L}_{N(n)} (2^{-j} \cdot+ y)  \ra |=0,  
\end{equation}
where $U \coloneqq B_{[\Wdmp]^*}(1)$. 
\end{lemma}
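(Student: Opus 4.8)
The plan is to reduce the $\Wdmp$ energy estimate to a family of $L^p$ estimates for the top-order derivatives, to separate the domain into the concentration balls $\Bnrl$ and their complement, and to bound each region from below by weak lower semicontinuity and by the spatial localization of the profiles, respectively. The exactness condition~\eqref{202104040040} will instead be read off directly from the abstract profile decomposition \cref{theorem;ProDeco in X}. The first ingredient is the scaling identity that makes the homogeneous norm tractable: since $N/\ppm+m=N/p$, for every multi-index $\alpha$ with $|\alpha|=m$ one has $\d^\alpha(\dsl{g}{l}{N(n)}\bbl{w}{l})=g[\dsl{y}{l}{N(n)},\dsl{j}{l}{N(n)};p]\,\d^\alpha\bbl{w}{l}$ and, equivalently, $\d^\alpha(\dsl{g}{l}{N(n)}^{-1}u_{N(n)})=g[\dsl{y}{l}{N(n)},\dsl{j}{l}{N(n)};p]^{-1}\d^\alpha u_{N(n)}$, where $g[\cdot,\cdot;p]$ is the $L^p$-isometric dislocation of \cref{definition;dislocation of Sobolev}. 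Thus $\|u\|_{\Wdmp}^p=\sum_{|\alpha|=m}\|\d^\alpha u\|_{L^p(\RN)}^p$ turns everything into $L^p$ problems on which the dislocations act isometrically.

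Fix a finite $L$ and a radius $R>0$, and split $\RN=\Bnrl\cup(\RN\setminus\Bnrl)$. On the complement, since $\RN\setminus\Bnrl\subseteq\RN\setminus B(\dsl{y}{l}{N(n)},2^{-\dsl{j}{l}{N(n)}}R)$ for each $l$, the change of variables gives $\|\dsl{g}{l}{N(n)}\bbl{w}{l}\|_{\dot{W}^{m,p}(\RN\setminus\Bnrl)}\le\|\bbl{w}{l}\|_{\dot{W}^{m,p}(\RN\setminus B(0,R))}$, which tends to $0$ as $R\toinfty$ uniformly in $n$; setting $\eps_R:=\sum_{l=0}^{L}\|\bbl{w}{l}\|_{\dot{W}^{m,p}(\RN\setminus B(0,R))}$ the reverse triangle inequality yields
\[
\|u_{N(n)}\|_{\dot{W}^{m,p}(\RN\setminus\Bnrl)}\ge\|r^L_{N(n)}\|_{\dot{W}^{m,p}(\RN\setminus\Bnrl)}-\eps_R,\qquad \eps_R\to 0 .
\]
On each ball, the change of variables by $\dsl{g}{l}{N(n)}^{-1}$ maps $B(\dsl{y}{l}{N(n)},2^{-\dsl{j}{l}{N(n)}}R)$ onto $B(0,R)$ and carries $\d^\alpha u_{N(n)}$ to $\d^\alpha(\dsl{g}{l}{N(n)}^{-1}u_{N(n)})$, which by~\eqref{202105210050} and the duality representation used in \cref{prop;Wdmp dislocation space} converges weakly in $L^p$ to $\d^\alpha\bbl{w}{l}$; weak lower semicontinuity of the $L^p(B(0,R))$ norm then gives $\varliminf_n\|u_{N(n)}\|^p_{\dot{W}^{m,p}(B(\dsl{y}{l}{N(n)},2^{-\dsl{j}{l}{N(n)}}R))}\ge\|\bbl{w}{l}\|^p_{\dot{W}^{m,p}(B(0,R))}$.

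The main obstacle is that the balls making up $\Bnrl$ need not be disjoint: under cases~(a)/(b) of~\eqref{eq;500506} the ball attached to the finer scale is vastly smaller than, and may be nested inside, the ball of the coarser scale, so one cannot simply add the ball lower bounds to the complement bound without over-counting overlaps. I would resolve this by the scale-ordered cut-off induction of \cref{lemma;cut off induction} together with the arithmetic of \cref{lemma;calc in proof of claim E}: order the profiles so that $\dsl{j}{l}{N(n)}$ is nondecreasing, assign to the $l$-th profile the \emph{excised} region obtained by removing from its ball all strictly finer balls, and use the scale separation to show that (i) the $L^p$-mass of $\d^\alpha\bbl{w}{l}$ lost by the excision vanishes as $n\toinfty$, since the excised balls shrink relative to the fixed scale of $\bbl{w}{l}$, and (ii) every coarser profile and the residual contribute negligibly inside the $l$-th excised region. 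These regions together with $\RN\setminus\Bnrl$ are disjoint, so their lower bounds add; applying $\varlimsup(a_n+b_n)\ge\varliminf a_n+\varlimsup b_n$ to the ball part $a_n$ and the complement part $b_n$, then letting $R\toinfty$ so that $\|\bbl{w}{l}\|^p_{\dot{W}^{m,p}(B(0,R))}\uparrow\|\bbl{w}{l}\|_{\Wdmp}^p$ and $\eps_R\to 0$, yields for each finite $L$
\[
\varlimsup_{n\toinfty}\|u_{N(n)}\|_{\Wdmp}^p\ge\sum_{l=0}^{L}\|\bbl{w}{l}\|_{\Wdmp}^p+\varlimsup_{R\toinfty}\varlimsup_{n\toinfty}\|r^L_{N(n)}\|^p_{\dot{W}^{m,p}(\RN\setminus\Bnrl)} .
\]
Since $\sum_{l=0}^{L}\|\bbl{w}{l}\|^p$ is nondecreasing in $L$ and bounded by the $L$-independent left-hand side, it converges to $\sum_{l=0}^{\Lambda}\|\bbl{w}{l}\|^p$; rearranging and taking $\varlimsup_{L\to\Lambda}$ gives~\eqref{eq;20019711}.

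For the exactness condition~\eqref{202104040040}, recall from \cref{definition;dislocation of Sobolev} that $2^{-jN/\ppm}r^L_{N(n)}(2^{-j}\cdot+y)=g[y,j;\ppm]^{-1}r^L_{N(n)}$, so that $\sup_{y\in\RN,\,j\in\Z}|\la\phi,2^{-jN/\ppm}r^L_{N(n)}(2^{-j}\cdot+y)\ra|=\sup_{g\in G[\RN,\Z;\ppm]}|\la\phi,g^{-1}r^L_{N(n)}\ra|$. The asymptotic exactness built into \cref{theorem;ProDeco in X} supplies precisely $\lim_{L\to\Lambda}\sup_{\phi\in U}\varlimsup_{n\toinfty}\sup_{g}|\la\phi,g^{-1}r^L_{N(n)}\ra|=0$, which is~\eqref{202104040040}. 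Hence this part needs no work beyond translating the abstract $G$-weak defect of the residual into the explicit dilation–translation form, the bound~\eqref{202105210070} guaranteeing the residuals stay in $\Wdmp$ throughout.
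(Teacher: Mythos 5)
Your proposal is correct and follows essentially the same route as the paper's proof: the same per-$\alpha$ reduction via the scaling identity, the same partition of $\RN$ into excised concentration regions plus the complement via \cref{lemma;cut off induction}, the same tail estimates $\|\bbl{w}{l}\|_{\dot{W}^{m,p}(\RN\setminus B(0,R))}\to 0$ for the complement part, weak-convergence-based lower bounds on the moving balls (your weak lower semicontinuity on varying domains is precisely what the paper's convexity inequality combined with \cref{lemma;calc in proof of claim E} establishes), and \cref{theorem;ProDeco in X} for the exactness condition. The only step you should make explicit is that, when $\Lambda=\infty$, the exactness assertion of \cref{theorem;ProDeco in X} is conditional on $\|\bbl{w}{l}\|_{\Wdmp}\to 0$, which is exactly what the convergence of the series $\sum_{l}\|\bbl{w}{l}\|_{\Wdmp}^p$ in your just-proven energy estimate supplies --- the same one-line observation the paper relies on.
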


\begin{proof}
Once one has proved the energy estimate~\eqref{eq;20019711}, then the latter condition~\eqref{202104040040} will be readily obtained due to \cref{theorem;ProDeco in X}. 
Hence we shall only prove~\eqref{eq;20019711}. 
Since we have the trichotomy on dislocations $\dsl{j}{l}{N(n)}$, we  can define the following disjoint sets: for $0 \le l \le L \in \NN^{<\Lambda+1}$, 
\begin{align*}
\Jlp &\coloneqq \qty{ l' \in \{0,\ldots, L\}; \ 
\dsl{j}{l'}{N(n)} -\dsl{j}{l}{N(n)} \to +\infty 
\ \mbox{as} \ n\toinfty }, \\
\Jln &\coloneqq \qty{ l' \in \{0,\ldots, L\}; \ 
\dsl{j}{l'}{N(n)} -\dsl{j}{l}{N(n)} \to -\infty
\ \mbox{as} \ n\toinfty }, \\
\Jlz &\coloneqq \qty{ l' \in \{0,\ldots, L\}; \ 
( \dsl{j}{l'}{N(n)} -\dsl{j}{l}{N(n)}  ) \  \mbox{is convergent} },\\
\Jlp &\sqcup \Jln \sqcup \Jlz = \{0,\ldots,L\}, 
\end{align*}
where $\sqcup$ denotes a disjoint union of sets.
Roughly speaking, $\Jlp$ stands for the set of profile numbers up to $L$ whose profiles concentrate faster than the $l$-th profile $\bbl{w}{l}$; 
$\Jln$ for the set of profile numbers up to $L$ whose profiles concentrate slower than the $l$-th profile $\bbl{w}{l}$;
$\Jlz$ for the set of profile numbers up to $L$ whose profiles concentrate at the same speed as the $l$-th profile $\bbl{w}{l}$.

Let $\chi$ be a characteristic function supported on the unit ball $B(0,1)$ and 
set $\chi_R(\cdot) \coloneqq \chi(R^{-1}\cdot) = \chi_{B(0,R)}$. 
Also we define the scaling action $\sigma[y,j]$ 
for $y \in \RN $ and $ j \in \Z$ by 
\begin{equation*}
(\sigma[y,j]u)(\cdot) \coloneqq 
u ( 2^j (\cdot -y)),  \quad u \in L^1_{\rm loc}(\RN).
\end{equation*}
It is easily checked that 
\begin{align*}
g[y,j;\ppm] u 
&
=2^{jN/\ppm} u (2^{j}(\cdot -y)) 
=2^{ j N/\ppm} \sigma[y,j] u,
\\
\d^\alpha  (g[y,j;\ppm]u)
&
=2^{ jN/\ppm  } 2^{mj} \sigma[y,j](\d^\alpha u)
\\
&=2^{jN/p} \sigma[y,j] (\d^\alpha u)
=g[y,j;p](\d^\alpha u),
\end{align*}
where $\alpha \in (\NN)^N$ with $|\alpha|=m$. 
For dislocations $(\dsl{y}{l}{n}, \dsl{j}{l}{n}) \in \RN \times \Z$, we write 
\begin{equation*}
\dsl{\sigma}{l}{n} u  \coloneqq 
\sigma[\dsl{y}{l}{n}, \dsl{j}{l}{n}] u 
\end{equation*}
for short. 
Then the characteristic function supported on $B(\dsl{y}{l}{N(n)}, 2^{-\dsl{j}{l}{N(n)} } R )$ can be written by 
$\dsl{\sigma}{l}{N(n)} \chi_R$.

Now we provide the following  identity: 
for sufficiently large $n \gg 1$, 
\begin{equation}\label{202105020090}
    1 \equiv
     \sum_{l=0}^L \dsl{\sigma}{l}{N(n)} 
 \chi_R \prod_{l' \in \Jlp } (1- \dsl{\sigma}{l'}{N(n)} \chi_R )
  +  \prod_{l=0}^L (1 - \dsl{\sigma}{l}{N(n)} \chi_R ). 
\end{equation}
When $\Jlp = \emptyset$, we always assume $\prod_{l' \in \Jlp } (1-\dsl{\sigma}{l'}{N(n)} \chi_R) \equiv 1$. 
The above identity will be shown by induction on $L$ in \cref{lemma;cut off induction} below. 
The meaning of the identity reads as follows: 
the last term $\prod_{l=0}^L (1 - \dsl{\sigma}{l}{N(n)} \chi_R)$ means the characteristic function supported on $\RN \setminus \Bnrl$, 
and $\dsl{\sigma}{l}{N(n)} 
 \chi_R \prod_{l' \in \Jlp } (1- \dsl{\sigma}{l'}{N(n)} \chi_R )$  
restricts our perspective onto $B(\dsl{y}{l}{N(n)}, 2^{-\dsl{j}{l}{N(n)} } R )$ excluding balls 
$B(\dsl{y}{l'}{N(n)}, 2^{-\dsl{j}{l'}{N(n)} } R )$ 
for $l' \in \mathcal{J}^+_{l,L}$ which stand for the (essential) supports of the profiles  concentrating  faster than the $l$-th profile $\bbl{w}{l}$. 
Roughly speaking, the identity splits $\RN$ into 
disjoint supports of profiles $\bbl{w}{0}, \ldots, \bbl{w}{L}$ and the residual term  $r^L_{N(n)}$.

With these devices, one sees that for large $n$ and 
for a fixed multi-index $\alpha\in(\NN)^N$ with $|\alpha|=m$,  
\begin{align}
\label{202105020010}
\int_{\RN} |\d^\alpha u_{N(n)}|^p \,\dd x 
&
=
\sum_{l=0}^L \int_{\RN} |\d^\alpha u_{N(n)} |^p 
\dsl{\sigma}{l}{N(n)} \chi_R \prod_{l' \in \Jlp } (1- \dsl{\sigma}{l'}{N(n)} \chi_R )
\,\dd x  
\\
&\notag \hspace{3cm}
+
\int_{\RN} |\d^\alpha u_{N(n)} |^p 
  \prod_{l=0}^L (1 - \dsl{\sigma}{l}{N(n)} \chi_R )  \,\dd x 
\\ 
&\notag 
\eqqcolon \sum_{l=0}^L \bbl{J}{l}_1 + J_2. 
\end{align}
Due to the convexity of $|\cdot|^p$, one sees that 
\begin{align}\label{202105020020}
J_2 
& \ge 
\int_{\RN} \l[  |\d^\alpha r^L_{N(n)} |^p +
p|\d^\alpha r^L_{N(n)} |^{p-2} 
\d^\alpha r^L_{N(n)} \d^\alpha \l( 
\sum_{l=0}^L \dsl{g}{l}{N(n)} \bbl{w}{l} \r) \r]  
\\
&\notag \hspace{5cm}
\times \prod_{l=0}^L (1 - \dsl{\sigma}{l}{N(n)} \chi_R ) \,\dd x 
\\ 
&\notag 
= 
\int_{\RN} |\d^\alpha r^L_{N(n)} |^p 
\prod_{l=0}^L  (1 - \dsl{\sigma}{l}{N(n)} \chi_R )  \,\dd x 
\\ 
&\notag 
\qquad + p \sum_{l=0}^L  
\int_{\RN} |\d^\alpha r^L_{N(n)} |^{p-2} 
\d^\alpha r^L_{N(n)} \d^\alpha \l( 
\dsl{g}{l}{N(n)} \bbl{w}{l}  \r)  \\
&\notag 
\hspace{5cm}
\times \prod_{l=0}^L  (1 - \dsl{\sigma}{l}{N(n)} \chi_R ) \,\dd x 
\\ 
&\notag 
\eqqcolon 
J_3
+ p \sum_{l=0}^L \bbl{J}{l}_4. 
\end{align}
As is observed before, one has  
\begin{equation*}
\prod_{l=0}^L  (1 - \dsl{\sigma}{l}{N(n)} \chi_R )
=
\chi_{ \RN \setminus \qty[ \bigcup_{l=0}^L B \qty(\dsl{y}{l}{N(n)}, 2^{ -  \dsl{j}{l}{N(n)} } R  )  ] }
= \chi_{\RN \setminus \Bnrl}, 
\end{equation*}
and hence, we find that 
\begin{align}\label{202105020030}
J_3
=\int_{\RN \setminus \Bnrl}
|\d^\alpha r^L_{N(n)} |^p  \,\dd x.   
\end{align}

From the H\"older inequality, the change of variables and~\eqref{202105210070}, one also gets 
\begin{align}\label{202105020040}
|\bbl{J}{l}_4|
&
\le \| \d^\alpha r^L_{N(n)} \|_{L^p(\RN) }^{p-1}
\times \l( \int_{\RN} |\d^\alpha ( \dsl{g}{l}{N(n)} \bbl{w}{l} )|^p 
\prod_{l=0}^L (1 - \dsl{\sigma}{l}{N(n)} \chi_R ) \,\dd x \r)^{1/p} 
\\ 
&\notag  
\le C_L 
\l( \int_{\RN} |\d^\alpha  \bbl{w}{l} |^p 
 (1 - \chi_{B(0,R)} ) \,\dd x \r)^{1/p}
 \\
&\notag  
= C_L 
\l( \int_{\RN \setminus B(0,R)} 
|\d^\alpha  \bbl{w}{l} |^p 
\,\dd x \r)^{1/p} 
\to 0, \quad \mbox{as} \ R\toinfty. 
\end{align}
From~\eqref{202105020020}--\eqref{202105020040},
one obtains
\begin{equation}
\label{202105090030}
J_2 
\ge 
\int_{\RN \setminus \Bnrl}
|\d^\alpha r^L_{n(n)} |^p  \,\dd x
-C_L \sum_{l=0}^L \l( \int_{\RN \setminus B(0,R)} 
|\d^\alpha  \bbl{w}{l} |^p 
\,\dd x \r)^{1/p}. 
\end{equation}

Now we go back to the first term in the last line of~\eqref{202105020010}.
By changing variables, we get  
\begin{align}\label{202105020050}
\bbl{J}{l}_1
&= \int_{\RN} |\d^\alpha u_{N(n)} |^p 
\dsl{\sigma}{l}{N(n)} \chi_R 
\prod_{l' \in \Jlp } (1- \dsl{\sigma}{l'}{N(n)} \chi_R )  \,\dd x \\
&\notag
= \int_{\RN} |\d^\alpha ( [ \dsl{g}{l}{N(n)}]^{-1} u_{N(n)} ) |^p 
 \chi_{R}
 \times  \prod_{l' \in \Jlp }
 (1- [\dsl{\sigma}{l}{N(n)}]^{-1} \dsl{\sigma}{l'}{N(n)} \chi_R )  \,\dd x, 
\end{align}
where 
$[\dsl{\sigma}{l}{N(n)}]^{-1} \dsl{\sigma}{l'}{N(n)} \chi_R$ 
denotes the characteristic function supported on the ball \\
$B \qty( 2^{\dsl{j}{l}{N(n)}}(\dsl{y}{l'}{N(n)} - \dsl{y}{l}{N(n)}  ), 
2^{- ( \dsl{j}{l'}{N(n)} - \dsl{j}{l}{N(n)} ) } R   )$.
Due to the convexity again, one has 
\begin{align}
\label{202105020060}
 &\int_{\RN} |\d^\alpha ( [ \dsl{g}{l}{N(n)}]^{-1} u_{N(n)} ) |^p 
 \chi_{R} \prod_{l' \in \Jlp }
 (1- [\dsl{\sigma}{l}{N(n)}]^{-1} \dsl{\sigma}{l'}{N(n)} \chi_R )
 \,\dd x \\
 &\notag  \ge 
 \int_{\RN} |\d^\alpha \bbl{w}{l} |^p 
 \chi_{R} \prod_{l' \in \Jlp }
 (1- [\dsl{\sigma}{l}{N(n)}]^{-1} \dsl{\sigma}{l'}{N(n)} \chi_R )
   \,\dd x \\
&\notag  \quad +p
 \int_{\RN} |\d^\alpha \bbl{w}{l} |^{p-2} 
 \d^\alpha \bbl{w}{l} (\d^\alpha ( [ \dsl{g}{l}{N(n)}]^{-1} u_{N(n)} ) - \d^\alpha \bbl{w}{l}  ) \\ 
&\notag \hspace{3cm} 
\times \chi_{R} \prod_{l' \in \Jlp }
 (1- [\dsl{\sigma}{l}{N(n)}]^{-1} \dsl{\sigma}{l'}{N(n)} \chi_R ) \,\dd x. 
\end{align}

Moreover, one can show that 
\begin{align}
& \label{202105110010}
\prod_{l' \in \Jlp } (1-[\dsl{\sigma}{l}{N(n)}]^{-1}
\dsl{\sigma}{l'}{N(n)} \chi_{R}) \to 1 \s \mbox{in} \ L^q(\RN) 
\ ( q \in [1,+\infty[ ),
\\ 
& \label{202105090040}
\int_{\RN} | \d^\alpha \bbl{w}{l} |^p 
 \chi_{R} \prod_{l' \in \Jlp }
 (1-[\dsl{\sigma}{l}{N(n)}]^{-1}
\dsl{\sigma}{l'}{N(n)} \chi_{R})  \,\dd x
\to \int_{\RN} |\d^\alpha \bbl{w}{l} |^p  \chi_{R} \,\dd x, 
\\
&\label{202105090041}
\int_{\RN} |\d^\alpha \bbl{w}{l} |^{p-2} 
 \d^\alpha \bbl{w}{l} (\d^\alpha ( [ \dsl{g}{l}{N(n)}]^{-1} u_{N(n)} ) - \d^\alpha \bbl{w}{l}  )  
 \\
 &\notag \hspace{3cm}
 \times \chi_{R} \prod_{l' \in \Jlp }
 (1-[\dsl{\sigma}{l}{N(n)}]^{-1}
\dsl{\sigma}{l'}{N(n)} \chi_{R}) \,\dd x
 \to 0,
\end{align}
as $n\toinfty$ (these facts will be proved in \cref{lemma;calc in proof of claim E} below).
Hence from~\eqref{202105020050}--\eqref{202105090041},
one gets 
\begin{equation}
\label{202105090060}
\bbl{J}{l}_1
 \ge 
 \int_{\RN} |\d^\alpha \bbl{w}{l} |^p  \chi_R \,\dd x + o(1)
\end{equation}
as $n\toinfty.$
Combining~\eqref{202105020010},~\eqref{202105090030} and~\eqref{202105090060}, 
one obtains that 
\begin{align}
\label{202105090100}
\int_{\RN} |\d^\alpha u_{N(n)}  |^p \,\dd x 
&\ge 
\sum_{l=0}^L  
\int_{\RN} |\d^\alpha \bbl{w}{l} |^p  \chi_{R}  \,\dd x  
+ \int_{\RN \setminus \Bnrl} |\d^\alpha  r^L_{N(n)} |^p \,\dd x
\\ 
&\notag \qquad 
- C_L \sum_{k=0}^L 
\int_{\RN \setminus B(0,R)} | \d^\alpha  \bbl{w}{k}  |^p \,\dd x+ o(1)
\end{align}
as $n \toinfty$. 
Summing up~\eqref{202105090100} over all multi-indices $\alpha$ with $|\alpha|=m$ and 
passing to the limits as $n \toinfty$, $R\toinfty$ and then  $L \to \Lambda$, we see that 
\begin{align*}
&\varlimsup_{n\toinfty}
\| u_{N(n)} \|_{\Wdmp}^p 
\ge 
\sum_{l=0}^\Lambda \| \bbl{w}{l} \|_{\Wdmp}^p  
+ \varlimsup_{L\to \Lambda} \varlimsup_{R\toinfty} \varlimsup_{n\toinfty}
\| r^L_{N(n)} \|_{ \dot{W}^{m,p}(\RN \setminus \Bnrl)  }^p. 
\end{align*}
Hence it remains to prove~\eqref{202105110010}--\eqref{202105090041}.
\end{proof}

\begin{lemma}[Boundedness of the residual term~\eqref{eq;residue bounded Wdmp}]
\label{lemma;residue bounded Wdmp}

It holds that 
\begin{equation*}
    \varlimsup_{L\to \Lambda}\varlimsup_{n\toinfty} \|r^L_{N(n)}\|_{\Wdmp}
    \le 2 \varlimsup_{n\toinfty}\| u_{N(n)}  \|_{\Wdmp} <+\infty.
\end{equation*}
\end{lemma}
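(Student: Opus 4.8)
The plan is to bound the residual by the triangle inequality, isolating the contribution of the sum of profiles from that of $u_{N(n)}$, and then to control the sum of profiles by an asymptotic orthogonality estimate in the homogeneous Sobolev norm. Set $A \coloneqq \varlimsup_{n\toinfty}\|u_{N(n)}\|_{\Wdmp}$, which is finite because $(u_n)$ is bounded in $\Wdmp$, and abbreviate $S^L_{N(n)} \coloneqq \sum_{l=0}^L \dsl{g}{l}{N(n)}\bbl{w}{l}$, so that $r^L_{N(n)} = u_{N(n)} - S^L_{N(n)}$. First I would record that the already-established energy estimate~\eqref{eq;20019711}, after discarding its nonnegative residual contribution, yields the summability of the profile energies:
\[
\sum_{l=0}^\Lambda \|\bbl{w}{l}\|_{\Wdmp}^p \le \varlimsup_{n\toinfty}\|u_{N(n)}\|_{\Wdmp}^p = A^p .
\]

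The crux is the asymptotic additivity of the profiles, namely
\[
\varlimsup_{n\toinfty}\big\|S^L_{N(n)}\big\|_{\Wdmp}^p \le \sum_{l=0}^L \|\bbl{w}{l}\|_{\Wdmp}^p \qquad (L\in\NN^{<\Lambda+1}).
\]
Since each $\dsl{g}{l}{N(n)}$ is an isometry, the plain triangle inequality only gives $\|S^L_{N(n)}\|_{\Wdmp}\le\sum_{l=0}^L\|\bbl{w}{l}\|_{\Wdmp}$, which is far too lossy, as it blows up with the number of profiles; the gain comes entirely from the mutual separation of dislocations recorded in~\eqref{202106240010}. To prove the additivity I would fix a multi-index $\alpha$ with $|\alpha|=m$ and use the intertwining identity $\partial^\alpha(\dsl{g}{l}{N(n)}\bbl{w}{l}) = g[\dsl{y}{l}{N(n)},\dsl{j}{l}{N(n)};p]\,\partial^\alpha\bbl{w}{l}$ from the proof of \cref{lemma;energy estim Wdmp}, reducing the claim to the $L^p$-additivity of the separated dislocated functions $v_l \coloneqq \partial^\alpha\bbl{w}{l}\in L^p(\RN)$. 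This in turn follows from the very same domain-separation machinery employed there, i.e.\ the partition identity~\eqref{202105020090} together with \cref{lemma;cut off induction,lemma;calc in proof of claim E}: on each dilated ball the sum reduces to a single profile, on the complement of all balls it is negligible, and the cross terms are supported on asymptotically vanishing overlaps (equality in fact holds, but only the upper bound is needed).

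Granting the additivity, the conclusion is immediate. By the triangle inequality and subadditivity of $\varlimsup$,
\[
\varlimsup_{n\toinfty}\|r^L_{N(n)}\|_{\Wdmp} \le \varlimsup_{n\toinfty}\|u_{N(n)}\|_{\Wdmp} + \varlimsup_{n\toinfty}\|S^L_{N(n)}\|_{\Wdmp} \le A + \Big(\sum_{l=0}^L\|\bbl{w}{l}\|_{\Wdmp}^p\Big)^{1/p}.
\]
Letting $L\to\Lambda$ and invoking the summability of the profile energies gives
\[
\varlimsup_{L\to\Lambda}\varlimsup_{n\toinfty}\|r^L_{N(n)}\|_{\Wdmp} \le A + \Big(\sum_{l=0}^\Lambda\|\bbl{w}{l}\|_{\Wdmp}^p\Big)^{1/p} \le 2A <+\infty ,
\]
which is the assertion. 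The main obstacle is thus the asymptotic additivity estimate; everything else is a short application of the triangle inequality and of the energy bound for the profiles. In particular, the factor $2$ is simply the sum of the two contributions $A$ (from $u_{N(n)}$) and $\big(\sum_l\|\bbl{w}{l}\|_{\Wdmp}^p\big)^{1/p}\le A$ (from the profiles), so that no finer accounting of the cross terms is required.
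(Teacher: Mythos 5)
Your overall skeleton is exactly the paper's: write $r^L_{N(n)}=u_{N(n)}-S^L_{N(n)}$, apply the triangle inequality, bound $\varlimsup_n\|S^L_{N(n)}\|_{\Wdmp}$ by $\bigl(\sum_{l=0}^\Lambda\|\bbl{w}{l}\|_{\Wdmp}^p\bigr)^{1/p}$, and control that sum by $\varlimsup_n\|u_{N(n)}\|_{\Wdmp}$ via the energy estimate~\eqref{eq;20019711} with the residual contribution discarded; this is precisely how the paper obtains the factor $2$ (through its~\eqref{202105120010}). Where you diverge is in the proof of the asymptotic additivity of $\|S^L_{N(n)}\|_{\Wdmp}^p$, which you correctly identify as the crux. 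The paper does \emph{not} reuse the domain-separation machinery here: it invokes the elementary numerical inequality $\bigl||\sum_l \alpha_l|^p-\sum_l|\alpha_l|^p\bigr|\le C_L\sum_{l\neq k}|\alpha_l||\alpha_k|^{p-1}$ (proved in the style of \cref{lemma;10180009}), which reduces the additivity to the vanishing of the mixed integrals $\int_{\RN}|\d^\alpha\dsl{g}{l}{N(n)}\bbl{w}{l}|\,|\d^\alpha\dsl{g}{l'}{N(n)}\bbl{w}{l'}|^{p-1}\dx$, and these tend to zero by the mutual orthogonality condition~\eqref{eq;500506} (this is~\eqref{202105020110}), yielding in fact asymptotic equality~\eqref{202103150021}. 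Your alternative — re-purposing the partition identity and \cref{lemma;cut off induction,lemma;calc in proof of claim E} — can be made to work, but note that those lemmas as stated are formulated for $u_{N(n)}$ and for the one-sided (lower-bound) direction of \cref{lemma;energy estim Wdmp}; they do not apply verbatim to $S^L_{N(n)}$, and your gloss that ``on each dilated ball the sum reduces to a single profile'' is only true asymptotically in $n$ and then $R$ (the other profiles have nonzero, merely asymptotically small, $L^p$-mass there), so a genuine adaptation with the extra limit in $R$ would be required. The paper's route avoids all of this: it is purely pointwise-plus-Hölder and needs no cut-off geometry, which is why it is the more economical choice for this lemma. Your observation that only the upper bound (not equality) is needed is correct, but it does not shorten either argument.
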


\begin{proof}
Set $S^L_{N(n)} \coloneqq \sum_{l=0}^L \dsl{g}{l}{N(n)} \bbl{w}{l}.$
We shall employ the following elementary inequality for the Euclidean norm:
for all $\alpha_l\in\R^d$ $(l=1,\ldots,L, \ d,L\in\N)$, 
\begin{equation*}
\l| \l| \sum_{l=1}^L \alpha_l \r|^p - \sum_{l=1}^L |\alpha_l|^p \r|\le C_L \sum_{l\neq m}|\alpha_l||\alpha_m|^{p-1}
\end{equation*}
for some constant $C_L>0$. 
This inequality will be shown similarly to \cref{lemma;10180009} below.
From this, we see that, for any multi-index $\alpha$ with $|\alpha| = m$, 
\begin{align*}
&\l| \int_{\RN} |\d^\alpha S^L_{N(n)}|^p \,\dd x
- \sum_{l=0}^L \int_{\RN} |\d^\alpha \bbl{w}{l}|^p \,\dd x \r| \\
&\le 
\int_{\RN} \l| \l| \sum_{l=0}^L \d^\alpha \dsl{g}{l}{N(n)} \bbl{w}{l} \r|^p - \sum_{l=0}^L |\d^\alpha \dsl{g}{l}{N(n)} \bbl{w}{l} |^p \r| \,\dd x \\
&\le C_L  \sum_{l \neq l'} 
\int_{\RN} |\d^\alpha \dsl{g}{l}{N(n)} \bbl{w}{l}| |\d^\alpha \dsl{g}{l'}{N(n)} \bbl{w}{l'}|^{p-1} \,\dd x.
\end{align*}
The mutual orthogonality condition~\eqref{eq;500506} implies that for any $0 \le l \neq l' \le L$, 
\begin{equation}
\label{202105020110}
\int_{\RN} |\d^\alpha \dsl{g}{l}{N(n)} \bbl{w}{l}| |\d^\alpha \dsl{g}{l'}{N(n)} \bbl{w}{l'}|^{p-1} \,\dd x
\to 0
\end{equation}
as $n\toinfty$. 
Hence we get 
\begin{equation*}
\lim_{n\toinfty}
\int_{\RN} | \d^\alpha S^L_{N(n)}|^p \,\dd x
= \sum_{l=0}^L \int_{\RN} | \d^\alpha \bbl{w}{l}|^p \,\dd x.
\end{equation*}

Adding up the above over all multi-indices $\alpha$ with $|\alpha| =  m$, we obtain 
\begin{equation}\label{202103150021}
\| S^L_{N(n)} \|_{\Wdmp}^p
= \sum_{l=0}^L \| \bbl{w}{l} \|_{\Wdmp}^p +o(1) \quad (n\toinfty),
\end{equation}
and from~\eqref{eq;20019711} and~\eqref{202103150021}, one sees that 
\begin{align}
\label{202105120010}
\lim_{L\to \Lambda} \lim_{n \toinfty} 
\| S^L_{N(n)} \|_{\Wdmp}^p
&
=\sum_{l=0}^\Lambda \| \bbl{w}{l} \|_{\Wdmp}^p
\\
&\notag 
\le \varlimsup_{n\toinfty} \| u_{N(n)} \|_{\Wdmp}^p.
\end{align}
So we observe  that    
\begin{align*}
&\varlimsup_{L \to \Lambda} \varlimsup_{n\toinfty} 
\|r^L_{N(n)}\|_{\Wdmp} \\ 
&\le 
\varlimsup_{n\toinfty} 
\|u_{N(n)}\|_{\Wdmp}
+
\varlimsup_{L \to \Lambda} \varlimsup_{n\toinfty} 
\|S^L_{N(n)}\|_{\Wdmp} \\ 
&\le 2 \varlimsup_{n\toinfty} \| u_{N(n)} \|_{\Wdmp}, 
\end{align*}
which implies the boundedness of the double sequence 
$(r^L_{N(n)})$ in $\Wdmp$.
\end{proof}
\end{step}

\begin{step}[Vanishing of the residual term]
\begin{lemma}[Vanishing of the residual term~\eqref{residue vanish Wdmp}]
It holds that  
\begin{align*}
&\lim_{L\to \Lambda}\varlimsup_{n\toinfty}\|r^L_{N(n)} \|_{L^{\ppm}(\RN)}=0, \\ 
&\lim_{L\to \Lambda}\varlimsup_{n\toinfty}\|r^L_{N(n)} \|_{\dot{W}^{k,p^*_{m-k}}(\RN)}=0,\quad k\in\NN, \ k<m.
\end{align*}
\end{lemma}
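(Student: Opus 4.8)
The plan is to derive both limits from the exactness condition~\eqref{eq;Ishiwata condition Wdmp} by feeding it into the interpolation inequalities of \cref{lemma;G-complete continuity of Wdmp,lemma;G-complete continuity of Wdmp2}. Since the case $k=0$ of the second limit coincides with the first (as $\dot W^{0,\ppm}(\RN)=L^{\ppm}(\RN)$), I treat both by writing $q=\ppm$ or $q=p^*_{m-k}$ ($1\le k<m$) and applying the relevant estimate to the residual term, e.g.
\[
\|r^L_{N(n)}\|_{L^{\ppm}(\RN)}\le C\,\|r^L_{N(n)}\|_{\Wdmp}^{p/\ppm}\,\|r^L_{N(n)}\|_{\dot{B}^{-N/\ppm}_{\infty,\infty}(\RN)}^{1-p/\ppm}
\]
and its $\dot W^{k,p^*_{m-k}}(\RN)$ analogue. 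Taking $\varlimsup_{n\to\infty}$, using that the $\varlimsup$ of a product of nonnegative sequences is at most the product of the $\varlimsup$'s, and bounding the Sobolev factor by~\eqref{eq;residue bounded Wdmp} (which keeps $\varlimsup_{n\to\infty}\|r^L_{N(n)}\|_{\Wdmp}$ bounded as $L\to\Lambda$), both statements reduce to
\[
\lim_{L\to\Lambda}\varlimsup_{n\to\infty}\|r^L_{N(n)}\|_{\dot{B}^{-N/q}_{\infty,\infty}(\RN)}=0.
\]

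For the endpoint $q=\ppm$ I would argue directly. A fixed Littlewood--Paley bump yields a single functional $\phi_0\in U=B_{(\Wdmp)^*}(1)$ such that, by the change of variables relating the dyadic dilation to the $\ppm$-normalized dislocation,
\[
\|u\|_{\dot{B}^{-N/\ppm}_{\infty,\infty}(\RN)}\le C\sup_{y\in\RN,\,j\in\Z}\big|\la\phi_0,\,2^{-jN/\ppm}u(2^{-j}\cdot+y)\ra\big|\qquad(u\in\Wdmp).
\]
Applying this with $u=r^L_{N(n)}$ and taking $\varlimsup_{n\to\infty}$, the crucial point is that $\phi_0$ is a \emph{single} element of $U$, so the $\varlimsup_{n\to\infty}$ can be performed before enlarging to the supremum over $U$: this gives
\[
\varlimsup_{n\to\infty}\|r^L_{N(n)}\|_{\dot{B}^{-N/\ppm}_{\infty,\infty}(\RN)}\le C\sup_{\phi\in U}\varlimsup_{n\to\infty}\sup_{y,j}\big|\la\phi,\,2^{-jN/\ppm}r^L_{N(n)}(2^{-j}\cdot+y)\ra\big|,
\]
whose right-hand side tends to $0$ as $L\to\Lambda$ by~\eqref{eq;Ishiwata condition Wdmp}.

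For $q=p^*_{m-k}$ with $k\ge1$ the weight $2^{-jN/q}$ of the bump no longer matches the $\ppm$-normalization of the dislocations, so the single-functional comparison fails and I would instead use the complete-continuity content of \cref{lemma;G-complete continuity of Wdmp2} through a diagonal extraction. When $\Lambda<\infty$ this is immediate: \eqref{eq;Ishiwata condition Wdmp} forces $\sup_g|\la\phi,g^{-1}r^\Lambda_{N(n)}\ra|\to0$ for every $\phi$, i.e.\ $r^\Lambda_{N(n)}\to0$ $G[\RN,\Z;\ppm]$-weakly in $\Wdmp$, whence $\|r^\Lambda_{N(n)}\|_{\dot B^{-N/q}_{\infty,\infty}}\to0$ by the lemma. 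When $\Lambda=\infty$, if the limit failed there would be $\eps>0$ and $L_i\to\infty$ with $\varlimsup_n\|r^{L_i}_{N(n)}\|_{\dot B^{-N/q}_{\infty,\infty}}\ge\eps$; using that $(\Wdmp)^*$ is separable (as $\Wdmp$ is reflexive and separable for $1<p<\infty$), I would fix a countable dense $\{\phi_m\}\subset U$, set $A_L:=\sup_{\phi\in U}\varlimsup_n\sup_g|\la\phi,g^{-1}r^L_{N(n)}\ra|\to0$, and choose $n_i\uparrow\infty$ with $\|r^{L_i}_{N(n_i)}\|_{\dot B^{-N/q}_{\infty,\infty}}\ge\eps/2$ and $\sup_g|\la\phi_m,g^{-1}r^{L_i}_{N(n_i)}\ra|\le A_{L_i}+1/i$ for all $m\le i$. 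The uniform $\Wdmp$-bound~\eqref{eq;residue bounded Wdmp} together with the isometry of each $g$ then lets a density argument upgrade this to every $\phi\in(\Wdmp)^*$, so $v_i:=r^{L_i}_{N(n_i)}\to0$ $G[\RN,\Z;\ppm]$-weakly; the second assertion of \cref{lemma;G-complete continuity of Wdmp2} gives $\|v_i\|_{\dot B^{-N/q}_{\infty,\infty}}\to0$, contradicting $\|v_i\|_{\dot B^{-N/q}_{\infty,\infty}}\ge\eps/2$. This yields the reduced limit, and together with the first paragraph both assertions follow.

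I expect the main obstacle to be precisely this last passage: converting the exactness condition---an iterated-limit, uniform-over-$U$ smallness of the residual---into genuine $G$-weak nullity of a single sequence to which the $G$-complete continuity statements apply. The scaling mismatch when $q\neq\ppm$ is what rules out the clean single-functional comparison available at the endpoint and forces the separability/diagonal argument; everything else is routine bookkeeping with the product structure of the interpolation inequalities and the bound~\eqref{eq;residue bounded Wdmp}.
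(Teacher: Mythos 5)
Your proof is correct, but it reaches the conclusion by a different route than the paper: you re-derive, in concrete form, exactly the abstract step that the paper delegates to its appendix. The paper's entire proof is one sentence: apply \cref{theorem;weak G-comp conti} with $X=\Wdmp$ and $Y=L^{\ppm}(\RN)$, resp.\ $Y=W^{k,p^*_{m-k}}(\RN)$, whose hypotheses are precisely the uniform bound coming from \eqref{eq;residue bounded Wdmp}, the exactness condition \eqref{eq;Ishiwata condition Wdmp}, and the $G$-complete continuity of the embeddings supplied by \cref{lemma;G-complete continuity of Wdmp,lemma;G-complete continuity of Wdmp2}. The ``main obstacle'' you single out --- converting the iterated-limit, uniform-over-$U$ smallness of the residual into genuine $G[\RN,\Z;\ppm]$-weak nullity of a single sequence to which complete continuity applies --- is exactly what \cref{theorem;weak G-comp conti} packages, and your separability/diagonal contradiction argument is in essence a re-proof of that theorem. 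As for what each approach buys: your endpoint argument with the single Littlewood--Paley functional $\phi_0$ is a sharp, quantitative shortcut (it is essentially Tintarev's proof of cocompactness of $\Wdmp\hookrightarrow L^{\ppm}(\RN)$) that needs no diagonal extraction at all, whereas the paper's citation is shorter and, notably, never uses the interpolation inequalities --- only $G$-complete continuity as a black box. For the same reason, your detour through $\dot B^{-N/p^*_{m-k}}_{\infty,\infty}(\RN)$ in the lower-order cases is avoidable: once the diagonal sequence $v_i\to 0$ $G[\RN,\Z;\ppm]$-weakly is constructed, \cref{lemma;G-complete continuity of Wdmp2} yields $\|v_i\|_{W^{k,p^*_{m-k}}(\RN)}\to 0$ directly, so the contradiction can be run in $\dot W^{k,p^*_{m-k}}(\RN)$ itself, sidestepping both the Besov norm and the scaling mismatch you worried about. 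One bookkeeping remark on your diagonal step: \eqref{eq;residue bounded Wdmp} is an iterated $\varlimsup$ and does not by itself bound a diagonal subsequence, so you should fold into the choice of $n_i$ the extra requirement $\|r^{L_i}_{N(n_i)}\|_{\Wdmp}\le \varlimsup_{n\toinfty}\|r^{L_i}_{N(n)}\|_{\Wdmp}+1$; since all your constraints on $n_i$ are of the form ``for all sufficiently large $n$,'' this costs nothing and closes the point.
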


\begin{proof}
This lemma is readily checked by \cref{theorem;weak G-comp conti} and \cref{lemma;G-complete continuity of Wdmp,lemma;G-complete continuity of Wdmp2} together with~\eqref{202104040040}.
\end{proof}
\end{step}

The following two lemmas will provide supplementary calculations for proofs of the above lemmas. 
For the purpose of the proof of~\eqref{202105020090}, 
we generalize the identity as follows:

\begin{lemma}[Generalization of~\eqref{202105020090}]\label{lemma;cut off induction}
For any $L\in \N$ with $L \le \Lambda +1$  and for all 
$l_\nu \in \NN^{<\Lambda +1} \ (1 \le \nu \le L)$  with $l_\nu \neq l_{\nu'} \ (\nu \neq \nu')$,
it holds that 
\begin{equation*}
1 \equiv 
\sum_{\nu=1}^L 
\dsl{\sigma}{l_\nu}{N(n)} \chi_R 
\prod_{l_{\mu} \in J_{l_\nu,l_L}^+} (1- \dsl{\sigma}{l_\mu}{N(n)} \chi_{R} ) 
+ \prod_{\nu=1}^L (1- \dsl{\sigma}{l_\nu}{N(n)} \chi_{R} )
\end{equation*}
for sufficiently large $n \in \NN$,
where 
$\dsl{\sigma}{l_\nu}{N(n)} \chi_{R} $ denotes the characteristic function supported on the ball 
$B(\dsl{y}{l_\nu}{N(n)}, 2^{- \dsl{j}{l_\nu}{N(n)}} R )$, and 
\begin{equation*}
J_{l_\nu,l_L}^+ \coloneqq \qty{ l_\mu \in \NN; \ 1\le \nu \le L, \  \dsl{j}{l_\mu}{N(n)} - \dsl{j}{l_\nu}{N(n)} \to +\infty \ (n\toinfty) }.
\end{equation*}
\end{lemma}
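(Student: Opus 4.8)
The plan is to read the asserted identity as a pointwise equality between indicator functions and to reduce it to a purely combinatorial statement about the supporting balls. For $1\le\nu\le L$ set $B_\nu \coloneqq B(\dsl{y}{l_\nu}{N(n)}, 2^{-\dsl{j}{l_\nu}{N(n)}}R)$, so that $\dsl{\sigma}{l_\nu}{N(n)}\chi_R$ is exactly the characteristic function $\chi_{B_\nu}$, taking values in $\{0,1\}$. The final factor $\prod_{\nu=1}^L(1-\dsl{\sigma}{l_\nu}{N(n)}\chi_R)$ is the characteristic function of $\RN\setminus\bigcup_{\nu=1}^L B_\nu$, so it equals $1$ off the union and $0$ on it. Consequently the identity is equivalent to showing that, at every point $x\in\bigcup_{\nu}B_\nu$, exactly one summand $\chi_{B_\nu}(x)\prod_{l_\mu\in J_{l_\nu,l_L}^+}(1-\chi_{B_\mu}(x))$ equals $1$ and all the others vanish. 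Write $\mu\prec\nu$ whenever $l_\mu\in J_{l_\nu,l_L}^+$, i.e. $\dsl{j}{l_\mu}{N(n)}-\dsl{j}{l_\nu}{N(n)}\to+\infty$; transitivity of the divergence to $+\infty$ makes $\prec$ a strict partial order on $\{1,\dots,L\}$.

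The geometric heart of the argument is a disjointness dichotomy. Fix $\mu\neq\nu$. If $(\dsl{j}{l_\mu}{N(n)}-\dsl{j}{l_\nu}{N(n)})_n$ is bounded, then the orthogonality~\eqref{eq;500506} forces $2^{\dsl{j}{l_\nu}{N(n)}}|\dsl{y}{l_\mu}{N(n)}-\dsl{y}{l_\nu}{N(n)}|\to\infty$, while the ratio of the two radii $2^{-\dsl{j}{l_\mu}{N(n)}}R$ and $2^{-\dsl{j}{l_\nu}{N(n)}}R$, namely $2^{\dsl{j}{l_\nu}{N(n)}-\dsl{j}{l_\mu}{N(n)}}$, stays bounded; hence the centre separation eventually exceeds the sum of the radii and $B_\mu\cap B_\nu=\emptyset$. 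Since there are only finitely many pairs, this holds simultaneously for all such pairs once $n$ is large enough, which is precisely the uniform ``sufficiently large $n$'' in the statement. Therefore, for large $n$, if a point $x$ lies in both $B_\mu$ and $B_\nu$ then the bounded case is excluded, so one of $\dsl{j}{l_\mu}{N(n)}-\dsl{j}{l_\nu}{N(n)}\to\pm\infty$ holds and $\mu,\nu$ are $\prec$-comparable. In other words, the set $S(x)\coloneqq\{\nu:x\in B_\nu\}$ is totally ordered by $\prec$; being finite and nonempty it has a unique $\prec$-minimal element $\nu_0=\nu_0(x)$.

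It remains to match this minimal element with the single surviving summand. For $\nu=\nu_0$ we have $\chi_{B_{\nu_0}}(x)=1$, and every $\mu$ with $l_\mu\in J_{l_{\nu_0},l_L}^+$ satisfies $\mu\prec\nu_0$, hence lies outside $S(x)$ by minimality, so $\chi_{B_\mu}(x)=0$ and each factor $(1-\chi_{B_\mu}(x))$ equals $1$ (the empty product being $1$ when $J_{l_{\nu_0},l_L}^+=\emptyset$); thus this summand is $1$. For any other $\nu\in S(x)$ the minimality gives $\nu_0\prec\nu$, i.e. $l_{\nu_0}\in J_{l_\nu,l_L}^+$, and $\chi_{B_{\nu_0}}(x)=1$ makes the corresponding factor vanish, killing the summand; for $\nu\notin S(x)$ the leading factor $\chi_{B_\nu}(x)=0$ does so. Hence exactly one summand equals $1$ on the union, and the identity follows. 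I expect the only real obstacle to be the bookkeeping in the disjointness step: one must verify that in the bounded-difference case the radii genuinely remain comparable, so that the divergence of the $2^{\dsl{j}{l_\nu}{N(n)}}$-scaled centre distances dominates, and that the threshold in $n$ can be chosen uniformly over the finitely many index pairs. This pointwise argument is self-contained; alternatively the same identity follows from the induction on $L$ announced after~\eqref{202105020090}, since adjoining one further ball enlarges each set $J_{l_\nu,l_L}^+$ by at most one index and the complementary product gains exactly one factor.
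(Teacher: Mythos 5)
Your proof is correct, but it takes a genuinely different route from the paper's. The paper proves the identity by induction on $L$: in the inductive step it partitions the first $L$ indices into $A_1,A_2,A_3$ according to whether $\dsl{j}{l_\nu}{N(n)}-\dsl{j}{l_{L+1}}{N(n)}$ tends to $+\infty$, tends to $-\infty$, or converges, tracks how each set $J^+_{l_\nu,l_{L+1}}$ differs from $J^+_{l_\nu,l_L}$, applies the induction hypothesis twice, and disposes of the remaining cross term by observing that for $l_\nu\in A_3$ the orthogonality condition~\eqref{eq;500506} forces the balls centered at $\dsl{y}{l_\nu}{N(n)}$ and $\dsl{y}{l_{L+1}}{N(n)}$ to be disjoint for large $n$. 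You instead evaluate both sides pointwise: you order the indices by declaring $\mu\prec\nu$ exactly when $l_\mu\in J^+_{l_\nu,l_L}$, show via the same geometric disjointness fact (bounded scale difference together with~\eqref{eq;500506} implies eventually disjoint balls, since the radii then stay comparable while the scaled center distances diverge) that the set of balls containing a given point is totally ordered by $\prec$ for large $n$, and identify the unique surviving summand as the one attached to the $\prec$-minimal, i.e.\ fastest-concentrating, ball containing the point. The two arguments share their only nontrivial analytic ingredient --- eventual disjointness of balls with comparable dilation exponents --- but yours replaces the paper's algebraic induction with a one-shot combinatorial count, and it makes transparent \emph{which} summand equals one at each point, something the induction obscures. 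The mild price is that you must invoke the trichotomy of cases (a)--(c) in assertion (ii) of \cref{theorem;profile-decomp. in Wdmp} to upgrade ``the scale differences are not bounded'' to ``$\mu$ and $\nu$ are $\prec$-comparable''; since that trichotomy is part of the standing assumptions in the proof where the lemma is used, this is legitimate, but it is worth stating explicitly that your argument is carried out on the subsequence where the trichotomy has already been arranged.
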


\begin{proof}
We prove the identity by induction on $L$.

\paragraph{(I) Base step: $L=1$}
In this case, the identity is trivial. 

\paragraph{(II) Inductive step}
Let $L \in \N$ satisfy $L +1  \le \Lambda +1$ and $L+1 <+\infty$. 
Suppose that the identity holds true for all positive integers 
up to $L\in \N$. 
Let $l_\nu \in \NN^{<\Lambda +1} \ (1 \le \nu \le L+1)$ with $l_\nu \neq l_{\nu'} \ (\nu \neq \nu')$.
We use the following partition:
\begin{align*}
&\qty{ l_1, \ldots, l_{L} } 
= A_1 \sqcup A_2 \sqcup A_3, \\ 
&A_1 = 
\qty{ l_\nu \in \NN; \ 1 \le \nu \le L, \ 
\dsl{j}{l_\nu}{N(n)} - \dsl{j}{l_{L+1}}{N(n)} \to +\infty \ (n\toinfty) } \\
&\quad = J_{l_{L+1},l_{L+1}}^+, \\
&A_2 = 
\qty{ l_\nu \in \NN; \ 1 \le \nu \le L, \ 
\dsl{j}{l_\nu}{N(n)} - \dsl{j}{l_{L+1}}{N(n)} \to -\infty \ (n\toinfty) }, \\
&A_3 = 
\qty{ l_\nu \in \NN; \ 1 \le \nu \le L, \ 
\dsl{j}{l_\nu}{N(n)} - \dsl{j}{l_{L+1}}{N(n)} \ 
\mbox{is convergent}  }.
\end{align*}
Then we see that 
\begin{alignat*}{2}
&J_{l_\nu,l_{L+1}}^+ = J_{l_\nu,l_L}^+ 
&\quad &\mbox{if} \ l_\nu \in A_1, \\
&J_{l_\nu,l_{L+1}}^+ = J_{l_\nu,l_L}^+ \sqcup \{l_{L+1}\} 
&\quad &\mbox{if} \ l_\nu \in A_2, \\
&J_{l_\nu,l_{L+1}}^+ = J_{l_\nu,l_L}^+ 
&\quad &\mbox{if} \ l_\nu \in A_3.
\end{alignat*}
It follows that 
\begin{align*}
&\sum_{\nu=1}^{L+1} 
\dsl{\sigma}{l_\nu}{N(n)} \chi_R 
\prod_{l_{\mu} \in J_{l_\nu,l_{L+1}}^+} 
(1- \dsl{\sigma}{l_\mu}{N(n)} \chi_{R} ) \\
&=
\sum_{\nu=1}^{L}
\dsl{\sigma}{l_\nu}{N(n)} \chi_R 
\prod_{l_{\mu} \in J_{l_\nu,l_{L+1}}^+} 
(1- \dsl{\sigma}{l_\mu}{N(n)} \chi_{R} )
\\
&\qquad 
+ \dsl{\sigma}{l_{L+1}}{N(n)} \chi_{R} 
\prod_{l_{\mu} \in J^+_{l_{L+1},l_{L+1}} } 
(1- \dsl{\sigma}{l_\mu}{N(n)} \chi_{R} ) \\
&=
\sum_{l_\nu \in A_1} \dsl{\sigma}{l_\nu}{N(n)} \chi_R 
\prod_{l_{\mu} \in J_{l_\nu,l_{L}}^+} 
(1- \dsl{\sigma}{l_\mu}{N(n)} \chi_{R} )
\\
&\qquad 
+  
\sum_{l_\nu \in A_2} \dsl{\sigma}{l_\nu}{N(n)} \chi_R 
\prod_{l_{\mu} \in J_{l_\nu,l_{L}}^+} 
(1- \dsl{\sigma}{l_\mu}{N(n)} \chi_{R} )
(1- \dsl{\sigma}{l_{L+1}}{N(n)} \chi_{R} ) \\ 
&\qq+  
\sum_{l_\nu \in A_3}  \dsl{\sigma}{l_\nu}{N(n)} \chi_R 
\prod_{l_{\mu} \in J_{l_\nu,l_{L}}^+} 
(1- \dsl{\sigma}{l_\mu}{N(n)} \chi_{R} )
\\
&\qquad 
+
\dsl{\sigma}{l_{L+1}}{N(n)} \chi_{R} 
\prod_{l_{\mu} \in A_1 } 
(1- \dsl{\sigma}{l_\mu}{N(n)} \chi_{R} )
 \\
&=
\sum_{\nu = 1}^L 
\dsl{\sigma}{l_\nu}{N(n)} \chi_R 
\prod_{l_{\mu} \in J_{l_\nu,l_{L}}^+} 
(1- \dsl{\sigma}{l_\mu}{N(n)} \chi_{R} )
\\
&\qquad 
- \dsl{\sigma}{l_{L+1}}{N(n)} \chi_{R} 
\sum_{l_\nu \in A_2} \dsl{\sigma}{l_\nu}{N(n)} \chi_{R} \prod_{l_{\mu} \in J_{l_\nu,l_L}^+} (1- \dsl{\sigma}{l_\mu}{N(n)} \chi_{R} ) \\ 
&\qq+  
\dsl{\sigma}{l_{L+1}}{N(n)}
\chi_{R} \prod_{l_{\mu} \in A_1} (1- \dsl{\sigma}{l_\mu}{N(n)} \chi_{R} ). 
\end{align*}

By the induction hypothesis, the last line turns to:  
\begin{align*}
&=
\sum_{\nu = 1}^L 
\dsl{\sigma}{l_\nu}{N(n)} \chi_R 
\prod_{l_{\mu} \in J_{l_\nu,l_{L}}^+} 
(1- \dsl{\sigma}{l_\mu}{N(n)} \chi_{R} )
\\
&\qquad 
- \dsl{\sigma}{l_{L+1}}{N(n)} \chi_{R} 
\sum_{l_\nu \in A_2} \dsl{\sigma}{l_\nu}{N(n)} \chi_{R} \prod_{l_{\mu} \in J_{l_\nu,l_L}^+} (1- \dsl{\sigma}{l_\mu}{N(n)} \chi_{R} ) \\ 
&\qq+  
\dsl{\sigma}{l_{L+1}}{N(n)}  \chi_{R} 
\l(  1 - 
\sum_{l_\nu \in A_1} \dsl{\sigma}{l_\nu}{N(n)} \chi_{R} \prod_{l_{\mu} \in J_{l_\nu,l_L}^+ \cap A_1} (1- \dsl{\sigma}{l_\mu}{N(n)} \chi_{R} )
\r). 
\end{align*}
It is readily seen that 
$J_{l_\nu,l_L}^+ \cap A_1 = J_{l_\nu,l_L}^+$ 
when $l_\nu \in A_1$, and hence, the last line turns to: 
\begin{align} \label{202105220010}
&=  
\sum_{\nu = 1}^L \dsl{\sigma}{l_\nu}{N(n)} \chi_{R}
\prod_{l_{\mu} \in J_{l_\nu,l_L}^+} (1- \dsl{\sigma}{l_\mu}{N(n)} \chi_{R} ) \\ 
&\notag \quad + \dsl{\sigma}{l_{L+1}}{N(n)}  \chi_{R}
\l( 1 - 
\sum_{l_\nu \in A_1 \sqcup A_2 \sqcup A_3} \dsl{\sigma}{l_\nu}{N(n)} \chi_{R} 
\prod_{l_{\mu} \in J_{l_\nu,l_L}^+} (1- \dsl{\sigma}{l_\mu}{N(n)} \chi_{R} ) 
\r) \\
&\notag \qquad + \dsl{\sigma}{l_{L+1}}{N(n)}
\chi_{R}
\sum_{l_\nu \in A_3} \dsl{\sigma}{l_\nu}{N(n)} \chi_{R} \prod_{l_{\mu} \in J_{l_\nu, l_L}^+} (1- \dsl{\sigma}{l_\mu}{N(n)} \chi_{R} ).
\end{align}
Whenever $l_\nu \in A_3$, 
the mutual orthogonality condition~\eqref{eq;500506} implies that the supports of $\dsl{\sigma}{l_{L+1}}{N(n)}\chi_{R}$ and $\dsl{\sigma}{l_\nu}{N(n)} \chi_{R}$ are mutually disjoint for sufficiently large $n$, and thus 
\begin{equation*}
\dsl{\sigma}{l_{L+1}}{N(n)} \chi_{R}
\sum_{l_\nu \in A_3} \dsl{\sigma}{l_\nu}{N(n)}\chi_{R} \prod_{l_{\mu} \in J_{l_\nu,l_L}^+} (1- \dsl{\sigma}{l_\mu}{N(n)} \chi_{R} )
=0
\end{equation*}
for sufficiently large $n$. 

Therefore, from the above and the induction hypothesis again, 
\eqref{202105220010} leads to: 
\begin{align*}
&=  
\sum_{\nu = 1}^L \dsl{\sigma}{l_\nu}{N(n)}\chi_{R} \prod_{l_{\mu} \in J_{l_\nu,l_L}^+} (1- \dsl{\sigma}{l_\mu}{N(n)} \chi_{R} )
\\
&\qquad 
+ \dsl{\sigma}{l_{L+1}}{N(n)} \chi_{R}
\l( 1 - 
\sum_{\nu =1}^L \dsl{\sigma}{l_\nu}{N(n)} \chi_{R} 
\prod_{l_{\mu} \in J_{l_\nu,l_L}^+} (1- \dsl{\sigma}{l_\mu}{N(n)} \chi_{R} ) 
\r) \\
&=
(1 - \dsl{\sigma}{l_{L+1}}{N(n)} \chi_{R} )
\sum_{\nu =1}^L \dsl{\sigma}{l_\nu}{N(n)} \chi_{R}
\prod_{l_{\mu} \in J_{l_\nu,l_L}^+} (1- \dsl{\sigma}{l_\mu}{N(n)} \chi_{R} )
\\
&\qquad 
+ \dsl{\sigma}{l_{L+1}}{N(n)} \chi_{R} \\
&=
(1 - \dsl{\sigma}{l_{L+1}}{N(n)} \chi_{R} )
\l( 1 - \prod_{\nu=1}^L ( 1- \dsl{\sigma}{l_\mu}{N(n)} \chi_{R} )   \r)
+ \dsl{\sigma}{l_{L+1}}{N(n)} \chi_{R} \\
&=
1 - \prod_{\nu=1}^{L+1} ( 1- \dsl{\sigma}{l_\mu}{N(n)} \chi_{R} )
\end{align*}
for sufficiently large $n$.
This completes the proof. 
\end{proof}

\begin{lemma}[Proofs  of~\eqref{202105110010},~\eqref{202105090040} and~\eqref{202105090041}]
\label{lemma;calc in proof of claim E}
Under the same settings as in the proof of \cref{lemma;energy estim Wdmp}, 
it holds that 
\begin{align}
&\label{202105100010}
\prod_{l' \in \Jlp } (1-[\dsl{\sigma}{l}{N(n)}]^{-1}
\dsl{\sigma}{l'}{N(n)} \chi_{R}) \to 1 \s \mbox{in} \ L^q(\RN) 
\ ( q \in [1,+\infty[ ), \\ 
&\label{202105100020}
\int_{\RN} | \d^\alpha \bbl{w}{l} |^p 
 \chi_{R} \prod_{l' \in \Jlp }
 (1-[\dsl{\sigma}{l}{N(n)}]^{-1}
\dsl{\sigma}{l'}{N(n)} \chi_{R})  \,\dd x
 \to \int_{\RN} |\d^\alpha \bbl{w}{l} |^p  \chi_{R} \,\dd x, 
\\
&\label{202105100030}
\int_{\RN} |\d^\alpha \bbl{w}{l} |^{p-2} 
 \d^\alpha \bbl{w}{l} (\d^\alpha ( [ \dsl{g}{l}{N(n)}]^{-1} u_{N(n)} ) - \d^\alpha \bbl{w}{l}  )  
 \\
 &\notag \hspace{3cm}
 \times \chi_{R} \prod_{l' \in \Jlp }
 (1-[\dsl{\sigma}{l}{N(n)}]^{-1}
\dsl{\sigma}{l'}{N(n)} \chi_{R}) \,\dd x
 \to 0,
\end{align}
as $n\toinfty$.
\end{lemma}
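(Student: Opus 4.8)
The plan is to reduce all three assertions to a single measure-theoretic fact: the balls carrying the factors indexed by $\Jlp$ shrink to points, so that the product collapses to $1$ off a set of vanishing measure. Throughout, fix $l$ with $0 \le l \le L$ and the multi-index $\alpha$, and for $l' \in \Jlp$ write $B_{l',n}$ for the ball $B(2^{\dsl{j}{l}{N(n)}}(\dsl{y}{l'}{N(n)} - \dsl{y}{l}{N(n)}), 2^{-(\dsl{j}{l'}{N(n)} - \dsl{j}{l}{N(n)})}R)$ on which the characteristic function $[\dsl{\sigma}{l}{N(n)}]^{-1}\dsl{\sigma}{l'}{N(n)}\chi_R$ is supported. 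Since $l' \in \Jlp$ means $\dsl{j}{l'}{N(n)} - \dsl{j}{l}{N(n)} \to +\infty$, the radius $2^{-(\dsl{j}{l'}{N(n)} - \dsl{j}{l}{N(n)})}R \to 0$, so $|B_{l',n}| \to 0$; as $\Jlp$ is a finite subset of $\{0,\ldots,L\}$, the set $E_n := \bigcup_{l' \in \Jlp} B_{l',n}$ satisfies $|E_n| \to 0$ as $n \toinfty$. I would also record the elementary identity $\prod_{l' \in \Jlp}(1 - \chi_{B_{l',n}}) = 1 - \chi_{E_n}$, which holds because a finite product $\prod_i (1 - \chi_{A_i})$ is the characteristic function of $\bigcap_i A_i^c = (\bigcup_i A_i)^c$.

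With this in hand, \eqref{202105100010} is immediate: for $q \in [1,\infty[$ one has $\|\prod_{l'\in\Jlp}(1 - [\dsl{\sigma}{l}{N(n)}]^{-1}\dsl{\sigma}{l'}{N(n)}\chi_R) - 1\|_{L^q(\RN)}^q = \|\chi_{E_n}\|_{L^q(\RN)}^q = |E_n| \to 0$. For \eqref{202105100020}, the same identity shows that the difference of the two integrals equals $-\int_{E_n} |\d^\alpha \bbl{w}{l}|^p \chi_R \dx$; since $\bbl{w}{l} \in \Wdmp$ gives $|\d^\alpha \bbl{w}{l}|^p \chi_R \in L^1(\RN)$ and $|E_n| \to 0$, the absolute continuity of the Lebesgue integral forces this to vanish.

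The one genuinely delicate point is \eqref{202105100030}, since there the weakly null sequence $v_n := \d^\alpha([\dsl{g}{l}{N(n)}]^{-1}u_{N(n)}) - \d^\alpha \bbl{w}{l}$ — which tends to $0$ weakly in $L^p(\RN)$ because $[\dsl{g}{l}{N(n)}]^{-1}u_{N(n)} \rightharpoonup \bbl{w}{l}$ in $\Wdmp$ by \eqref{202105210050} and $u \mapsto \d^\alpha u$ is weakly continuous — is tested against a factor that itself depends on $n$. I would resolve this by splitting the test function: set $\psi := |\d^\alpha \bbl{w}{l}|^{p-2}\d^\alpha \bbl{w}{l}\chi_R$, so the integral in question is $\int_{\RN} v_n \psi \dx + \int_{\RN} v_n (\psi_n - \psi) \dx$ with $\psi_n := \psi \prod_{l'\in\Jlp}(1 - [\dsl{\sigma}{l}{N(n)}]^{-1}\dsl{\sigma}{l'}{N(n)}\chi_R)$. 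The first term tends to $0$ because $\psi \in L^{p'}(\RN)$ — its modulus $|\d^\alpha \bbl{w}{l}|^{p-1}$ is $p'$-integrable, as $(p-1)p' = p$ and $\d^\alpha \bbl{w}{l} \in L^p(\RN)$ — while $v_n \rightharpoonup 0$ in $L^p(\RN)$. For the second term, the identity above gives $\psi_n - \psi = -\psi\chi_{E_n}$, whence by Hölder $|\int_{\RN} v_n(\psi_n - \psi)\dx| \le \|v_n\|_{L^p(\RN)}(\int_{E_n} |\d^\alpha \bbl{w}{l}|^p \chi_R \dx)^{1/p'}$; the first factor is bounded because weakly convergent sequences are bounded, and the second tends to $0$ again by absolute continuity. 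This separation of a fixed $L^{p'}$ test function from the shrinking error is the crux of the argument, and once it is in place \eqref{202105100030} follows.
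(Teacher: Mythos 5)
Your proof is correct, and while it rests on the same geometric observation as the paper's argument --- namely that for $l' \in \Jlp$ the ball carrying $[\dsl{\sigma}{l}{N(n)}]^{-1}\dsl{\sigma}{l'}{N(n)}\chi_R$ has radius $2^{-(\dsl{j}{l'}{N(n)}-\dsl{j}{l}{N(n)})}R \to 0$ --- the execution is genuinely different and arguably cleaner. For~\eqref{202105100010} the paper expands the product and majorizes each of the finitely many resulting terms by the $L^q$-norm of a single shrinking characteristic function (invoking, in passing, the mutual orthogonality condition); you instead use the exact set identity $\prod_{l'}(1-\chi_{B_{l',n}}) = 1 - \chi_{E_n}$ with $E_n = \bigcup_{l'\in\Jlp} B_{l',n}$ and $|E_n|\to 0$, which yields the same conclusion in one line and, more importantly, provides the pointwise identity that drives your remaining two steps. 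For~\eqref{202105100020} and~\eqref{202105100030} the paper argues by density: it approximates $\d^\alpha\bbl{w}{l}$ (respectively $|\d^\alpha\bbl{w}{l}|^{p-2}\d^\alpha\bbl{w}{l}$) by smooth functions and then combines~\eqref{202105100010} with the weak convergence $\d^\alpha([\dsl{g}{l}{N(n)}]^{-1}u_{N(n)}) \rightharpoonup \d^\alpha\bbl{w}{l}$ in $L^p(\RN)$. You avoid the approximation layer entirely: absolute continuity of the integral of the fixed $L^1$-function $|\d^\alpha\bbl{w}{l}|^p\chi_R$ over the small set $E_n$ disposes of~\eqref{202105100020}, and your splitting $\psi_n = \psi - \psi\chi_{E_n}$, with $\psi = |\d^\alpha\bbl{w}{l}|^{p-2}\d^\alpha\bbl{w}{l}\chi_R \in L^{p'}(\RN)$ fixed, reduces~\eqref{202105100030} to weak convergence tested against $\psi$ plus a H\"older estimate on the error term, again killed by absolute continuity. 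What your route buys: no density argument, a fully self-contained measure-theoretic proof, and the observation that the mutual orthogonality of the translation centers is not actually needed here --- only the scale divergence $\dsl{j}{l'}{N(n)}-\dsl{j}{l}{N(n)}\toinfty$ matters, since the measure of a shrinking ball does not depend on where it sits. What the paper's route buys is mainly brevity of exposition, at the cost of leaving the approximation steps to the reader.
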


\begin{proof}
When $\Jlp = \emptyset$, we always assume that $\prod_{l' \in \Jlp }
(1-[\dsl{\sigma}{l}{N(n)}]^{-1} \dsl{\sigma}{l'}{N(n)} \chi_{R}) \equiv 1$, and so the assertions are all trivial. 
Hence we assume $\Jlp \neq \emptyset$.

We shall prove~\eqref{202105100010}. 
Expanding the products and calculating $1-\prod_{l' \in \Jlp }(1-[\dsl{\sigma}{l}{N(n)}]^{-1} \dsl{\sigma}{l'}{N(n)} \chi_{R})$, 
one sees that each term of the summation contains at least one 
$[\dsl{\sigma}{l}{N(n)}]^{-1} \dsl{\sigma}{l'}{N(n)} \chi_{R}$ for some $l' \in \Jlp$.
Hence the $L^q$-norm of each term is majorized by the $L^q$-norm of such  $[\dsl{\sigma}{l}{N(n)}]^{-1} \dsl{\sigma}{l'}{N(n)} \chi_{R}$ that is infinitesimal as $n\toinfty$, thanks to 
$l' \in \mathcal{J}_{l,L}^+$ and the mutual orthogonality condition. 
Hence~\eqref{202105100010} follows. 

We next show~\eqref{202105100020}. By approximation, we may assume that 
$\d^\alpha\bbl{w}{l}$ is smooth. Then thanks to~\eqref{202105100010}, 
one gets~\eqref{202105100020}.
Regarding~\eqref{202105100030}, we shall approximate 
$|\d^\alpha\bbl{w}{l}|^{p-2}\d^\alpha\bbl{w}{l}$ by a smooth function. 
Then~\eqref{202105100030} follows from 
\begin{align*}
    \d^\alpha [\dsl{g}{l}{N(n)}]^{-1}u_{N(n)}\to\d^\alpha\bbl{w}{l} \s \mbox{weakly in}\ L^p(\RN)
\end{align*}
and~\eqref{202105100010}. 
\end{proof}

Eventually, the proof of \cref{theorem;profile-decomp. in Wdmp} has been complete. 
\qed 

\subsection{Relations among profile decompositions in inhomogeneous and homogeneous Sobolev spaces}\label{section:relations}

Fix $1<p<N/m$, and recall that 
$\Wmp$ is continuously embedded into $\Wdmp$ and that 
$\Wmp\cong\Wdmp\cap L^p(\RN)$.
Now consider that 
\begin{itemize}
\item $(u_n)$ is a bounded sequence in the \emph{inhomogeneous} Sobolev space $\Wmp$.
\end{itemize}
Then there are two types of profile decomposition of $(u_n)$: 
\begin{enumerate}
\item 
the profile decomposition in $\Wmp$ given by~\cite[Theorem~3.8]{Okumura2};

\item 
the profile decomposition in $\Wdmp$ given by \cref{theorem;profile-decomp. in Wdmp}. 
\end{enumerate}
In this section, we shall discuss the relations between the above two. 

\medskip 

Recall that, by~\cref{theorem;profile-decomp. in Wdmp}, there exist:
\begin{itemize}
\setlength{\itemsep}{0mm}
\setlength{\parskip}{0mm}

\item a subsequence of $(n)$, still denoted by $n$, 
\item a number $\Lambda\in\N\cup\qty{0,+\infty}$, 
\item profiles $\bbl{w}{l}\in\Wdmp$, $l\in\NN^{<\Lambda+1}$, 
\item dislocations $(\dsl{y}{l}{n},\dsl{j}{l}{n})\in\RN\times\R$, $l\in\NN^{<\Lambda+1},\ n\in\Z_{\ge l}$, 
\item residual terms $r^L_n\in\Wdmp$, $L\in\NN^{<\Lambda+1}, \ n\in\Z_{\ge L}$, 
\end{itemize}
with the relation of a double-suffix profile decomposition 
\begin{align*}
u_n = \sum_{l=0}^L 2^{\dsl{j}{l}{n}N/\ppm} \bbl{w}{l} \qty( 2^{\dsl{j}{l}{n}}(\cdot-\dsl{y}{l}{n}) ) + r^L_n, \quad 
L\in\NN^{<\Lambda+1}, \ n\in\Z_{\ge L}, 
\end{align*}
such that all relations in~\cref{theorem;profile-decomp. in Wdmp} hold true.
Furthermore, we may assume that 
the index set $\NN^{<\Lambda+1}$ is decomposed into disjoint sets as
\begin{align*}
&\NN^{<\Lambda+1}=J_+\sqcup J_0\sqcup J_-,
\\
&J_+\coloneqq \qty{l\in\NN^{<\Lambda+1};\ \dsl{j}{l}{n}\to+\infty}, \\
&J_0\coloneqq \qty{l\in\NN^{<\Lambda+1};\ \dsl{j}{l}{n}\to\dsl{j}{l}{\infty}}, \\
&J_-\coloneqq \qty{l\in\NN^{<\Lambda+1};\ \dsl{j}{l}{n}\to-\infty}. 
\end{align*}

Under the above situation, we can show the following
\begin{lemma}
\begin{enumerate}
\item 
$J_- = \emptyset$. 

\item 
If $l\in J_0$, then $\bbl{w}{l}\in\Wmp$ and moreover, without loss of generality, one may assume that $\dsl{j}{l}{\infty}=\dsl{j}{l}{n}=0$ for all $n\in\Z_{\ge l}$. 

\end{enumerate}
\end{lemma}

\begin{proof}
We shall first show the assertion~(i). 
Suppose that $J_-\neq\emptyset$ and let $l\in J_-$. It then follows that for every $e\in C^\infty_c(\RN)$,
\begin{align*}
\intrn 2^{-\dsl{j}{l}{n}N/\ppm} u_n \qty( 2^{-\dsl{j}{l}{n}}\cdot+\dsl{y}{l}{n} ) e \,\dd x \to 
\intrn \bbl{w}{l} e \,\dd x \quad (n\toinfty).
\end{align*}
On the other hand, one sees that 
\begin{align*}
\abs{
\intrn 2^{-\dsl{j}{l}{n}N/\ppm} u_n \qty( 2^{-\dsl{j}{l}{n}}\cdot+\dsl{y}{l}{n} ) e \,\dd x
}
&\le 
2^{-\dsl{j}{l}{n}} 
\intrn \qty| 2^{-\dsl{j}{l}{n}N/p} u_n \qty( 2^{-\dsl{j}{l}{n}}\cdot+\dsl{y}{l}{n} ) e | \,\dd x
\\
&\le 
2^{-\dsl{j}{l}{n}} 
\norm{u_n}_{\leb{p}{\RN} }\norm{e}_{\leb{p'}{\RN}} 
\\
&=o(1)
\end{align*}
as $n\toinfty$. 
Thus from the density argument we conclude that $\bbl{w}{l}=0$. 
But this contradicts the fact that $\bbl{w}{l}\neq 0$ for every $l\in\NN^{<\Lambda+1}$, hence the conclusion.

We then prove~(ii). 
Let $l\in J_0$. 
Recall that 
\begin{equation*}
    2^{-\dsl{j}{l}{n}N/\ppm}u_n(2^{-\dsl{j}{l}{n}}\cdot+\dsl{y}{l}{n})
    \to\bbl{w}{l}\ \mbox{weakly in}\ \Wdmp. 
\end{equation*}
Since $\dsl{j}{l}{n}\to\dsl{j}{l}{\infty}$, one sees from changing variables that 
\begin{align*}
2^{-(\dsl{j}{l}{n}-\dsl{j}{l}{\infty})N/\ppm}u_n(2^{-(\dsl{j}{l}{n}-\dsl{j}{l}{\infty})}\cdot+\dsl{y}{l}{n})
\to 2^{\dsl{j}{l}{\infty}M/\ppm}\bbl{w}{l} \qty(2^{\dsl{j}{l}{\infty}} \cdot )
\\
\mbox{weakly in}\ \Wdmp. 
\end{align*}
Thus with no loss of generality, we assume that 
$\dsl{j}{l}{\infty}=0$ for each $l\in J_0$ and $\dsl{j}{l}{n}\to 0$.

Take $\phi\in \qty[\Wdmp]^*$ arbitrarily, and let $(\phi_\alpha)_{\alpha}\subset\leb{p}{\RN}$ be its Riesz representation given by~\cref{lemma;dual Wdmp}. 
It then follows that 
\begin{align*}\eqntag\label{202201240010}
\sum_{|\alpha|=m}
\intrn \partial_\alpha \qty( 2^{-\dsl{j}{l}{n}N/\ppm} u_n \qty( 2^{-\dsl{j}{l}{n}} \cdot +\dsl{y}{l}{n} ) ) \phi_\alpha \,\dd x
\to 
\sum_{|\alpha|=m}
\intrn \partial_\alpha \bbl{w}{l} \phi_\alpha \,\dd x
\end{align*}
as $n\toinfty$. 
On the other hand, it follows that 
\begin{align*}\eqntag\label{202201240020}
&\abs{
\intrn \partial_\alpha \qty( 2^{-\dsl{j}{l}{n}N/\ppm} u_n \qty( 2^{-\dsl{j}{l}{n}}\cdot+\dsl{y}{l}{n} ) ) \phi_\alpha \,\dd x 
-
\intrn \partial_\alpha  u_n \qty( \cdot+\dsl{y}{l}{n} )  \phi_\alpha \,\dd x 
}
\\
&\le 
\intrn 
\abs{
\partial_\alpha u_n (\cdot+\dsl{y}{l}{n})
}
\abs{
2^{\dsl{j}{l}{n}N/\ppm}\phi_\alpha (2^{\dsl{j}{l}{n}}\cdot)\phi_\alpha
-\phi_\alpha
}\,\dd x
\\
&\le 
\norm{u_n}_{\Wdmp}
\norm{
2^{\dsl{j}{l}{n}N/\ppm}\phi_\alpha (2^{\dsl{j}{l}{n}}\cdot)\phi_\alpha
-\phi_\alpha
}_{\leb{p'}{\RN}}
\\
&=o(1)
\end{align*}
as $n\toinfty$. 

From~\eqref{202201240010} and~\eqref{202201240020}, we infer that 
\begin{align*}
\sum_{|\alpha|=m}
\intrn \partial_\alpha u_n \qty( 2^{-\dsl{j}{l}{n}} \cdot +\dsl{y}{l}{n}) \phi_\alpha \,\dd x
\to 
\sum_{|\alpha|=m}
\intrn \partial_\alpha \bbl{w}{l} \phi_\alpha \,\dd x
\end{align*}
as $n\toinfty$, which implies that 
\begin{align*}
u_n (\cdot+\dsl{y}{l}{n}) \to \bbl{w}{l} \s \text{weakly in}\ \Wdmp.
\end{align*}

Also, from the following lemma, one finds that 
 $\bbl{w}{l}\in\Wmp$ (not just $\Wdmp$) for all $l\in J_0$, 
 and thus the proof is complete. 
\end{proof}

\begin{lemma}
Let $u_n\in\Wmp$ $(n\in\N)$ be such that 
$u_n\to u$ weakly in $\Wdmp$ (not $\Wmp$)
for some $u\in\Wdmp$. 
If $\sup_{n\in\N}\|u_n\|_{\Wmp}<\infty$, then 
$u\in\Wmp.$
\end{lemma}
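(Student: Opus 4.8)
The plan is to leverage the reflexivity of the inhomogeneous space together with the continuity of the embedding $\Wmp\hookrightarrow\Wdmp$, and then identify the two weak limits. The point is that the extra hypothesis $\sup_n\|u_n\|_{\Wmp}<\infty$ produces, on a subsequence, a weak limit that \emph{lives in} $\Wmp$, and a soft topological argument forces this limit to coincide with $u$.

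First I would observe that since $1<p<N/m$ in particular $1<p<\infty$, the space $\Wmp$ is reflexive: the map $u\mapsto(\d^\alpha u)_{|\alpha|\le m}$ realizes $\Wmp$ as a closed subspace of the reflexive space $\prod_{|\alpha|\le m}L^p(\RN)$, and closed subspaces of reflexive spaces are reflexive. The assumption $\sup_n\|u_n\|_{\Wmp}<\infty$ then permits, via the Eberlein--\v{S}mulian theorem, extraction of a subsequence $(u_{n_j})$ and an element $v\in\Wmp$ with $u_{n_j}\rightharpoonup v$ weakly in $\Wmp$.

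Next I would identify $v$ with $u$. The inclusion $\iota:\Wmp\hookrightarrow\Wdmp$ is a bounded linear operator, because the homogeneous norm is dominated by the inhomogeneous one; and every bounded linear operator between Banach spaces is continuous from the weak topology to the weak topology. Hence $u_{n_j}\rightharpoonup v$ weakly in $\Wmp$ yields $u_{n_j}\rightharpoonup v$ weakly in $\Wdmp$. On the other hand, the hypothesis $u_n\rightharpoonup u$ weakly in $\Wdmp$ passes to the subsequence, so also $u_{n_j}\rightharpoonup u$ weakly in $\Wdmp$. Since weak limits in the normed space $\Wdmp$ are unique (its dual separates points by Hahn--Banach), we conclude $u=v$, whence $u=v\in\Wmp$.

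The argument is essentially formal, so the main obstacle is not an estimate but a matter of making the identification $u=v$ rigorous as an identity of genuine functions rather than of abstract norm-limit classes. This is precisely where the standing assumption $p<N/m$ enters: as recalled after \cref{definition;dislocation of Sobolev}, it guarantees the injection $\Wdmp\hookrightarrow\Lloc$, so $\Wdmp$ is a genuine space of locally integrable functions with no quotient by polynomials, its dual separates points, and the equality $u=v$ obtained above is a bona fide a.e.\ identity. With this bookkeeping in place the proof is complete.
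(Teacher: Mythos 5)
Your proof is correct, but it follows a genuinely different route from the paper's. The paper argues through $L^p(\RN)$: it reduces the claim to showing $u_n\rightharpoonup u$ weakly in $L^p(\RN)$, obtains pointwise a.e.\ convergence $u_n\to u$ (up to a subsequence) from the weak convergence in $\Wdmp$ via the local compact Sobolev embeddings, and then invokes the standard fact that a sequence bounded in $L^p(\RN)$ ($1<p<\infty$) converging a.e.\ converges weakly in $L^p(\RN)$; the conclusion follows from $\Wmp\cong\Wdmp\cap L^p(\RN)$. You instead run a purely soft functional-analytic argument: reflexivity of $\Wmp$ gives a subsequence $u_{n_j}\rightharpoonup v$ weakly in $\Wmp$ with $v\in\Wmp$, weak-to-weak continuity of the bounded inclusion $\Wmp\hookrightarrow\Wdmp$ transports this to $u_{n_j}\rightharpoonup v$ in $\Wdmp$, and uniqueness of weak limits forces $u=v$. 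Your version avoids Rellich--Kondrachov and pointwise convergence altogether, and it proves a more general abstract statement (any reflexive $X$ continuously embedded in a normed space $Y$: a sequence bounded in $X$ and weakly convergent in $Y$ has its limit in $X$); the paper's version stays closer to the concrete function-space toolkit used throughout the rest of the argument and yields, as a by-product, the weak $L^p$ convergence of the sequence itself, which is in the spirit of how the surrounding section manipulates profiles. One small point: your justification of the inclusion $\Wmp\hookrightarrow\Wdmp$ (``the homogeneous norm is dominated by the inhomogeneous one'') only gives boundedness once one knows $\Wmp\subset\Wdmp$ as sets, which is not immediate from the definition of $\Wdmp$ as a completion of $C_c^\infty(\RN)$; however, the paper explicitly recalls this embedding (together with $\Wmp\cong\Wdmp\cap L^p(\RN)$) as a standing fact at the start of the subsection, so this is a citation matter rather than a gap.
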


\begin{proof}
It suffices to show that $u_n\to u$ weakly in $L^p(\RN)$. 
This follows from the boundedness of $(u_n)$ in $L^p(\RN)$ and the pointwise convergence $u_n\to u$ a.e. in $\RN$ (up to a subsequence) which follows from the local compact embeddings for Sobolev spaces. 
\end{proof}

By the same calculations as in the proof of the energy decomposition for the inhomogeneous Sobolev norm (see also~\cite{Okumura2}), one can show that
\begin{equation*}
    \varlimsup_{n\toinfty}\|u_n\|_{\Wmp}^p
    \ge \sum_{l\in J_0}\|\bbl{w}{l}\|_{\Wmp}^p.
\end{equation*}
Also, we find that 
\begin{equation*}
    \lim_{L\to\Lambda}\sup_{\phi\in B_{[\Wmp]^*}(1)}\varlimsup_{n\toinfty}\sup_{y\in\RN}
    \abs{\la \phi, \rho^L_n(\cdot+y) \ra_{\Wmp}}
    =0,
\end{equation*}
where $\rho^L_n=u_n-\sum_{l=0}^L\bbl{w}{i_l}(\cdot-\dsl{y}{i_l}{n})$ and 
$J_0=\{0<i_1<i_2<\cdots\}$.
This can be proved by the same argument as in~\cite[Lemma~3.12]{Okumura2}.

Therefore, $\bbl{w}{l}$ and $\dsl{y}{l}{n}$ ($l\in J_0$, $n\in\N_{\ge l}$) are profile elements for the profile decomposition of $(u_n)$ in $\Wmp$,
and assertions as in~\cite[Theorem~3.8]{Okumura2} hold.
In short, the profile decompositions in $\Wdmp$ of bounded sequences in $\Wmp$ include the profile decompositions in $\Wmp$ of $(u_n)$. 

\medskip 

Summing up the above arguments, we obtain the following 
\begin{theorem}
Let $(u_n)$ be a bounded sequence in $\Wmp$. Then there exist
 a subsequence of $(n)$, still denoted by $n$, 
 a number $\Lambda\in\N\cup\qty{0,+\infty}$, 
 profiles $\bbl{w}{l}\in\Wdmp$ $(l\in\NN^{<\Lambda+1})$, 
 dislocations $(\dsl{y}{l}{n},\dsl{j}{l}{n})\in\RN\times\R$ $(l\in\NN^{<\Lambda+1}, \ n\in\Z_{\ge l})$, 
 and residual terms $r^L_n\in\Wdmp$ $(L\in\NN^{<\Lambda+1}, \ n\in\Z_{\ge L})$, 
with the relation of a double-suffix profile decomposition 
\begin{align*}
u_n = \sum_{l=0}^L 2^{\dsl{j}{l}{n}N/\ppm} \bbl{w}{l} \qty(2^{\dsl{k}{l}{n}} (\cdot-\dsl{y}{l}{n}) ), \quad 
L\in\NN^{<\Lambda+1},\ n\in\Z_{\ge L}, 
\end{align*}
such that assertions as in~\cref{theorem;profile-decomp. in Wdmp} hold true. 
Moreover, set 
\begin{align*}
   &J_+\coloneqq \{l\in\NN^{<\Lambda+1};\ \dsl{j}{l}{n}\to+\infty\}, \\
   &J_0\coloneqq \{l\in\NN^{<\Lambda+1};\ \dsl{j}{l}{n}\to\dsl{j}{l}{\infty}\}, \\
   &J_-\coloneqq \{l\in\NN^{<\Lambda+1};\ \dsl{j}{l}{n}\to-\infty\}, \\
   &\NN^{<\Lambda+1}=J_+\sqcup J_0\sqcup J_-.
\end{align*}
Then, 
\begin{align*}
&J_-=\emptyset, \\ 
&\begin{alignedat}{2}
    &\dsl{j}{l}{n}=\dsl{j}{l}{\infty}=0 &\quad &(l\in J_0), \\ 
    &\bbl{w}{l}\in\Wmp &\quad &(l\in J_0), \\
    &u_n(\cdot+\dsl{y}{l}{n})\to\bbl{w}{l} &\quad &\mbox{weakly in}\ \Wmp\ (l\in J_0),\\
\end{alignedat}\\
    &\varlimsup_{n\toinfty}\|u_n\|_{\Wmp}^p
    \ge \sum_{l\in J_0}\|\bbl{w}{l}\|_{\Wmp}^p, \\
    &\lim_{L\to\Lambda}\sup_{\phi\in B_{[\Wmp]^*}(1)}\varlimsup_{n\toinfty}\sup_{y\in\RN}
    \abs{\la \phi, \rho^L_n(\cdot+y)  \ra _{\Wmp}}
    =0, \\ 
    &\lim_{L\to\Lambda}\varlimsup_{n\toinfty}\|\rho^L_n\|_{L^q(\RN)}=0,\quad q\in]p,\ppm[,
\end{align*}
where $\rho^L_n\coloneqq u_n-\sum_{l=0}^L\bbl{w}{i_l}(\cdot-\dsl{y}{i_l}{n})$ and 
$J_0=\{0<i_1<i_2<\cdots\}$.
\end{theorem}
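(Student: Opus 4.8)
The plan is to read the statement off the homogeneous profile decomposition of \cref{theorem;profile-decomp. in Wdmp} together with the analysis of the present subsection, by isolating those profiles that survive in the inhomogeneous norm. First I would regard $(u_n)$ as a bounded sequence in $\Wdmp$ through the continuous embedding $\Wmp\hookrightarrow\Wdmp$ and apply \cref{theorem;profile-decomp. in Wdmp}, obtaining profiles $\bbl{w}{l}$, translations $\dsl{y}{l}{n}$ and dilations $\dsl{j}{l}{n}$ realizing all assertions there. Passing to a further subsequence (diagonally over the countably many $l$), I would impose the scalar trichotomy on each scale sequence $(\dsl{j}{l}{n})_n$ (convergent, $\toinfty$, or $\to-\infty$), which produces the disjoint splitting $\NN^{<\Lambda+1}=J_+\sqcup J_0\sqcup J_-$; note $0\in J_0$ since $\dsl{j}{0}{n}=0$.

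Next I would normalize the scale of the $J_0$ profiles to zero. For $l\in J_0$ the integer sequence $\dsl{j}{l}{n}$ is eventually constant equal to $\dsl{j}{l}{\infty}$, so performing the change of variables in assertion~(iii) of \cref{theorem;profile-decomp. in Wdmp} and absorbing the fixed dilation into the profile (replacing $\bbl{w}{l}$ by $2^{\dsl{j}{l}{\infty}N/\ppm}\bbl{w}{l}(2^{\dsl{j}{l}{\infty}}\cdot)$) lets me assume $\dsl{j}{l}{n}=\dsl{j}{l}{\infty}=0$ and $u_n(\cdot+\dsl{y}{l}{n})\to\bbl{w}{l}$ weakly and a.e. in $\Wdmp$. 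Specializing the mutual orthogonality~\eqref{eq;500506} to two indices of $J_0$ (whose scales both tend to $0$) forces $|\dsl{y}{l}{n}-\dsl{y}{k}{n}|\toinfty$ for $l\neq k\in J_0$. Since translations are isometries of $\Wmp$, each $u_n(\cdot+\dsl{y}{l}{n})$ is bounded in $\Wmp$, so the (unlabeled) lemma preceding this theorem yields $\bbl{w}{l}\in\Wmp$ and in fact upgrades the convergence to weak convergence in $\Wmp$, the missing $L^p$-component coming from $L^p$-boundedness together with a.e. convergence.

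The principal point is the inhomogeneous energy estimate $\varlimsup_{n\toinfty}\|u_n\|_{\Wmp}^p\ge\sum_{l\in J_0}\|\bbl{w}{l}\|_{\Wmp}^p$, and I expect this to be the main obstacle. I would rerun the convexity argument of \cref{lemma;energy estim Wdmp}, but the situation simplifies decisively: all surviving profiles sit at the common scale $\dsl{j}{l}{n}=0$, so the scale-trichotomy identity of \cref{lemma;cut off induction} degenerates and one only needs the pure translation cut-offs $\chi_R(\cdot-\dsl{y}{l}{n})$, which are asymptotically disjoint because $|\dsl{y}{l}{n}-\dsl{y}{k}{n}|\toinfty$. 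Since we are after a lower bound, the profiles in $J_+\cup J_-$ need not be controlled at all: on each ball $B(\dsl{y}{l}{n},R)$ the convexity of $t\mapsto|t|^p$, combined with the weak $L^p$-convergence $\d^\alpha u_n(\cdot+\dsl{y}{l}{n})\to\d^\alpha\bbl{w}{l}$, gives $\int_{B(\dsl{y}{l}{n},R)}|\d^\alpha u_n|^p\ge\int_{B(0,R)}|\d^\alpha\bbl{w}{l}|^p+o(1)$ regardless of any other mass in $u_n$; summing over the disjoint $J_0$-balls, over all $|\alpha|\le m$ (lower-order terms now contributing, as the $J_0$ profiles lie in $\Wmp$), and letting $R\toinfty$ yields the claim, exactly as in the energy decomposition for the inhomogeneous norm in \cite{Oku}.

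Finally I would treat the residual. Writing $J_0=\{0=i_0<i_1<i_2<\cdots\}$ and $\rho^L_n=u_n-\sum_{l=0}^L\bbl{w}{i_l}(\cdot-\dsl{y}{i_l}{n})$, I expect the translation-$G$-weak vanishing $\lim_{L\to\Lambda}\sup_{\phi\in B_{[\Wmp]^*}(1)}\varlimsup_{n\toinfty}\sup_{y\in\RN}|\la\phi,\rho^L_n(\cdot+y)\ra_{\Wmp}|=0$ to follow by the argument of \cite[Lemma~3.12]{Oku}. The $L^q$-vanishing for $q\in]p,\ppm[$ then drops out of this $G$-weak vanishing together with the $G$-complete continuity of the subcritical embedding $\Wmp\hookrightarrow L^q(\RN)$ (the inhomogeneous analogue of \cref{lemma;G-complete continuity of Wdmp}), through \cref{theorem;weak G-comp conti}. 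Once these are in place, the weak convergence, asymptotic orthogonality, energy inequality and residual vanishing together certify that the $J_0$-data form a profile decomposition of $(u_n)$ in $\Wmp$ in the sense of \cite[Theorem~3.8]{Oku}.
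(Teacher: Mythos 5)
Your proposal is correct and follows essentially the same route as the paper: it likewise applies \cref{theorem;profile-decomp. in Wdmp} to $(u_n)$ viewed in $\Wdmp$, splits indices by the scale trichotomy, renormalizes the $J_0$ profiles to scale zero, uses the unlabeled lemma ($L^p$-boundedness plus a.e.\ convergence) to place the $J_0$ profiles in $\Wmp$ and upgrade to weak $\Wmp$ convergence, and obtains the inhomogeneous energy estimate and exactness condition by the translation-only convexity argument and \cite[Lemma~3.12]{Oku}, with the subcritical $L^q$-vanishing coming from $G$-complete continuity via \cref{theorem;weak G-comp conti}. No gaps to report.
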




\paragraph{Summary}
The above arguments are summarized as follows:
For a bounded sequence in $\Wdmp$ has three types of profiles:
\begin{itemize}
\setlength{\itemsep}{0mm}
\setlength{\parskip}{0mm}

\item ones which are translated by vectors $\dsl{y}{l}{n}\in\RN$,
\item ones which are concentrating at some points, 
\item ones which are anti-concentrating, i.e., vanishing locally. 
\end{itemize}
However, the above arguments indicate that 
if $(u_n)$ is bounded in $\leb{p}{\RN}$, then the anti-concentrating profiles do not exist because the $L^p$-norm of those profiles is increasing to infinity, contradicting the $L^p$-boundedness. 
Conversely, the concentrating profiles exist because the $L^p$-norm of those profiles is converging to zero.

\section[Decomposition of integral functionals]{Decomposition of integral functionals in critical cases}\label{BLcrit}
In this section, we always assume $1 <  p < N/m $ for $m,N \in \N$.

\medskip
As is intended in~\cite{Okumura2}, we shall investigate the results of decomposition of integral functionals of critical order (in other words, the iterated Brezis-Lieb lemma). 
The author pointed out in~\cite{Okumura2} that the results of decomposition of integral functionals will be obtained in Lebesgue or Sobolev spaces into which the dislocation Sobolev spaces considered are embedded $G$-completely continuously. Hence, along with \cref{lemma;G-complete continuity of Wdmp,lemma;G-complete continuity of Wdmp2}, we shall approach the results of decomposition of integral functionals in lower order homogeneous Sobolev spaces with critical exponents.
This result has been obtained by some precursors on profile decomposition, e.g.,~\cite{T-01,T-K}. 

\subsection{Brief Summary}
Firstly, we shall briefly review our results, choosing typical and simple examples in order to describe the essence of the results. 
Assume that $(u_n)$ is a bounded sequence in $\Wdmp$ and take  profile elements $(\bbl{w}{l}, \dsl{y}{l}{n}, \dsl{j}{l}{n} \Lambda)$ (on a subsequence still denoted by $n$) given by \cref{theorem;profile-decomp. in Wdmp}.
According to \cref{lemma;G-complete continuity of Wdmp,lemma;G-complete continuity of Wdmp2} and the exactness condition~\eqref{eq;Ishiwata condition Wdmp}, the residual term $r^L_n$ and its derivatives are vanishing (as $n\toinfty$ and then $L\to\Lambda$) in suitable $L^{\ppm}(\RN)$ or $\dot{W}^{k,p^*_{m-k}}(\RN)$. 

Typically, our main results read: 
\begin{alignat}{2}
&\lim_{n\to\infty}\int_{\RN} |u_n|^{\ppm} \,\dd x
=\sum_{l=0}^\Lambda \int_{\RN} |\bbl{w}{l}|^{\ppm} \,\dd x, & & \label{massdeco11}\\ 
&\lim_{n\toinfty}\int_{\RN}|\d^\alpha u_n|^{p^*_{m-|\alpha|}} \,\dd x
=\sum_{l=0}^\Lambda \int_{\RN} |\d^\alpha \bbl{w}{l}|^{p^*_{m-|\alpha|}} \,\dd x, 
&\quad  &|\alpha|<m, \label{massdeco12}\\ 
&\lim_{n\toinfty}\|u_n\|_{W^{k,p^*_{m-k}}(\RN)}^{p^*_{m-k}}
=\sum_{l=0}^\Lambda \|\bbl{w}{l}\|_{W^{k,p^*_{m-k}}(\RN)}^{p^*_{m-k}}, &\quad &0\le k<m. \label{massdeco13}
\end{alignat}

These formulas are shown as follows.  
As for Euclidean norms, one can show: 
\begin{alignat}{2}
&\l| \l|\sum_{l=1}^L s_l\r|^q-\sum_{l=1}^L|s_l|^q\r|
\le C_L \sum_{l\neq k}|s_l||s_k|^{q-1}, &\quad &s_l\in\R, \label{massdecoineq11}\\ 
&| |a+b|^q-|a|^q|\le \eps |a|^q+C_\eps|b|^q, &\quad &\eps>0, \ a,b\in\R. \label{massdecoineq12}
\end{alignat}
Combining the above, we see: 
\begin{align*}
&\l|\int_{\RN}|u_n|^{\ppm} \,\dd x 
-\sum_{l=0}^L\int_{\RN}|\bbl{w}{l}|^{\ppm} \,\dd x \r| \\
&=\l|\int_{\RN}|u_n|^{\ppm} \,\dd x 
-\sum_{l=0}^L\int_{\RN}|\bbl{w}{l}(\cdot-\dsl{y}{l}{n})|^{\ppm} \,\dd x \r| \\
&\le \eps\int_{\RN}\l|\sum_{l=0}^L\bbl{w}{l}(\cdot-\dsl{y}{l}{n})\r|^{\ppm} \,\dd x  
+C_\eps\int_{\RN}|r^L_n|^{\ppm} \,\dd x 
\\
&\qquad 
+C_L\sum_{l\neq k}\int_{\RN}|\bbl{w}{l}(\cdot-\dsl{y}{l}{n})||\bbl{w}{k}(\cdot-\dsl{y}{k}{n})|^{\ppm-1} \,\dd x .
\end{align*}

Since $(u_n)$ and $(r^L_n)$ are bounded in $\Wdmp$, 
$\int_{\RN}|\sum_{l=0}^L\bbl{w}{l}(\cdot-\dsl{y}{l}{n})|^{\ppm} \,\dd x $ is bounded. 
By the mutual orthogonality condition, $\int_{\RN}|\bbl{w}{l}(\cdot-\dsl{y}{l}{n})||\bbl{w}{k}(\cdot-\dsl{y}{k}{n})|^{\ppm-1} \,\dd x $ is converging to zero as $n\toinfty$. 
Also, the residual term $\int_{\RN}|r^L_n|^{\ppm} \,\dd x $ is vanishing as $n\toinfty$ and then $L\to\Lambda$. 
Therefore, passing to the limits as $n\toinfty$, $L\to\Lambda$ and then $\eps\to 0$, one can conclude~\eqref{massdeco11}.
Similarly, one can show~\eqref{massdeco12} and~\eqref{massdeco13}.

In the above observation, the inequality~\eqref{massdecoineq11} is employed together with the mutual orthogonality condition, yielding the degeneracy of cross terms. The inequality~\eqref{massdecoineq12} is used with $b=r^L_n$ which is vanishing in the suitable space. 
Hence, one can obtain other variants of the above decompositions of integral functionals, by considering the Lebesgue or Sobolev spaces where $\Wdmp$ is embedded $G[\RN,\Z;\ppm]$-completely continuously. 
Along the above strategy, in what follows, we shall investigate more general results.

\subsection{Main theorems}
Now we shall discuss the general results of decomposition of integral functionals of $(\intrn F(\d^\alpha u_n) \,\dd x )$ with continuous functions $F$ of critical growth, based on the profile decomposition in \emph{homogeneous} Sobolev spaces.  
We consider two types of continuous functions here: 
locally Lipschitz continuous functions with homogeneity of critical order; smooth functions with asymptotic growth of critical power near zero and infinity.
We refer the reader to~\cite[Section~5.2.]{T-K} as a reference of conditions of continuous functions.

\smallskip
In what follows, ${\rm Lip}_{\rm loc}(\R)$  denotes the set of locally Lipschitz continuous functions on $\R$. 
Let $\alpha\in(\NN)^N$ be a multi-index such that $|\alpha|<m$. 
Firstly, consider a function $F_\alpha\in {\rm Lip}_{\rm loc}(\R)$ such that 
\begin{equation}\label{growth cond Wdmp 1}
F_\alpha(2^{jN/p^*_{m-|\alpha|}}s)=2^{jN}F_\alpha(s), \quad s\in\R, \ j\in\Z.
\end{equation}
From this condition immediately follows 
\begin{equation*}
|F(s)|\le C |s|^{p^*_{m-|\alpha|}}, \quad s\in \R,
\end{equation*}
for some constant $C>0$. 
For those functions $F$, 
an invariance property follows from the change of  variables:  
\begin{equation*}
\int_{\RN} F \qty( \d^\alpha \qty[2^{jN/\ppm} u(2^j x) ]) \,\dd x 
=
\int_{\RN} F( \d^\alpha u(x) ) \,\dd x 
\end{equation*}
for all $u\in L^{\ppm}(\RN)$ and $j\in\Z$.

Now one form of the results of decomposition of integral functionals in the critical case reads:

\begin{theorem}[Decomposition of integral functionals~1]\label{theorem;B-L Wdmp 1}
Let $(u_n)$ be a bounded sequence in $\Wdmp$.  
Assume that on a subsequence, still denoted by $n$, $(u_n)$ has a profile decomposition with profile elements  
$(\bbl{w}{l}, \dsl{y}{l}{n}, \dsl{j}{l}{n}, \Lambda) \in  \Wdmp\times   \RN\times\Z\times (\N\cup\{0,\infty\})$  
$( l \in \NN^{<\Lambda +1}$, $n \in\Z_{\ge l})$  
as in \cref{theorem;profile-decomp. in Wdmp}.
Let $\alpha\in(\NN)^N$ be a multi-index such that $|\alpha|<m$, and 
let $F_\alpha\in {\rm Lip}_{\rm loc}(\R)$ satisfy~\eqref{growth cond Wdmp 1}. 
Then the following holds true: 
\begin{equation}\notag 
\lim_{n\toinfty}\int_{\RN}F_\alpha(\d^\alpha u_n(x)) \,\dd x 
=
\sum_{l=0}^\Lambda \int_{\RN} F_\alpha(\d^\alpha \bbl{w}{l}(x)) \,\dd x .
\end{equation}
\end{theorem}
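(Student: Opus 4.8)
The plan is to mimic the argument sketched in the Brief Summary, now applied to the functional $u \mapsto \int_{\RN} F_\alpha(\d^\alpha u)\dx$ rather than to the pure power $\int |u|^{\ppm}$. The key structural facts I would use are: (i) the growth bound $|F_\alpha(s)| \le C|s|^{p^*_{m-|\alpha|}}$ following from the homogeneity~\eqref{growth cond Wdmp 1}; (ii) the residual-vanishing statement~\eqref{residue vanish Wdmp2}, which gives $\d^\alpha r^L_{N(n)} \to 0$ in $L^{p^*_{m-|\alpha|}}(\RN)$ as $n\toinfty$ and then $L\to\Lambda$ (here using that $|\alpha|<m$ so $p^*_{m-|\alpha|}<\ppm$ and \cref{lemma;G-complete continuity of Wdmp2} applies); and (iii) the mutual orthogonality condition~\eqref{eq;500506}, which degenerates the cross terms coming from distinct profiles. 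The invariance $\int_{\RN} F_\alpha(\d^\alpha(\dsl{g}{l}{N(n)}\bbl{w}{l}))\dx = \int_{\RN} F_\alpha(\d^\alpha \bbl{w}{l})\dx$ noted just before the statement lets me replace each dislocated profile contribution by the undislocated one.

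First I would fix $L \in \NN^{<\Lambda+1}$ and write $\d^\alpha u_{N(n)} = \sum_{l=0}^L \d^\alpha(\dsl{g}{l}{N(n)}\bbl{w}{l}) + \d^\alpha r^L_{N(n)}$, recalling from the proof of \cref{theorem;profile-decomp. in Wdmp} that $\d^\alpha(\dsl{g}{l}{N(n)}\bbl{w}{l}) = g[\dsl{y}{l}{N(n)},\dsl{j}{l}{N(n)};p](\d^\alpha\bbl{w}{l})$, i.e.\ the derivative of a dislocated profile is itself a dislocation (in the exponent $p$, reflecting that differentiation lowers the critical index). I would then estimate
\begin{align*}
&\l| \int_{\RN} F_\alpha(\d^\alpha u_{N(n)})\dx - \sum_{l=0}^L \int_{\RN} F_\alpha(\d^\alpha\bbl{w}{l})\dx \r| \\
&\le \l| \int_{\RN} F_\alpha\l(\textstyle\sum_{l=0}^L \d^\alpha(\dsl{g}{l}{N(n)}\bbl{w}{l}) + \d^\alpha r^L_{N(n)}\r)\dx - \int_{\RN} F_\alpha\l(\textstyle\sum_{l=0}^L \d^\alpha(\dsl{g}{l}{N(n)}\bbl{w}{l})\r)\dx \r| \\
&\quad + \l| \int_{\RN} F_\alpha\l(\textstyle\sum_{l=0}^L \d^\alpha(\dsl{g}{l}{N(n)}\bbl{w}{l})\r)\dx - \sum_{l=0}^L \int_{\RN} F_\alpha(\d^\alpha\bbl{w}{l})\dx \r|,
\end{align*}
and control the two pieces separately. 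For the second piece I would use the homogeneity invariance to peel off each dislocation and then argue that the functional of a finite sum of asymptotically orthogonal dislocated profiles splits into the sum of the individual functionals, the cross contributions vanishing as $n\toinfty$ by~\eqref{eq;500506}; this is exactly the pattern realized by the elementary inequality~\eqref{massdecoineq11} in the Brief Summary, now carried out for $F_\alpha$ via its Lipschitz growth and the power bound. For the first piece I would invoke~\eqref{massdecoineq12}-type splitting with $b$ the residual contribution, using the $L^{p^*_{m-|\alpha|}}$-boundedness of $(\d^\alpha u_{N(n)})$ and of the profile sum together with the vanishing~\eqref{residue vanish Wdmp2} of $\d^\alpha r^L_{N(n)}$.

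Passing to the limits in the correct order — first $n\toinfty$, then $L\to\Lambda$, and finally the $\eps\to0$ in the Young-type inequality~\eqref{massdecoineq12} — yields $\lim_{n\toinfty}\int_{\RN} F_\alpha(\d^\alpha u_{N(n)})\dx = \sum_{l=0}^\Lambda \int_{\RN} F_\alpha(\d^\alpha\bbl{w}{l})\dx$, which is the claim. I expect the main obstacle to be making the cross-term degeneracy rigorous for the nonlinearity $F_\alpha$: whereas for a pure power one has the explicit inequality~\eqref{massdecoineq11}, here I must produce the analogous multilinear bound $|F_\alpha(\sum_l s_l) - \sum_l F_\alpha(s_l)| \le C_L \sum_{l\neq k} |s_l||s_k|^{p^*_{m-|\alpha|}-1}$ from only the local Lipschitz property plus the homogeneity-driven growth, and then verify that each resulting cross integral $\int_{\RN} |\d^\alpha(\dsl{g}{l}{N(n)}\bbl{w}{l})|\,|\d^\alpha(\dsl{g}{k}{N(n)}\bbl{w}{k})|^{p^*_{m-|\alpha|}-1}\dx$ tends to zero, exactly as in~\eqref{202105020110}, using that the (essential) supports of distinct dislocated profiles separate under~\eqref{eq;500506}. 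A minor technical point, handled by the approximation device already used in \cref{lemma;calc in proof of claim E}, is to reduce to smooth integrands when justifying the limit interchanges.
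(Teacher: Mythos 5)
Your plan is essentially the paper's own proof: the same splitting $u_{N(n)}=S^L_{N(n)}+r^L_{N(n)}$, the same two elementary inequalities (the multilinear cross-term bound you flag as the main obstacle is exactly the paper's \cref{lemma;10180009}, proved by using the homogeneity~\eqref{growth cond Wdmp 1} to rescale into a fixed annulus where the local Lipschitz constant applies, plus the Young-type bound of \cref{lemma;10180030}), degeneracy of cross terms via~\eqref{eq;500506}, vanishing of the residual via~\eqref{residue vanish Wdmp}--\eqref{residue vanish Wdmp2}, the limit order $n\toinfty$, $L\to\Lambda$, $\eps\to0$, and finally the homogeneity invariance to remove the dislocations (the paper writes this out for $|\alpha|=0$ and notes the general case is identical). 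One small correction: for $|\alpha|<m$ the derivative of a dislocated profile is $g[\dsl{y}{l}{N(n)},\dsl{j}{l}{N(n)};p^*_{m-|\alpha|}](\d^\alpha\bbl{w}{l})$, not $g[\cdot,\cdot;p]$ (the exponent $p=p^*_0$ arises only when $|\alpha|=m$), and it is precisely this exponent that matches the homogeneity~\eqref{growth cond Wdmp 1} and yields the invariance of the integral.
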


We secondly consider a continuous function $F_\alpha$ as follows. 
Let $f_\alpha\in C(\R)$ satisfy  
\begin{equation}\label{10240010}
    |f_\alpha(s)| \le C|s|^{(p^*_{m-|\alpha|})-1}, \quad s\in \R,
\end{equation}
for some $C>0$.
Let $F_\alpha\in C^{1}(\R)$ be the primitive function of $f_\alpha$ given by   
$F_\alpha(s)=\int_0^s f_\alpha(t) \, \dd t, \s s\in\R$. 
From~\eqref{10240010}, it follows that 
\begin{equation*}
|F_\alpha(s)| \le C |s|^{p^*_{m-|\alpha|}}, \quad s \in \R.
\end{equation*}
Now assume that the following limits exist: 
\begin{equation}\label{limiting state modulus B-L 4}
\begin{alignedat}{2}
a^{+} &\coloneqq \lim_{s\to 0^+} |s|^{-p^*_{m-|\alpha|}} F_\alpha(s),  &\qq
A^{+} &\coloneqq \lim_{s\toinfty} |s|^{-p^*_{m-|\alpha|}} F_\alpha(s), \\
a^{-} &\coloneqq \lim_{s\to 0^-} |s|^{-p^*_{m-|\alpha|}} F_\alpha(s), &\qquad 
A^{-} &\coloneqq \lim_{s\to -\infty} |s|^{-p^*_{m-|\alpha|}} F_\alpha(s).
\end{alignedat}
\end{equation}
Set limit functions of $F$ by 
\begin{align}
\label{limiting state B-L 41}
F_{\alpha,0}(s) &\coloneqq 
\begin{cases}
    a^{+}|s|^{p^*_{m-|\alpha|}}  &\mbox{if\ } s\ge 0, \\
    a^{-}|s|^{p^*_{m-|\alpha|}} &\mbox{if \ } s<0,
\end{cases} \\
\label{limiting state B-L 42}
F_{\alpha,\infty} (s) &\coloneqq 
\begin{cases}
    A^{+}|s|^{p^*_{m-|\alpha|}}  &\mbox{if\ } s\ge 0, \\
    A^{-}|s|^{p^*_{m-|\alpha|}} &\mbox{if \ } s<0.
\end{cases}
\end{align}

With these functions, a decomposition of integral functionals also holds true, that is,

\begin{theorem}[Decomposition of integral functionals~2] \label{theorem;B-L 4}
Let $(u_n)$ be a bounded sequence in $\Wdmp$. 
Assume that on a subsequence, still denoted by $n$, $(u_n)$ has a  profile decomposition with profile elements  $(\bbl{w}{l}, \dsl{y}{l}{n}, \dsl{j}{l}{n},\Lambda) \in \Wdmp \times \RN \times \Z\times (\N\cup\{0,\infty\})$  $( l \in \NN^{<\Lambda +1}$, $n \in\Z_{\ge l})$ as in \cref{theorem;profile-decomp. in Wdmp}.
Let $\alpha\in(\NN)^N$ be a multi-index such that $|\alpha|<m$, and let $f_\alpha \in C(\R)$ satisfy~\eqref{10240010} and let 
$F_\alpha(s)\coloneqq \int_0^s f_\alpha(t) \, \dd t$, $s \in \R$,  
admit the limits~\eqref{limiting state modulus B-L 4}. 
Define $F_{\alpha,0}$ and $F_{\alpha,\infty}$ by~\eqref{limiting state B-L 41} and~\eqref{limiting state B-L 42}.
Then, on a subsequence again if necessary, still denoted by $n$,  
the following holds true: 
\begin{align}
&\label{eq;B-L 4} 
\lim_{n\toinfty} \int_{\RN} F_\alpha(\d^\alpha u_n(x)) \,\dd x \\
&\notag 
\quad = \sum_{l \in \N_1} \int_{\RN} F_\alpha \qty( \d^\alpha \qty[2^{ \bbl{j}{l}N/\ppm } \bbl{w}{l}(2^{\bbl{j}{l}} x)]  )  \,\dd x \\
&\notag 
\qquad +
\sum_{l \in \N_2} \int_{\RN} F_{\alpha,\infty}(\d^\alpha \bbl{w}{l}(x)) \,\dd x
+
\sum_{l \in \N_3} \int_{\RN} F_{\alpha,0}(\d^\alpha \bbl{w}{l}(x)) \,\dd x,
\end{align}
where $\N_1 \cup \N_2 \cup \N_3 = \NN^{<\Lambda +1}$,
$\dsl{j}{l}{n} \to \bbl{j}{l}$ if $l \in \N_1$,  
$\dsl{j}{l}{n} \toinfty$ if $l \in \N_2$, 
$\dsl{j}{l}{n} \to -\infty$ if $l\in \N_3$. 
\end{theorem}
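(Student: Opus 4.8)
The plan is to peel off the residual, collapse the sum of profiles onto its diagonal, and then evaluate each single-profile contribution by a scaling change of variables; the three families of terms in~\eqref{eq;B-L 4} will emerge from the behaviour of $F_\alpha$ near $0$ and near $\infty$. Write $q\coloneqq p^*_{m-|\alpha|}$ and $S^L_n\coloneqq\sum_{l=0}^L\dsl{g}{l}{n}\bbl{w}{l}$, so that $u_n=S^L_n+r^L_n$. First I would pass to a further subsequence so that, for each $l$, exactly one of $\dsl{j}{l}{n}\to\bbl{j}{l}\in\Z$, $\dsl{j}{l}{n}\toinfty$, $\dsl{j}{l}{n}\to-\infty$ holds, which defines the partition $\NN^{<\Lambda+1}=\N_1\sqcup\N_2\sqcup\N_3$. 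By \cref{lemma;G-complete continuity of Wdmp2} the embedding $\Wdmp\hookrightarrow\dot{W}^{|\alpha|,q}(\RN)$ gives $\d^\alpha\bbl{w}{l}\in L^q(\RN)$ with $\|\d^\alpha\bbl{w}{l}\|_{L^q(\RN)}\le C\|\bbl{w}{l}\|_{\Wdmp}$, and $\d^\alpha r^L_n\to0$ in $L^q(\RN)$ as $n\toinfty$ and then $L\to\Lambda$ by~\eqref{residue vanish Wdmp2}. Two pointwise inequalities for $F_\alpha$, both consequences of the growth bound~\eqref{10240010} via the mean value theorem and Young's inequality, will be used repeatedly: $|F_\alpha(a+b)-F_\alpha(a)|\le\eps|a|^q+C_\eps|b|^q$ and, paralleling~\eqref{massdecoineq11}, $|F_\alpha(\sum_l s_l)-\sum_l F_\alpha(s_l)|\le C_L\sum_{l\ne k}|s_l||s_k|^{q-1}$.

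With the first inequality applied to $a=\d^\alpha S^L_n$ and $b=\d^\alpha r^L_n$, and using that $(S^L_n)$ is bounded in $\dot{W}^{|\alpha|,q}(\RN)$ (by \cref{lemma;residue bounded Wdmp} and the embedding), I would show that the functionals of $u_n$ and of $S^L_n$ share the same iterated limit, after letting $n\toinfty$, then $L\to\Lambda$, then $\eps\to0$. With the second inequality applied to $s_l=\d^\alpha\dsl{g}{l}{n}\bbl{w}{l}$, the discrepancy between $\int_{\RN} F_\alpha(\d^\alpha S^L_n)\dx$ and $\sum_{l=0}^L\int_{\RN} F_\alpha(\d^\alpha\dsl{g}{l}{n}\bbl{w}{l})\dx$ is controlled by the cross integrals $\int_{\RN}|\d^\alpha\dsl{g}{l}{n}\bbl{w}{l}|\,|\d^\alpha\dsl{g}{k}{n}\bbl{w}{k}|^{q-1}\dx$ with $l\ne k$, which vanish as $n\toinfty$ by the same mechanism as in~\eqref{202105020110}: approximating $\d^\alpha\bbl{w}{l}$ and $\d^\alpha\bbl{w}{k}$ by compactly supported functions, the mutual orthogonality~\eqref{202106240010} forces their rescaled supports to be disjoint for large $n$.

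The heart of the argument is the single-profile limit. Since $N/\ppm+|\alpha|=N/q$, differentiating $\dsl{g}{l}{n}\bbl{w}{l}$ turns the dislocation into the one of exponent $q$, and the change of variables $z=2^{\dsl{j}{l}{n}}(x-\dsl{y}{l}{n})$ gives, independently of $\dsl{y}{l}{n}$,
\[
\int_{\RN}F_\alpha(\d^\alpha\dsl{g}{l}{n}\bbl{w}{l})\dx
=\int_{\RN}2^{-\dsl{j}{l}{n}N}F_\alpha\big(2^{\dsl{j}{l}{n}N/q}\,\d^\alpha\bbl{w}{l}(z)\big)\,dz.
\]
The integrand is dominated by $C|\d^\alpha\bbl{w}{l}|^q\in L^1(\RN)$ uniformly in $\dsl{j}{l}{n}$ (again by~\eqref{10240010}), while by the limits~\eqref{limiting state modulus B-L 4} defining~\eqref{limiting state B-L 4} it converges pointwise to $F_\alpha(\d^\alpha[2^{\bbl{j}{l}N/\ppm}\bbl{w}{l}(2^{\bbl{j}{l}}\cdot)])$ when $l\in\N_1$, to $F_{\alpha,\infty}(\d^\alpha\bbl{w}{l})$ when $l\in\N_2$, and to $F_{\alpha,0}(\d^\alpha\bbl{w}{l})$ when $l\in\N_3$. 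Dominated convergence then produces precisely the three families of terms on the right of~\eqref{eq;B-L 4}.

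Assembling the three steps at fixed $L$ and then letting $L\to\Lambda$ yields~\eqref{eq;B-L 4}; the series on the right converges absolutely because each term is bounded by $C\|\d^\alpha\bbl{w}{l}\|_{L^q(\RN)}^q\le C\|\bbl{w}{l}\|_{\Wdmp}^q$, and $\sum_l\|\bbl{w}{l}\|_{\Wdmp}^p<\infty$ from~\eqref{eq;energy estim Wdmp} together with $q>p$ forces summability. I expect the main obstacle to be the justification of the double limit $n\toinfty$ versus $L\to\Lambda$ when $\Lambda=\infty$: the reduction to single profiles is exact only for fixed finite $L$, so the tail of the profile sum must be shown negligible uniformly in $n$, which is exactly what the residual estimate and the absolute convergence of the profile series supply. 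The secondary technical point is the truncation argument underlying the vanishing of the cross terms, since the profiles $\bbl{w}{l}$ are not compactly supported.
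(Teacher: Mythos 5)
Your proposal is correct and follows essentially the same route as the paper: the combined inequality $|F_\alpha(\sum_l s_l+r)-\sum_l F_\alpha(s_l)|\le\eps|\sum_l s_l|^{q}+C_\eps|r|^{q}+C_L\sum_{l\neq k}|s_l||s_k|^{q-1}$ (the paper's \cref{lemma;10240020}) to peel off the residual and kill the cross terms via mutual orthogonality, followed by the change of variables $z=2^{\dsl{j}{l}{n}}(x-\dsl{y}{l}{n})$ and dominated convergence to evaluate each single-profile integral according to the trichotomy $\N_1,\N_2,\N_3$, exactly as in the paper's~\eqref{10240120}--\eqref{10240150}. The only cosmetic differences are that you treat general $\alpha$ directly where the paper reduces to $|\alpha|=0$, and you derive the basic inequalities from the growth bound on $f_\alpha$ rather than citing the earlier lemmas.
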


One can prove other variants of the above results in the same way as follows:

\begin{proposition}\label{prop;massdecomp11}
Along the profile decomposition in $\Wdmp$ $(1<m<N/p)$, there holds 
\begin{gather*}
\lim_{n\toinfty}\|u_n\|_{\dot{W}^{k,q}(\RN)}^q=\sum_{l=0}^\Lambda \|\bbl{w}{l}\|_{\dot{W}^{k,q}(\RN)}^q, \quad q=p^*_{m-k}, \  k\in\NN, \ k<m.
\end{gather*}
\end{proposition}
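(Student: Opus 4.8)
The plan is to obtain the proposition as a direct consequence of \cref{theorem;B-L Wdmp 1}, applied to the model integrands $F_\alpha(s)=|s|^{q}$, followed by a summation over the multi-indices of order $k$.

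First I would fix $k\in\NN$ with $k<m$ and set $q\coloneqq p^*_{m-k}$, observing that $q=p^*_{m-|\alpha|}$ for every multi-index $\alpha$ with $|\alpha|=k$, and that $q=p^*_{m-k}>p>1$ since $1<p<N/m$ forces $(m-k)p<N$. For each such $\alpha$ I would take $F_\alpha(s)\coloneqq|s|^{q}$ and check the two hypotheses of \cref{theorem;B-L Wdmp 1}: because $q>1$, the map $s\mapsto|s|^{q}$ is of class $C^1$, hence $F_\alpha\in{\rm Lip}_{\rm loc}(\R)$; and the homogeneity condition~\eqref{growth cond Wdmp 1} holds verbatim, since $F_\alpha(2^{jN/q}s)=|2^{jN/q}s|^{q}=2^{jN}|s|^{q}=2^{jN}F_\alpha(s)$ for all $s\in\R$ and $j\in\Z$.

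\cref{theorem;B-L Wdmp 1} would then yield, for each fixed $\alpha$ with $|\alpha|=k$,
\begin{equation*}
\lim_{n\toinfty}\int_{\RN}|\d^\alpha u_n|^{q}\dx=\sum_{l=0}^\Lambda\int_{\RN}|\d^\alpha \bbl{w}{l}|^{q}\dx,
\end{equation*}
the series on the right being convergent as part of that conclusion. It then remains only to sum this identity over all $\alpha$ with $|\alpha|=k$. Since there are finitely many such multi-indices in $N$ variables, the limit $n\toinfty$ commutes with the $\alpha$-summation on the left, while on the right the finite $\alpha$-summation commutes with the $l$-series (legitimately, by nonnegativity of the integrands, or because a finite sum of convergent series converges to the sum of the limits). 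Recognizing that $\sum_{|\alpha|=k}\int_{\RN}|\d^\alpha v|^{q}\dx=\|v\|_{\dot{W}^{k,q}(\RN)}^{q}$ for $v=u_n$ and for $v=\bbl{w}{l}$ gives exactly the asserted identity.

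I do not expect a genuine analytic obstacle, as the statement is a special case of \cref{theorem;B-L Wdmp 1}; the only points demanding attention are the verification that $|s|^{q}$ is locally Lipschitz and satisfies the scaling~\eqref{growth cond Wdmp 1} (both immediate once one notes $q>1$ and $q=p^*_{m-k}$) and the bookkeeping of interchanging the finite $\alpha$-sum with the limit and with the profile series. If a self-contained proof were preferred to invoking \cref{theorem;B-L Wdmp 1}, the same conclusion would follow by the scheme outlined for~\eqref{massdeco11}: apply the elementary inequalities~\eqref{massdecoineq11} and~\eqref{massdecoineq12} (the latter with $b=\d^\alpha r^L_{N(n)}$), use the mutual orthogonality condition~\eqref{eq;500506} to annihilate the cross terms, invoke the vanishing~\eqref{residue vanish Wdmp2} of the residual in $\dot{W}^{k,p^*_{m-k}}(\RN)$ to discard the residual contribution, and pass to the limits in $n$, then $L\to\Lambda$, then $\eps\to0$.
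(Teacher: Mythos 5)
Your proposal is correct and matches the paper's intent: the paper states this proposition as a variant provable ``in the same way'' as \cref{theorem;B-L Wdmp 1} and the scheme sketched for~\eqref{massdeco11}--\eqref{massdeco13}, which is precisely what you do by applying \cref{theorem;B-L Wdmp 1} with $F_\alpha(s)=|s|^{q}$, $q=p^*_{m-k}=p^*_{m-|\alpha|}$, and summing over the finitely many $\alpha$ with $|\alpha|=k$. Your verifications (local Lipschitz continuity of $|s|^q$ since $q>1$, the scaling identity~\eqref{growth cond Wdmp 1}, and the harmless interchange of the finite $\alpha$-sum with the limit and the profile series) are exactly the bookkeeping the paper leaves implicit, and your fallback via~\eqref{massdecoineq11},~\eqref{massdecoineq12} and~\eqref{residue vanish Wdmp2} coincides with the paper's sketched route.
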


\subsection{Proof of \cref{theorem;B-L Wdmp 1}}
We shall prove the above results in the special case $|\alpha|=0$ and we abbreviate $f_\alpha$ to $f$ and so on; the other cases will be readily shown in the same way.

\paragraph{Basic lemmas}
We need important inequalities which play  crucial roles, corresponding to~\eqref{massdecoineq11} and~\eqref{massdecoineq12}.

\begin{lemma}\label{lemma;10180009}
Let $F\in {\rm Lip}_{\rm loc}(\R)$ be as in \cref{theorem;B-L Wdmp 1}.
Then for any $L\in\N$, there exists a constant $C_L >0$ such that for all $s_l \in \R, \ 1\le l \le L$,  
\begin{equation}\label{10180009}
\l| F \l( \sum_{l=1}^L s_l \r) -\sum_{l=1}^L F(s_l) \r|
\le C_L \sum_{1\le l\neq k \le L} |s_l||s_k|^{\ppm -1}.
\end{equation}
\end{lemma}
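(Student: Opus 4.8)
The plan is to reduce the estimate to the case $L=2$ and then induct on $L$, the whole argument resting on two consequences of the hypotheses on $F$. Writing $q \coloneqq \ppm > 1$, the first is the pointwise growth bound $|F(s)| \le C|s|^q$ already recorded after \eqref{growth cond Wdmp 1}. The second, which is the real engine, is a \emph{scaled} Lipschitz estimate: there is a constant $K>0$ such that
\eqn{
|F(a)-F(b)| \le K R^{q-1} |a-b| \qquad \text{whenever } |a|,|b| \le R.
}
To obtain this I would fix the Lipschitz constant $K_0$ of $F$ on $[-1,1]$ (available since $F\in{\rm Lip}_{\rm loc}(\R)$) and exploit the dyadic homogeneity $F(2^{jN/q}s)=2^{jN}F(s)$ from \eqref{growth cond Wdmp 1}: for $R=2^{jN/q}$ one rescales $a,b$ into $[-1,1]$, applies $K_0$ there, and unwinds the scaling, the factors $2^{jN}$ and $2^{-jN/q}$ combining into $R^{q-1}$. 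A general $R$ is then treated by rounding up to the next dyadic value, at the cost of a harmless constant.

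With these two ingredients the case $L=2$ is immediate. Assuming $|s_1|\le|s_2|$, I would take $R=2|s_2|$, which dominates both $|s_1+s_2|$ and $|s_2|$, and write
\eqn{
|F(s_1+s_2)-F(s_1)-F(s_2)| \le |F(s_1+s_2)-F(s_2)| + |F(s_1)| \le 2^{q-1}K|s_2|^{q-1}|s_1| + C|s_1|^{q}.
}
Bounding the last term by $|s_1|^{q}=|s_1|\,|s_1|^{q-1}\le |s_1|\,|s_2|^{q-1}$ shows that the right-hand side is controlled by the cross term $|s_1||s_2|^{q-1}+|s_2||s_1|^{q-1}$, which is exactly the $L=2$ instance of the claim.

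For the inductive step from $L-1$ to $L$ I would split off the last summand $s_L$, applying the $L=2$ bound to the pair $\sum_{l=1}^{L-1}s_l$ and $s_L$, and the induction hypothesis to $\sum_{l=1}^{L-1}s_l$. The only arithmetic point is to absorb the quantities $\bigl|\sum_{l<L}s_l\bigr|$ and $\bigl|\sum_{l<L}s_l\bigr|^{q-1}$ produced by the $L=2$ bound into genuine cross terms; this follows from $\bigl|\sum_{l<L}s_l\bigr|\le\sum_{l<L}|s_l|$ together with the elementary inequality $\bigl(\sum_{l<L}a_l\bigr)^{q-1}\le C_L\sum_{l<L}a_l^{q-1}$, valid for $a_l\ge0$ and $q-1>0$ (by convexity when $q\ge 2$ and by subadditivity of $t\mapsto t^{q-1}$ when $1<q<2$). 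Collecting terms yields the claimed constant $C_L$.

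I expect the scaled Lipschitz estimate to be the main obstacle, specifically pinning down that the local Lipschitz constant of $F$ on $[-R,R]$ grows precisely like $R^{q-1}$. This is exactly where the homogeneity \eqref{growth cond Wdmp 1}, rather than mere local Lipschitz continuity, is indispensable, and the restriction of the homogeneity to \emph{dyadic} dilations is what forces the small bookkeeping in the rounding of $R$; everything downstream is then routine.
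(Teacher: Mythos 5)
Your proof is correct, and it shares the paper's overall skeleton --- first establish the two-term estimate $|F(a+b)-F(a)-F(b)| \le C(|a|^{\ppm-1}|b|+|a||b|^{\ppm-1})$, then induct on $L$ --- but your route to the two-term estimate is genuinely different. The paper works with the Lipschitz constant of $F$ only on the annulus $M=[-d^2,-d^{-1}]\cup[d^{-1},d^2]$, $d=2^{N/\ppm}$, and splits into three cases according to the ratio $t=|b|/|a|$: for $t\le d^{-\alpha}$ it rescales $a$ dyadically into $[1,d]$ so that $a$ and $a+b$ both land in $M$ and applies the Lipschitz bound there; for $t\ge d^{\alpha}$ it argues symmetrically; and for $d^{-\alpha}\le t\le d^{\alpha}$ it uses only the growth bound together with $1+t^{\ppm}\le C(t+t^{\ppm-1})$ on that compact range of ratios. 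Your scaled Lipschitz estimate ${\rm Lip}(F;[-R,R])\le K R^{\ppm-1}$ --- obtained by pushing the Lipschitz constant of $F$ on $[-1,1]$ through the dyadic homogeneity~\eqref{growth cond Wdmp 1} (the factors $2^{jN}$ and $2^{-jN/\ppm}$ indeed combine to $(2^{jN/\ppm})^{\ppm-1}$) and rounding $R$ up to a dyadic value --- collapses all three cases into one line: with $R=2\max(|s_1|,|s_2|)$ both $s_1+s_2$ and $s_2$ lie in $[-R,R]$, and the cross terms appear immediately. What the paper's route buys is that it never invokes Lipschitz continuity of $F$ near the origin, only on an annulus, which would matter under the weaker hypothesis $F\in{\rm Lip}_{\rm loc}(\R\setminus\{0\})$; under the stated hypothesis $F\in{\rm Lip}_{\rm loc}(\R)$ your use of $[-1,1]$ is perfectly legitimate, and your argument is shorter. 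Finally, the absorption step you spell out in the induction, namely $\bigl(\sum_{l<L}|s_l|\bigr)^{\ppm-1}\le C_L\sum_{l<L}|s_l|^{\ppm-1}$ (convexity for $\ppm\ge 2$, subadditivity for $1<\ppm<2$), is exactly what the paper's inductive step uses implicitly in its final inequality, so making it explicit is a small gain in rigor rather than a deviation.
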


\begin{proof}
We argue by  induction on $L$.
For the sake of convenience, we set 
$d=2^{N/\ppm}>1$, 
$M\coloneqq [-d^2,-d^{-1}]\cup[d^{-1},d^2] \subset \R$ and  
$ L_M \coloneqq {\rm Lip}(M)$ which is the Lipschitz constant of $F$ on $M$. Fix $\alpha >0$ sufficiently large so that 
\begin{equation*}
1+d^{-\alpha} \le d, \quad 1-d^{-\alpha} \ge d^{-1}.
\end{equation*}
Note that this is equivalent to  
\begin{equation*}
\alpha > \max \l\{ 0, -\frac{\log (d-1)}{\log d}, -\frac{\log (1-1/d)}{\log d}  \r\}.
\end{equation*}

\paragraph{(I) Base step: $L=2$}
We shall show that there exists $C>0$ such that for any $a,b \in \R$,  
\begin{equation}\label{10180010}
|F(a+b)-F(a)-F(b)| \le C(|a|^{\ppm-1}|b|+|a||b|^{\ppm-1}).
\end{equation}
If $a=0$ or $b=0$, then~\eqref{10180010} is immediate, so we assume $a, b\neq 0$.
Firstly, we consider the case where the ratio $t= |b|/|a|$ is 
so small that $t\le d^{-\alpha}$.
Also we suppose $|a|\in [1,d]$. Then one has 
$a+b, a \in M$ since 
\begin{align*}
&|a+b| \le |a| + |b| \le d(1+d^{-\alpha}) \le d^2, \\
&|a+b| \ge |a| - |b| \ge 1(1-d^{-\alpha}) \ge d^{-1}.
\end{align*}
It follows that 
\begin{align}
\label{10180020}
&|F(a+b)-F(a)-F(b)|  \\
&\notag \le 
|F(a+b)-F(a)|+|F(b)|  
\le 
L_M|b|+C|b|^{\ppm} \\
&\notag \le 
L_M|a|^{\ppm-1}|b| +C |a|^{\ppm-1}|b| 
\le (L_M+C) |a|^{\ppm-1}|b|,
\end{align}
where we used the assumptions 
$|a|\in [1,d]$ and $|b|/|a| \le d^{-\alpha}$.

Then we remove the restriction $|a|\in [1,d]$,  and 
fix arbitrary $a\in\R$. Then there is a unique $j = j_a \in\Z$ such that 
$|a|\in [d^j,d^{j+1}]$, and so one sees 
$|d^{-j}a|\in[1,d]$.
Due to $t\le d^{-\alpha}$,  
it follows that $d^{-j}a+d^{-j}b, d^{-j}a \in M$
since 
\begin{align*}
&|d^{-j}a+d^{-j}b| \le d^{-j}|a|+d^{-j}|b| \le d^{-j}|a|(1+d^{-\alpha}) \le d (1+d^{-\alpha}) \le d^2, \\
&|d^{-j}a+d^{-j}b| \ge d^{-j}|a| - d^{-j}|b| \ge d^{-j}|a|(1-d^{-\alpha}) \ge 1(1-d^{-\alpha}) \le d^{-1}.
\end{align*}
Hence from~\eqref{growth cond Wdmp 1} and~\eqref{10180020} one sees that 
\begin{align*}
&|F(a+b)-F(a)-F(b)|  \\
&\le 
|F(d^{j}(d^{-j}a+d^{-j}b))-F(d^j(d^{-j}a))|+|F(d^j(d^{-j}b))|  \\
&=
2^{jN}|F(d^{-j}a+d^{-j}b)-F(d^{-j}a)|+ 2^{jN}|F(d^{-j}b)|  \\
&\le 
2^{jN} (L_M+C) |d^{-j}a|^{\ppm-1}|d^{-j}b| \\
&=
(L_M+C) |a|^{\ppm-1}|b|.
\end{align*}
Thus~\eqref{10180010} holds true if $a\in\R$ and $t\le d^{-\alpha}$.
In the same way, one can show that
\begin{equation*}
|F(a+b)-F(a)-F(b)|  
\le 
(L_M+C) |a||b|^{\ppm-1}
\end{equation*}
if $b\in\R$ and $t \ge d^{\alpha}$.

We finally have to verify~\eqref{10180010} in the case of 
$a,b \in \R $ with $ d^{-\alpha} \le t \le d^{\alpha}$.
It follows that 
\begin{align*}
|F(a+b)-F(a)-F(b)|
&\le 
|F(a+b)|+|F(a)|+|F(b)| \\
&\le 
C_p (|a|^{\ppm}+|b|^{\ppm}) \\
&=
C_p |a|^{\ppm} (1+t^{\ppm}).
\end{align*}
There exists a constant $C>0$ only depending on $d^{\alpha}$ such that 
\begin{equation*}
    1+t^{\ppm} \le C( t+t^{\ppm-1} ), \quad t\in [d^{-\alpha},d^{\alpha}],
\end{equation*}
and thus, we get  
\begin{align*}
    |F(a+b)-F(a)-F(b)|
    &\le 
    C |a|^{\ppm}( t+t^{\ppm-1} ) \\
    &=
    C(|a|^{\ppm-1}|b|+|a||b|^{\ppm-1})
\end{align*}
for all $a,b\in \R$ such that $t\in [d^{-\alpha},d^{\alpha}]$.
Eventually, \eqref{10180010} is verified for all $a,b \in \R$.

\paragraph{(II) Inductive step}
Assume that \eqref{10180009} holds true for some $L\in\N$. 
Set $a=\sum_{l=1}^L s_l$.
Then by the induction hypothesis and~\eqref{10180010}, 
one sees that 
\begin{align*}
    &\l| F \l( \sum_{l=1}^{L+1} s_l \r) -\sum_{l=1}^{L+1} F(s_l) \r| \\
    &\le 
    \l| F \l( a+s_{L+1} \r) -F(a) - F(s_{L+1}) \r| 
    + \l| F \l( \sum_{l=1}^L s_l \r) -\sum_{l=1}^L F(s_l) \r| \\
    &\le 
    C ( |a|^{\ppm-1}|s_{L+1}|+|a||s_{L+1}|^{\ppm-1}  )
    +C_L \sum_{1\le l\neq k \le L}| s_l| |s_k|^{\ppm-1} \\
    &\le 
    C_{L+1} \sum_{1\le l\neq k \le L+1} |s_l||s_k|^{\ppm -1}.
\end{align*}
Hence \eqref{10180009} with $L$ being replaced with $L+1$ is verified.
Hence~\eqref{10180009} is proved for the case $L\ge 2$. 
The case $L=1$ is obviously verified. Thus the proof is complete. 
\end{proof}

\begin{lemma}\label{lemma;10180030}
Let $F\in {\rm Lip}_{\rm loc}(\R)$ be as in \cref{theorem;B-L Wdmp 1}.
Then for any $\eps>0$, there exists a constant $C=C_\eps >0$ such that for all $a,b \in \R$, 
\begin{equation}\notag 
\l| F \l(a+b \r) -F(a) \r|
\le \eps |a|^{\ppm } +C|b|^{\ppm}.
\end{equation}
\end{lemma}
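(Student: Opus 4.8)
The plan is to deduce this estimate almost directly from the base‑case inequality~\eqref{10180010} that was already established in the proof of \cref{lemma;10180009}, combined with Young's inequality. First I would record the two ingredients I need. From the homogeneity~\eqref{growth cond Wdmp 1} one has the growth bound $|F(s)|\le C|s|^{\ppm}$ for all $s\in\R$; and from the $L=2$ step of \cref{lemma;10180009} one has, for all $a,b\in\R$,
\[
|F(a+b)-F(a)-F(b)|\le C\l(|a|^{\ppm-1}|b|+|a||b|^{\ppm-1}\r).
\]
Note also that $p>1$ with $N>mp$ forces $\ppm>1$, so $\ppm$ and its conjugate $\ppm/(\ppm-1)$ are both admissible exponents.

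Next I would combine these via the triangle inequality to isolate the quantity of interest:
\[
|F(a+b)-F(a)|\le |F(a+b)-F(a)-F(b)|+|F(b)|\le C\l(|a|^{\ppm-1}|b|+|a||b|^{\ppm-1}\r)+C|b|^{\ppm}.
\]
The remaining task is to absorb the two mixed homogeneous products into the target form. Here I would apply Young's inequality with the conjugate exponents $\ppm/(\ppm-1)$ and $\ppm$: for any $\delta>0$ there is $C_\delta>0$ with
\[
|a|^{\ppm-1}|b|\le \delta|a|^{\ppm}+C_\delta|b|^{\ppm},\qquad |a||b|^{\ppm-1}\le \delta|a|^{\ppm}+C_\delta|b|^{\ppm}.
\]

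Substituting these two bounds and collecting terms yields $|F(a+b)-F(a)|\le 2C\delta|a|^{\ppm}+(2CC_\delta+C)|b|^{\ppm}$. Choosing $\delta=\eps/(2C)$ makes the coefficient of $|a|^{\ppm}$ exactly $\eps$ and fixes the constant $C_\eps\coloneqq 2CC_\delta+C$, which is precisely the claimed inequality. I do not expect a genuine obstacle, since the scaling reduction (to $|a|\in[1,d]$) and the trichotomy on the ratio $t=|b|/|a|$ were the real work, and they were already carried out in \cref{lemma;10180009}; the only point needing care is that the coefficient of $|a|^{\ppm}$ must be an arbitrarily small $\eps$ \emph{uniformly} in $a,b$, and Young's inequality supplies exactly this by letting $\delta$ be proportional to $\eps$ at the price of enlarging the constant multiplying $|b|^{\ppm}$. (A self-contained proof bypassing~\eqref{10180010} is possible by repeating the homogeneity reduction and the trichotomy directly on $|F(a+b)-F(a)|$, but this duplicates the earlier argument and is unnecessary given the preceding lemma.)
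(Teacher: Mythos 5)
Your proposal is correct and follows essentially the same route as the paper: the paper's proof also writes $|F(a+b)-F(a)| \le |F(a+b)-F(a)-F(b)| + |F(b)|$, invokes the $L=2$ case of \cref{lemma;10180009} together with the growth bound $|F(b)|\le C|b|^{\ppm}$, and then absorbs the cross terms $|a|^{\ppm-1}|b|+|a||b|^{\ppm-1}$ via Young's inequality into $\eps|a|^{\ppm}+C_\eps|b|^{\ppm}$. Your write-up merely makes the choice of conjugate exponents and of $\delta$ explicit, which the paper leaves implicit.
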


\begin{proof}
Fix $\eps>0$ arbitrarily.
The preceding lemma and the Young inequality yield  
\begin{align*}
    \l| F \l(a+b \r) -F(a) \r|
    &\le 
    \l| F \l(a+b \r) -F(a)-F(b) \r| + |F(b)| \\
    &\le 
    C(|a|^{\ppm-1}|b|+|a||b|^{\ppm-1}) + C|b|^{\ppm} \\
    &\le 
    \eps |a|^{\ppm } +C_{\eps} |b|^{\ppm},
\end{align*}
hence the conclusion. 
\end{proof}

Combining the above two lemmas, one gets the following 

\begin{lemma}
Let $F \in {\rm Lip}_{\rm loc}(\R)$ be as in \cref{theorem;B-L Wdmp 1} and let $L \in  \N$.
Then for any $\eps >0$, there exists $C_\eps, C_L >0$ such that for all $s_l, r \in \R$ $(l=1,\ldots, L)$, 
\begin{align}
\label{202105080050}
&\l| F \l( \sum_{l=1}^L s_l +r  \r) - \sum_{l=1}^L F(s_l) \r|  
\\
&\notag 
\le 
\eps \l|\sum_{l=1}^L s_l \r|^{\ppm} +C_\eps |r|^{\ppm} 
+C_L \sum_{1 \le l \neq l' \le L} |s_l||s_{l'} |^{\ppm-1}. 
\end{align}
\end{lemma}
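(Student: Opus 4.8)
The plan is to reduce the statement to the two inequalities already established in \cref{lemma;10180009} and \cref{lemma;10180030} by a single application of the triangle inequality. Writing $a \coloneqq \sum_{l=1}^L s_l$, I would split
\[
\left| F\left( \sum_{l=1}^L s_l + r \right) - \sum_{l=1}^L F(s_l) \right|
\le \left| F(a+r) - F(a) \right| + \left| F(a) - \sum_{l=1}^L F(s_l) \right|,
\]
so that the argument $a+r$ of $F$ is organized around the main part $a$ and the remainder $r$.

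For the first summand I would invoke \cref{lemma;10180030} with this $a$ and with $b=r$, which, for the prescribed $\eps>0$, gives $|F(a+r)-F(a)| \le \eps |a|^{\ppm} + C_\eps |r|^{\ppm}$. Since $a=\sum_{l=1}^L s_l$, the leading term is exactly $\eps |\sum_{l=1}^L s_l|^{\ppm}$, which matches the form required in \eqref{202105080050}; this is the only place where the choice $a=\sum_{l=1}^L s_l$ (rather than grouping the remainder differently) matters.

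For the second summand I would apply \cref{lemma;10180009} directly to $F(\sum_{l=1}^L s_l)-\sum_{l=1}^L F(s_l)$, producing the cross-term bound $C_L \sum_{1\le l\neq k\le L} |s_l||s_k|^{\ppm-1}$. Adding the two estimates yields \eqref{202105080050} verbatim. There is essentially no obstacle here: all the analytic work---the homogeneity-driven two-term estimate and the $\eps$-Young splitting---is carried by the two preceding lemmas, and the present statement is purely their combination. The only point worth checking is that the constants $C_\eps$ and $C_L$ remain independent of one another, which is immediate since $C_L$ depends only on $L$ through \cref{lemma;10180009} while $C_\eps$ depends only on $\eps$ through \cref{lemma;10180030}.
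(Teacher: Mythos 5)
Your proof is correct and is exactly the argument the paper intends: the paper introduces this lemma with the phrase ``Combining the above two lemmas,'' leaving implicit precisely the triangle-inequality split around $a=\sum_{l=1}^L s_l$ that you carry out, with \cref{lemma;10180030} handling $|F(a+r)-F(a)|$ and \cref{lemma;10180009} handling $|F(a)-\sum_{l=1}^L F(s_l)|$. Nothing further is needed.
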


\paragraph{Main body}
Now we are ready to prove the first result of decomposition of integral functionals.

\begin{proof}[Proof of \cref{theorem;B-L Wdmp 1}]
Set 
$$
E \coloneqq \sup_{n \ge 0}\|u_n\|_{\Wdmp}, \quad 
S^L_n \coloneqq \sum_{l=0}^L 
\dsl{g}{l}{n} \bbl{w}{l}, 
\quad  L \in \NN^{<\Lambda+1}, \  n \in \Z_{\ge L}, 
$$
and one has 
$
u_n = S^L_n + r^L_n.
$
Fix $\eps>0$ arbitrarily.
From~\eqref{202105080050}, one sees that 
\begin{align}
&\label{202105080060}
\int_{\RN} \l|
F(u_n) - \sum_{l=0}^L F(\dsl{g}{l}{n} \bbl{w}{l} )
\r| \,\dd x 
\\
&\le 
\eps \| S^L_n \|^{\ppm}_{L^{\ppm}(\RN)}
+C_\eps \| r^L_n \|^{\ppm}_{L^{\ppm}(\RN)} \notag  
\\
&\notag \qquad 
+C_L\sum_{1 \le l \neq l' \le L} 
\int_{\RN}
|\dsl{g}{l}{n} \bbl{w}{l}  | 
|\dsl{g}{l'}{n} \bbl{w}{l'}|^{\ppm-1} \,\dd x.  \notag 
\end{align}
By~\eqref{eq;residue bounded Wdmp},~\eqref{202105120010} and the Sobolev inequality, we have 
\begin{equation}
\varlimsup_{L\to \Lambda} \varlimsup_{n\toinfty} 
\|S^L_n\|_{L^{\ppm}(\RN)} 
\le C
 \varlimsup_{L\to \Lambda} \varlimsup_{n\toinfty} \|S^L_n\|_{\Wdmp} 
\le C E. \label{202103170110}  
\end{equation}
The mutual orthogonality condition (the assertion~(ii) in \cref{theorem;profile-decomp. in Wdmp}) 
implies that for all $l \neq l' \in \NN^{<\Lambda+1}$,  
\begin{equation}\label{202105080070}
\int_{\RN}
|\dsl{g}{l}{n} \bbl{w}{l} | 
|\dsl{g}{l'}{n} \bbl{w}{l'} |^{\ppm-1} \,\dd x 
\to 0
\end{equation}
as $n\toinfty$.
We also have 
\begin{equation}\label{202105080080}
    \varlimsup_{L\to \Lambda} \varlimsup_{n\toinfty} 
    \| r^L_n \|_{L^{\ppm}(\RN)} =0.
\end{equation}

Combining~\eqref{202105080060}--\eqref{202105080080}, 
and passing to the limits as $n\toinfty$,  $L \to \Lambda$ and then $\eps \to 0$, 
one gets 
\begin{equation*}
\varlimsup_{L\to \Lambda} \varlimsup_{n\toinfty} 
\int_{\RN} \l|
F(u_n) - \sum_{l=0}^L F(\dsl{g}{l}{n} \bbl{w}{l} )
\r| \,\dd x 
=0.
\end{equation*}
By~\eqref{growth cond Wdmp 1} and the change of variables, we get 
\begin{align*}
&\varlimsup_{L\to \Lambda} \varlimsup_{n\toinfty} 
\l|  \int_{\RN} 
F(u_n)  \,\dd x 
-
\sum_{l=0}^L  \int_{\RN}   F(\bbl{w}{l} )   \,\dd x \r| \\ 
&\le 
\varlimsup_{L\to \Lambda} \varlimsup_{n\toinfty} 
\int_{\RN} \l|
F(u_n) - \sum_{l=0}^L F(\dsl{g}{l}{n} \bbl{w}{l}  )
\r| \,\dd x 
\\ 
&=0,
\end{align*}
which yields 
\begin{equation*}
\lim_{n \toinfty}
\int_{\RN} 
F(u_n) \,\dd x 
=
\sum_{l=0}^\Lambda  \int_{\RN}   F(\bbl{w}{l} ) \,\dd x.
\end{equation*}
This completes the proof. 
\end{proof}

\subsection{Proof of \cref{theorem;B-L 4}}
We now move on to the proof of \cref{theorem;B-L 4}.
In this case, too, we shall prove the result in the special case $|\alpha|=0$ and we abbreviate $f_\alpha$ to $f$ and so on.

\paragraph{Basic lemma}
Firstly, we provide important inequalities corresponding to \cref{lemma;10180009,lemma;10180030}. 

\begin{lemma}\label{lemma;10240020}
Let $f $ and $F$ be as in \cref{theorem;B-L 4}. 
Then for any $L \in \N$, 
there exists a constant $C=C_L>0$ such that  
for any $s_1,\ldots, s_L \in \R$,  
\begin{equation}\label{10240020}
    \l|F \l( \sum_{l=1}^L s_l \r) - \sum_{l=1}^L F(s_l) \r|
    \le 
    C \sum_{1\le l\neq k \le L} |s_l||s_k|^{\ppm-1}.
\end{equation}
Moreover, for any $\eps>0$,  there exists a constant $C=C_\eps>0$
such that for all $a,b \in \R$,  
\begin{equation}\label{10240030}
    |F(a+b) - F(a)  | \le \eps |a|^{\ppm} + C|b|^{\ppm}.
\end{equation}
Furthermore, for any $\eps>0$, there exists $C_\eps, C_L >0$ such that for all $s_l, r \in \R$ $(l=1,\ldots, L)$, 
\begin{align}
&\label{202105080100}
 \l| F \l( \sum_{l=1}^L s_l + r \r) -\sum_{l=1}^L F(s_l)  \r| 
\\
&\notag 
\le 
\eps \l| \sum_{l=1}^L s_l \r|^{\ppm} 
+ C_\eps |r|^{\ppm}
+ C_L \sum_{1\le l\neq l' \le L} |s_l||s_{l'}|^{\ppm-1}. 
\end{align}
\end{lemma}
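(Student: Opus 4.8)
The plan is to mirror the three-part structure of \cref{lemma;10180009,lemma;10180030} and their combination~\eqref{202105080050}, but to exploit the pointwise derivative bound~\eqref{10240010} in place of the scaling argument that was needed there. The key simplification is that here $F$ is $C^1$ with $F'=f$ and $|f(s)|\le C|s|^{\ppm-1}$, so the base case will follow directly from the fundamental theorem of calculus rather than from exact homogeneity as in~\eqref{growth cond Wdmp 1}. Note that the limit data $a^\pm,A^\pm$ from~\eqref{limiting state modulus B-L 4} are not used in this lemma; they enter only later, in the identification of $F_{\alpha,0}$ and $F_{\alpha,\infty}$.

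First I would establish the case $L=2$ of~\eqref{10240020}. Writing $F(a+b)-F(a)=\int_0^1 f(a+\theta b)\,b\,d\theta$ and inserting $|f(s)|\le C|s|^{\ppm-1}$ gives $|F(a+b)-F(a)|\le C|b|\,(|a|+|b|)^{\ppm-1}$; since $\ppm-1>0$ one has the elementary bound $(|a|+|b|)^{\ppm-1}\le C(|a|^{\ppm-1}+|b|^{\ppm-1})$ (with $C=\max\{2^{\ppm-2},1\}$, covering both $\ppm-1\ge 1$ and $0<\ppm-1<1$), whence $|F(a+b)-F(a)|\le C(|a|^{\ppm-1}|b|+|b|^{\ppm})$. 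Adding $|F(b)|\le C|b|^{\ppm}$, and arguing symmetrically, I obtain the two one-sided estimates $|F(a+b)-F(a)-F(b)|\le C(|a|^{\ppm-1}|b|+|b|^{\ppm})$ and $\le C(|b|^{\ppm-1}|a|+|a|^{\ppm})$. Splitting into the cases $|a|\le|b|$ and $|b|\le|a|$ and using $|a|^{\ppm}\le|a|\,|b|^{\ppm-1}$ in the former, $|b|^{\ppm}\le|b|\,|a|^{\ppm-1}$ in the latter, absorbs the leftover pure power into a cross term and yields~\eqref{10240020} for $L=2$.

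Next I would obtain general $L$ in~\eqref{10240020} by induction exactly as in the inductive step of \cref{lemma;10180009}: setting $a=\sum_{l=1}^L s_l$, applying the case $L=2$ to $a$ and $s_{L+1}$, and folding in the induction hypothesis, the cross terms recombine into $\sum_{1\le l\neq k\le L+1}|s_l||s_k|^{\ppm-1}$. The inequality~\eqref{10240030} then follows from the case $L=2$ of~\eqref{10240020} together with the Young inequality, splitting each cross term $|a|^{\ppm-1}|b|$ and $|a||b|^{\ppm-1}$ as $\eps|a|^{\ppm}+C_\eps|b|^{\ppm}$, precisely as in \cref{lemma;10180030}. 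Finally~\eqref{202105080100} is the combination of the previous two: with $a=\sum_{l=1}^L s_l$ and $b=r$, the triangle inequality gives $|F(a+r)-\sum_{l=1}^L F(s_l)|\le|F(a+r)-F(a)|+|F(a)-\sum_{l=1}^L F(s_l)|$, and I estimate the first summand by~\eqref{10240030} and the second by~\eqref{10240020}, reproducing the structure of~\eqref{202105080050}.

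The point worth flagging is that, unlike \cref{lemma;10180009}, neither exact homogeneity nor a reduction to a compact annulus is required, because~\eqref{10240010} supplies the increment estimate directly through the integral representation of $F$. Consequently the only mild technical care is the elementary inequality for $(|a|+|b|)^{\ppm-1}$ and the case split converting the residual pure powers $|a|^{\ppm},|b|^{\ppm}$ into admissible cross terms; I do not expect any genuine obstacle, and the remainder is the same bookkeeping induction already carried out for the homogeneous case.
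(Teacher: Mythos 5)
Your proposal is correct, and the derivations of~\eqref{10240030} and~\eqref{202105080100} coincide exactly with the paper's: \eqref{10240030} from the $L=2$ case of~\eqref{10240020} plus Young's inequality, and~\eqref{202105080100} from the first two via the triangle inequality with $a=\sum_{l=1}^L s_l$, $b=r$. Where you genuinely add something is the core inequality~\eqref{10240020}: the paper gives no in-text proof, deferring it to Lemmas~4.4--4.7 of the companion paper~\cite{Oku} (which is listed as in preparation), whereas you supply a self-contained argument. Your route --- the representation $F(a+b)-F(a)=\int_0^1 f(a+\theta b)\,b\,d\theta$, the growth bound~\eqref{10240010}, the elementary estimate $(|a|+|b|)^{\ppm-1}\le C(|a|^{\ppm-1}+|b|^{\ppm-1})$ valid in both regimes $\ppm-1\ge 1$ and $0<\ppm-1<1$, the two one-sided bounds combined through the case split $|a|\le|b|$ versus $|b|\le|a|$ to absorb the pure powers into cross terms, and then the same bookkeeping induction as in \cref{lemma;10180009} --- is sound; the only step left implicit is $(\sum_{l\le L}|s_l|)^{\ppm-1}\le C_L\sum_{l\le L}|s_l|^{\ppm-1}$ in the inductive step, which is the same elementary inequality again and is equally implicit in the paper's own induction. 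Your observation that the $C^1$ structure with $F'=f$ makes the compact-annulus/scaling reduction of \cref{lemma;10180009} unnecessary here is exactly right: that machinery was forced by mere local Lipschitz continuity plus homogeneity~\eqref{growth cond Wdmp 1}, while~\eqref{10240010} hands you the increment estimate directly, and the limits~\eqref{limiting state modulus B-L 4} indeed play no role in this lemma. The net effect is that your version makes the present paper self-contained at a point where it currently relies on an unavailable reference.
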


\begin{proof}
The inequality~\eqref{10240020} will be proved in the same way as the proofs of~\cite[Lemmas~4.4--4.7]{Okumura2}.
The inequality~\eqref{10240030} follows from the first inequality with $L=2$ and the Young inequality. 
The inequality~\eqref{202105080100} is proved by the above two. 
\end{proof}

\paragraph{Main body}
Now we are in a position to prove \cref{theorem;B-L 4}. 

\begin{proof}[Proof of \cref{theorem;B-L 4}]
From a similar argument to the preceding proof of \cref{theorem;B-L Wdmp 1} together with \cref{lemma;10240020}, one obtains the following: 
\begin{equation}\label{10240120}
    \lim_{L\to \Lambda}\varlimsup_{n\toinfty}
    \int_{\RN} \l| F(u_n) 
    -  \sum_{l=0}^L F \l( \dsl{g}{l}{n} \bbl{w}{l}   \r)  \r| \,\dd x =0.
\end{equation}
When $l \in  \N_1$,  there exists a limit $\bbl{j}{l} \in \Z$ 
such that $\dsl{j}{l}{n} \to \bbl{j}{l}$ as $n\toinfty$.
So if $l\in\N_1$, then  by the dominated convergence theorem, one sees that 
\begin{align}
\label{10240130}
    &\int_{\RN}
    F(\dsl{g}{l}{n} \bbl{w}{l}  ) \,\dd x  \\
&\notag =
\int_{\RN}
F( 2^{\dsl{j}{l}{n}N/\ppm} 
\bbl{w}{l} (2^{\dsl{j}{l}{n}} (x))  ) \,\dd x  \\
&\notag =
\int_{\RN}
F( 2^{\bbl{j}{l}N/\ppm} 
\bbl{w}{l} (2^{ \bbl{j}{l} } (x))  ) \,\dd x  
+o(1)
\end{align}
as $n\toinfty$.
Similarly, the dominated convergence theorem implies that 
\begin{alignat}{2}
\label{10240140}
&\int_{\RN}
 F( 2^{\dsl{j}{l}{n}N/\ppm} 
\bbl{w}{l} (2^{\dsl{j}{l}{n}} (x - \dsl{y}{l}{n}))  ) \,\dd x  
\\
&\notag \qquad 
=
\int_{\RN}
F_\infty ( \bbl{w}{l} ) \,\dd x 
+o(1) &\quad &(l \in \N_2), \\ 
\label{10240150}
&\int_{\RN}
 F( 2^{\dsl{j}{l}{n}N/\ppm} 
\bbl{w}{l} (2^{\dsl{j}{l}{n}} (x - \dsl{y}{l}{n}))  )  \,\dd x 
\\
&\notag \qquad 
=
\int_{\RN}
F_0 ( \bbl{w}{l} ) \,\dd x 
+o(1) &\quad &(l \in \N_3),
\end{alignat}
as $n\toinfty$. 

Combining~\eqref{10240130},~\eqref{10240140} and~\eqref{10240150}, 
one obtains
\begin{align*}
    &\l| \int_{\RN} F(u_n)  \,\dd x 
    - \sum_{l\in\N_1, l \le L} \int_{\RN}
    F( 2^{\bbl{j}{l} N/\ppm} 
    \bbl{w}{l} (2^{j_{l,0}} (x))  ) \,\dd x  \r. \\
    &\qquad \l. - \sum_{l\in\N_2, l \le L} \int_{\RN}
    F_\infty ( \bbl{w}{l} )\,\dd x 
    - \sum_{l\in\N_3, l \le L} \int_{\RN}
    F_0 ( \bbl{w}{l} ) \,\dd x  \r| \\
    &\le 
    \l| \int_{\RN} F(u_n)  \,\dd x 
    -\sum_{ l=0}^L \int_{\RN}
    F(\dsl{g}{l}{n} \bbl{w}{l}  )  \,\dd x \r| +o(1)
\end{align*}
as $n\toinfty$. 
From the above and~\eqref{10240120}, one concludes that
\begin{align*}
 &\lim_{L\to \Lambda} \varlimsup_{n\toinfty}
\l| \int_{\RN} F(u_n) \,\dd x
- \sum_{l\in\N_1, l \le L} \int_{\RN}
F( 2^{\bbl{j}{l} N/\ppm} 
\bbl{w}{l} (2^{\bbl{j}{l}} (x))  ) \,\dd x \r. \\
&\l. \qq\qquad - \sum_{l\in\N_2, l \le L} \int_{\RN}
F_\infty ( \bbl{w}{l} ) \,\dd x
- \sum_{l\in\N_3, l \le L} \int_{\RN}
F_0 ( \bbl{w}{l} ) \,\dd x \r| \\
&=0,    
\end{align*}
which leads us to~\eqref{eq;B-L 4}.
This completes the proof. 
\end{proof}

\section*{Acknowledgment}
The author would like to thank his supervisor Goro Akagi for his great support and advice during the preparation of the paper. 
The author also wishes to express his gratitude to Michinori Ishiwata and Norihisa Ikoma for a lot of valuable comments on proofs and the organization of the paper. 
The author also wishes to thank editors and reviewers who gave many valuable comments to improve this paper.

\appendix

\section{Abstract theory of profile decomposition}\label{section:appendixA}
We here recall two fundamental theorems of profile decomposition in general reflexive Banach spaces provided in the author's previous paper~\cite{Okumura2}. The following theorem is employed to prove \cref{theorem;profile-decomp. in Wdmp}. 

\begin{theorem}[\noindent{\cite[Theorem 2.1]{Okumura2}}]\label{theorem;ProDeco in X}
Let $(X,\|\cdot\|_X)$ be a reflexive Banach space, let 
$(X, G)$ be a dislocation space with a dislocation group $G$, and let $(u_n)$ be a bounded sequence in $X$. 
Then there exist $\Lambda \in \N\cup\{0,+\infty\}$, a subsequence $(N(n)) \preceq (n)$, 
$\bbl{w}{l} \in X \ (l \in \NN^{<\Lambda+1})$
and 
$\dsl{g}{l}{N(n)} \in G \ (l \in \NN^{<\Lambda+1}, \ n \in \Z_{\ge l})$ 
such that 
\begin{equation*}
u_{N(n)} = \sum_{l=0}^L \dsl{g}{l}{N(n)}\bbl{w}{l}+r^L_{N(n)}, \quad L \in \NN^{<\Lambda +1}, \ n \in \Z_{ \ge L}, 
\end{equation*}
and the following hold:
\begin{enumerate}
\setlength{\itemsep}{0mm}
\setlength{\parskip}{0mm}

\item $\dsl{g}{0}{N(n)}={\rm Id}_X \ (n \ge 0), \quad \bbl{w}{l}\neq 0 \ (1 \le l \in \NN^{<\Lambda+1})$.
\item $\dsl{g}{l}{N(n)}^{-1}\dsl{g}{k}{N(n)} \to 0$ operator-weakly 
as $n \toinfty$ whenever $l\neq k \in \NN^{<\Lambda +1}$.
\item $\dsl{g}{l}{N(n)}^{-1}u_{N(n)} \to \bbl{w}{l}$ weakly in $X$ 
$(n\toinfty, \ l\in \NN^{<\Lambda +1})$.
\item
For $k\in\NN^{<\Lambda+1}$, 
\begin{equation*}
\dsl{g}{k}{N(n)}^{-1} r^{L}_{N(n)} \to
\begin{cases}
0 & \mbox{if}\ k=0,\ldots,L, \\
\bbl{w}{k} & \mbox{if}\ k\ge L+1,
\end{cases}
\end{equation*}
weakly in $X$ as $n\toinfty$.
\end{enumerate} 
Furthermore, if either $\Lambda =\infty$ and $\|\bbl{w}{l}\|_X \to 0$ as $l\to \Lambda$, or else $\Lambda<\infty$, then the following exactness condition holds:  
\begin{equation}\label{eq;Ishiwata condition X}
\lim_{L\to \Lambda}\sup_{\phi\in U}\varlimsup_{n\toinfty}\sup_{g\in G}
\qty| \la  \phi, g^{-1}r^L_{N(n)} \ra|=0,
\end{equation}
where $U \coloneqq B_{X^*}(1)$.
\end{theorem}

As is remarked in~\cite{Okumura2}, we put the same remark here:
\begin{remark}
\begin{itemize}

\item 
The above theorem gives \emph{qualitative} assertions of profile decomposition, and \emph{quantitative} one~\eqref{eq;Ishiwata condition X} is obtained under a further assumption that either $\Lambda =\infty$ and $\|\bbl{w}{l}\|_X \to 0$ as $l\to \Lambda$, or else $\Lambda<\infty$. 
However, this further assumption will be ensured in theorems of profile decomposition below in the present paper and in~\cite{Okumura2}, by virtue of the direct calculations for the decompositions in energy like~\eqref{eq;energy estim Wdmp}, which shows that $\norm{\bbl{w}{l}}_X\to 0$ as $l\toinfty$ if $\Lambda=\infty$. 

\item 
As for the assumption ``either $\Lambda =\infty$ and $\|\bbl{w}{l}\|_X \to 0$ as $l\to \Lambda$, or else $\Lambda<\infty$'', 
Solimini-Tintarev~\cite{SoliminiTintarev} generally verified this further assumption, by reformulating the profile decomposition theory by means of the so-called ``$\Delta$-convergence''. 
They established a $\Delta$-convergence-version of the profile decomposition theory for uniformly convex and uniformly smooth Banach spaces, where the above further assumption always holds true, and they also showed that if the Banach space satisfies Opial's condition, the $\Delta$-limits coincide with the weak limits, so that the above further assumption are also true with respect to the weak-topological profile decomposition theory. 
It is noteworthy that Hilbert spaces, Besov spaces and Triebel-Lizorkin spaces (including Sobolev spaces) enjoys Opial's condition, so that our assumption is satisfied in those cases.

\end{itemize}
\end{remark}


The following theorem implies that the residual term satisfying the exactness condition~\eqref{eq;Ishiwata condition X} becomes arbitrarily small in a normed space $Y$, where the embedding $X\hookrightarrow Y$ is $G$-completely continuous. From the following theorem, one can prove~\eqref{residue vanish Wdmp} and~\eqref{residue vanish Wdmp2}. 

\begin{theorem}[\noindent{\cite[Theorem 2.5]{Okumura2}}]\label{theorem;weak G-comp conti}
Let $(X,G)$ be as in \cref{theorem;ProDeco in X} and let $(Y,\|\cdot\|_Y)$ be a normed space.
Suppose that the embedding 
$X\hookrightarrow Y$ 
is $G$-completely continuous.
Also assume that a double-suffix sequence $(u^L_n)$ in $X$ satisfies that 
\begin{align*}
&\sup_{n,L\in\N}\|u^L_n\|_X<\infty, \\
&\lim_{L\to\infty}\sup_{\phi\in B_{X^*}(1)}
\varlimsup_{n\to\infty}\sup_{g\in G}
\qty| \la \phi,g^{-1}u^L_n\ra |=0.
\end{align*}
Then it holds that 
\begin{equation*}
\lim_{L\to\infty}\varlimsup_{n\to\infty}
\|u^L_n\|_Y=0.
\end{equation*}
\end{theorem}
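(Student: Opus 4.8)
The plan is to argue by contradiction, extracting from the double-suffixed exactness hypothesis a single $G$-weakly null sequence whose $Y$-norms stay bounded away from zero, in violation of the $G$-complete continuity of the embedding $X\hookrightarrow Y$. First I would record the two quantities $a_L\coloneqq\varlimsup_{n\to\infty}\|u^L_n\|_Y$ and $b_L\coloneqq\sup_{\phi\in B_{X^*}(1)}\varlimsup_{n\to\infty}\sup_{g\in G}|\la\phi,g^{-1}u^L_n\ra|$, so that the hypotheses read $\sup_{n,L}\|u^L_n\|_X\eqqcolon M<\infty$ and $b_L\to 0$ as $L\toinfty$, while the goal is $a_L\to 0$. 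If this fails, then since $a_L\ge 0$ there are $\delta>0$ and a strictly increasing sequence $(L_k)$ with $a_{L_k}\ge 2\delta$ for all $k$; along it $b_{L_k}\to 0$ automatically.

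The core of the argument is a diagonal extraction of indices $n_k$ producing $v_k\coloneqq u^{L_k}_{n_k}$ with simultaneously (i) $\|v_k\|_Y>\delta$ and (ii) $v_k\to 0$ $G$-weakly in $X$. Because $a_{L_k}=\varlimsup_n\|u^{L_k}_n\|_Y\ge 2\delta$, the set of $n$ with $\|u^{L_k}_n\|_Y>\delta$ is infinite, which secures (i) no matter how large I insist $n_k$ to be. For (ii) I would fix once and for all a countable family $\{\phi_i\}_{i\in\N}$ that is norm-dense in $B_{X^*}(1)$ (available here since $X$ is reflexive and separable, hence $X^*$ is separable), and for each $k$ choose $n_k\ge k$ from the infinite set in (i) so large that $\max_{1\le i\le k}\sup_{g\in G}|\la\phi_i,g^{-1}u^{L_k}_{n_k}\ra|\le b_{L_k}+1/k$; this is possible because each of the finitely many $\varlimsup_n\sup_g|\la\phi_i,g^{-1}u^{L_k}_n\ra|$ is bounded by $b_{L_k}$. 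Then for every fixed $i$ and all $k\ge i$ one has $\sup_g|\la\phi_i,g^{-1}v_k\ra|\le b_{L_k}+1/k\to 0$, and the elementary bound $\sup_g|\la\phi-\phi_i,g^{-1}v_k\ra|\le\|\phi-\phi_i\|_{X^*}\,\|v_k\|_X\le\|\phi-\phi_i\|_{X^*}M$, valid since each $g^{-1}$ is an isometry, combines with density to upgrade this to $\sup_g|\la\phi,g^{-1}v_k\ra|\to 0$ for every $\phi\in X^*$; that is, $v_k\to 0$ $G$-weakly in the sense of \cref{def;DslSp}.

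Finally, since $v_k\to 0$ $G$-weakly it converges weakly to $0$ in $X$ (take $g={\rm Id}_X$ in the supremum), hence weakly to $0$ in $Y$ by continuity of the embedding; and $G$-complete continuity forces $(v_k)$ to converge strongly in $Y$, so its strong limit must coincide with the weak one, namely $\|v_k\|_Y\to 0$. This contradicts $\|v_k\|_Y>\delta$, and therefore $a_L\to 0$, which is the assertion.

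I expect the main obstacle to be step (ii): the exactness hypothesis only controls, separately for each fixed $\phi$, the tail in $n$, so a single choice of $(n_k)$ must be coordinated across all test functionals at once, and the intersection over uncountably many $\phi$ of the "good'' index sets need not be cofinite. Reducing to a countable norming family and closing the gap by density is precisely what makes this coordination feasible, and this is the only place where separability (equivalently, weak-$*$ metrizability of $B_{X^*}$) of the ambient reflexive space enters the proof.
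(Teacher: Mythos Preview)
The paper does not actually prove this theorem: it is recalled in the appendix as \cite[Theorem~2.5]{Oku} and used as a black box, so there is no proof in the present paper against which to compare your argument.

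On its own merits your contradiction-plus-diagonal argument is sound. The extraction of $(n_k)$ so that $\|v_k\|_Y>\delta$ while $\sup_{g}|\la\phi_i,g^{-1}v_k\ra|\le b_{L_k}+1/k$ for all $i\le k$ is correctly justified (finitely many $\varlimsup$'s, each bounded by $b_{L_k}$, so a common tail works), and the density step upgrading this to arbitrary $\phi\in X^*$ via the uniform bound $\|g^{-1}v_k\|_X\le M$ is clean. The identification of the strong $Y$-limit as $0$ via weak convergence is also correct.

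The one point to flag is exactly the one you flag yourself: you invoke separability of $X$ (equivalently of $X^*$, since $X$ is reflexive) to obtain a countable norm-dense family in $B_{X^*}(1)$, whereas \cref{theorem;ProDeco in X} as stated here assumes only reflexivity. For the applications in this paper ($X=\dot W^{m,p}(\mathbb{R}^N)$) separability holds and your proof goes through verbatim. If one wants the statement at the generality literally written, the diagonal step as you phrase it does not close without some substitute for a countable norming set; you should either add separability as an explicit hypothesis or indicate how the coordination across all $\phi$ can be achieved otherwise.
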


\section{Abbreviated form of profile decomposition}\label{section:abbreviation}
In the profile decomposition theorem as before, we explicitly denoted the number of nontrivial profiles by $\Lambda\in\Z\cup\{0,\infty\}$, so that one can distinguish the infinite-profiles case from the finite-profiles case. 

In practice, however, many profile decomposition theorems use the abbreviated form as in the introduction. 
Here, we provide the abbreviated version of the profile decomposition theorem.

\begin{theorem}[Abbreviated form of~\cref{theorem;profile-decomp. in Wdmp}]\label{theorem:abbreviation}
Let $(u_n)$ be a bounded sequence in the dislocation Sobolev space $(\Wdmp, G[\RN,\R;\ppm])$. 
Then, there exist 
a subsequence of $(n)$, still denoted by $n$, 
profiles $\bbl{w}{l}\in \Wdmp$ $(l \in\NN)$, 
dislocations $(\dsl{y}{l}{n},\dsl{j}{l}{n} ) \in \RN\times\R$ $(l,n\in\NN)$, 
and residual terms $r^L_n\in \Wdmp$ $(L,n\in\NN)$,
with the relation of a double-suffix profile decomposition
\begin{equation}\notag 
u_{n} = \sum_{l=0}^L 
2^{\dsl{j}{l}{n}N/\ppm} \bbl{w}{l} \qty(2^{\dsl{j}{l}{n}} (\cdot - \dsl{y}{l}{n}))+r^L_{n} , 
\quad L,n \in \NN, 
\end{equation}
such that the following holds true:
\begin{enumerate}
\setlength{\itemsep}{0mm}
\setlength{\parskip}{0mm}

\item 
For all $n\in\NN$, 
$\dsl{y}{0}{n}=0$ and $\dsl{j}{0}{n}=0$.  
\item 
If $l\neq k$, then 
$|\dsl{j}{l}{n}-\dsl{j}{k}{n}|+
2^{\dsl{j}{k}{n}}
|\dsl{y}{l}{n}-\dsl{y}{k}{n}|
\toinfty$. 
\item 
For every $l\in\NN$, 
$2^{-\dsl{j}{l}{n}N/\ppm} u_{n} \qty( 2^{-\dsl{j}{l}{n}} \cdot + \dsl{y}{l}{n}) \to \bbl{w}{l}$ as $n\toinfty$ 
    weakly in  $\Wdmp$ and a.e.\  on $\RN$ $(l \in \NN)$. 
\item 
For every $k\in\NN$, 
\begin{equation*}
2^{-\dsl{j}{k}{n}N/\ppm} r^L_{n} \qty( 2^{-\dsl{j}{k}{n}} \cdot + \dsl{y}{k}{n})
\to 
\begin{cases}
0  &  \mbox{if} \ k=0,\ldots,L, \\
\bbl{w}{k}  &  \mbox{if} \ k\ge L+1, 
\end{cases}
\end{equation*}
weakly in $\Wdmp$ as $n\toinfty$. 
\item 
There holds 
\begin{align*}
\varlimsup_{n\toinfty}\|u_{n} \|_{\Wdmp}^p 
\ge \sum_{l=0}^\infty \|\bbl{w}{l}\|_{\Wdmp}^p 
+ \varlimsup_{L\to \infty} \varlimsup_{R\toinfty} \varlimsup_{n\toinfty}
\| r^L_{n} \|^p_{\dot{W}^{m,p}(\RN \setminus \Bnrl )  },   
\end{align*}
where $\Bnrl \coloneqq \bigcup_{l=0}^L 
B( \dsl{y}{l}{n},  2^{-\dsl{j}{l}{n}} R).$
\item 
There holds 
\begin{align*}
\lim_{L\toinfty}\sup_{\phi\in U}\varlimsup_{n\toinfty}\sup_{y\in\RN \!, \, j\in\R}
\abs{\la \phi, 2^{-jN/\ppm} r^L_n \qty(2^{-j}\cdot+y) \ra_{\Wdmp}} =0, 
\end{align*}
where $U \coloneqq B_{[\Wdmp]^*}(1)$.  
\end{enumerate}
\end{theorem}

\begin{proof}
Take a double-suffix profile decomposition given by~\cref{theorem;profile-decomp. in Wdmp}. 
At this time we renumber the subsequence and use the index $n$. 

If $\Lambda<+\infty$, then we set 
\begin{align*}
\bbl{w}{l}=0 \quad\text{for all}\s l\ge \Lambda+1, \\
\dsl{y}{l}{n}=0, \s \dsl{j}{l}{n}=0 \quad\text{for all}\s l \ge \Lambda+1, \ n\in\NN, \\ 
\dsl{y}{l}{n}=0, \s \dsl{j}{l}{n}=0  \quad\text{if}\s n<l.
\end{align*}
Then we get the above theorem. 

If $\Lambda=\infty$, then we set 
\begin{align*}
\dsl{y}{l}{n}=0, \s \dsl{j}{l}{n}=0  \quad\text{if}\s n<l.
\end{align*}
Then we get the above theorem. 
\end{proof}

\begin{corollary}
Let $u_n=\sum_{l=0}^L 2^{\dsl{j}{l}{n}N/\ppm} \bbl{w}{l} \qty(2^{\dsl{j}{l}{n}} (\cdot - \dsl{y}{l}{n}))+r^L_n$ $(n,L\in\NN)$ be a double-suffix profile decomposition as in the above theorem. 
Then this profile decomposition is the \emph{finite} profile decomposition if and only if 
there exists $L\in\NN$ such that 
$\bbl{w}{l}=0$ for all $l\ge L+1$. 
\end{corollary}

\section{Preliminaries}\label{section:preliminary}
We here provide some preliminary facts which are used in the paper. 

\paragraph{Dual spaces of Sobolev spaces}
We recall that duality structures of Sobolev spaces are well characterized via the Riesz representation theorem. 

\begin{lemma}[Dual space of $\Wdmp$]\label{lemma;dual Wdmp}
Let $1<p<N/m$ with  $m,N\in\N$.
Then for any 
$F\in [\Wdmp]^*$,  there exist 
$f_\alpha\in L^{p'}(\RN)$ $(\alpha\in(\NN)^N$ with $ |\alpha|=m)$ 
such that 
\begin{equation*}
\la F,u\ra =\sum_{|\alpha|=m} \int_{\RN} f_\alpha \d^\alpha u \,\dd x 
\mbox{\quad for all\s} u\in \Wdmp.
\end{equation*}
\end{lemma}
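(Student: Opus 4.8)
The plan is to realize $\Wdmp$ isometrically as a closed subspace of a finite direct sum of copies of $L^p(\RN)$, and then to read off the representation from the Hahn--Banach theorem together with the classical identification $(L^p)^*\cong L^{p'}$.

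First I would introduce the product space
\[
E \coloneqq \bigoplus_{|\alpha|=m} L^p(\RN), \qquad
\|(g_\alpha)_{|\alpha|=m}\|_E \coloneqq \Big( \sum_{|\alpha|=m} \|g_\alpha\|_{L^p(\RN)}^p \Big)^{1/p},
\]
together with the linear map $T\colon \Wdmp \to E$ defined on $C_c^\infty(\RN)$ by $Tu = (\d^\alpha u)_{|\alpha|=m}$ and extended by density. By the very definition of the norm on $\Wdmp$, one has $\|Tu\|_E = \|u\|_{\Wdmp}$, so $T$ is a linear isometry; in particular $T$ is injective, and its range $T(\Wdmp)$ is a closed subspace of $E$, being the isometric image of a complete space. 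Because $p<N/m$, each component of $Tu$ may legitimately be identified with the distributional derivative $\d^\alpha u$, so the integral $\int_{\RN} f_\alpha\,\d^\alpha u\dx$ appearing below has its intended meaning.

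Next I would transport $F$ to the range of $T$ and extend it. Given $F\in(\Wdmp)^*$, define $\tilde F$ on the subspace $T(\Wdmp)\subset E$ by $\tilde F(Tu) \coloneqq \la F,u\ra$; this is well defined since $T$ is injective, and $\|\tilde F\| = \|F\|$ since $T$ is an isometry. By the Hahn--Banach theorem, $\tilde F$ extends to some $G\in E^*$. Finally I would identify $E^*$: since $1<p<\infty$, the Riesz representation theorem gives $(L^p(\RN))^* \cong L^{p'}(\RN)$, and the dual of the finite direct sum $E$ is $\bigoplus_{|\alpha|=m} L^{p'}(\RN)$ with pairing $\la G,(g_\alpha)\ra = \sum_{|\alpha|=m}\int_{\RN} f_\alpha g_\alpha\dx$, where $G$ corresponds to a family $(f_\alpha)_{|\alpha|=m}$ with $f_\alpha\in L^{p'}(\RN)$. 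Evaluating $G$ on $Tu$ then yields, for every $u\in\Wdmp$,
\[
\la F,u\ra = \tilde F(Tu) = \la G, Tu\ra = \sum_{|\alpha|=m}\int_{\RN} f_\alpha\,\d^\alpha u\dx,
\]
which is the asserted formula.

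There is no genuine obstacle in this argument; it is entirely standard. The single ingredient that must be invoked with care is the duality $(L^p)^*=L^{p'}$, which requires $1<p<\infty$ and is precisely why the hypothesis excludes the endpoint $p=1$. The only subtlety worth flagging is that the representing family $(f_\alpha)$ is \emph{not} unique, because $T(\Wdmp)$ is in general a proper subspace of $E$ (the top-order derivatives $\d^\alpha u$ obey compatibility relations coming from the commutativity of mixed partials), so the Hahn--Banach extension $G$ is not canonical; since the statement asserts only existence, this causes no difficulty.
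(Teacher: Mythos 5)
Your proposal is correct and follows essentially the same route as the paper's own proof: an isometric embedding of $\Wdmp$ into the finite product $\bigoplus_{|\alpha|=m}L^p(\RN)$ via $u\mapsto(\d^\alpha u)_{|\alpha|=m}$, a Hahn--Banach extension of the transported functional, and the Riesz identification $(L^p)^*\cong L^{p'}$. Your closing remarks on the non-uniqueness of $(f_\alpha)$ and on why $p=1$ is excluded are accurate additions, but the core argument is identical to the paper's.
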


\begin{proof}
Let $M$ be the number of all multi-indices whose length is $m$, i.e., $M=\# \{\alpha \in (\NN)^N; \ |\alpha|=m \}$. Consider the product space $X \coloneqq L^{p}(\RN)^M$ equipped with the norm 
$\|u\|_X = ( \sum_{|\alpha|=m} \|u_\alpha\|_{L^{p}(\RN)}^{p} )^{1/p} $ for $u = (u_\alpha)_{|\alpha|=m} \in X$. 
Define an isometric injection $\kappa : \Wdmp \to X$ by  
$\kappa(u) = (\d^\alpha u)_{|\alpha|=m} \in X$, and set a closed subspace $Y \coloneqq \kappa (\Wdmp)$  of $X$. 

Let $F \in [\Wdmp]^*$. Then a bounded linear functional $\Phi \in Y^*$ is induced by $F$ as follows: for any $u \in Y$, $\la \Phi,u\ra  = \la F, \kappa^{-1} (u)\ra $, which is bounded and $\|\Phi\|_{Y^*} \le \|F\|_{(\Wdmp)^*}$. 
By the Hahn-Banach theorem, there is an extension $\hat{\Phi}$ of $\Phi$ onto whole $X$ such that $\|\hat{\Phi}\|_{X^*} = \|\Phi\|_{Y^*}$, i.e., $\hat{\Phi}\in X^*$. Then by the Riesz representation theorem, there exist $\phi_{\alpha} \in L^{p'}(\RN)$, $|\alpha|=m$, such that for all $u=(u_\alpha)_{|\alpha|=m} \in X$, 
\begin{equation*}
\la  \hat{\Phi}, u \ra  
=
\sum_{|\alpha|=m} \int_{\RN} \phi_\alpha u_\alpha \,\dd x. 
\end{equation*}
Therefore, we observe that for all $u \in \Wdmp$, 
\begin{equation*}
\la  F,u \ra 
=
\la  \Phi, \kappa(u) \ra 
=
\la  \hat{\Phi}, \kappa(u) \ra 
=
\sum_{|\alpha|=m} \int_{\RN} \phi_\alpha \d^\alpha u \,\dd x,
\end{equation*}
hence the conclusion.
\end{proof}

\paragraph{Besov spaces}
Here we recall homogeneous Besov spaces for the sake of the reader's convenience. 
For more details, we refer the reader to~\cite{G2}.
In what follows, we denote by $\mathcal{F}$ and $\mathcal{F}^{-1}$ 
the Fourier transformation and the  inverse Fourier transformation, respectively,
defined on $L^1(\RN), L^2(\RN)$ or $\mathcal{S}'(\RN)$,  
where $\mathcal{S}'(\RN)$ denotes the set of tempered distributions on $\RN$.

Let us recall the Littlewood-Paley decomposition of tempered distributions. 
Let $\{\phi_j\}_{j \in \Z}$ be a family in the Schwartz class satisfying the following properties:
\begin{align*}
&\supp \phi_j \subset \{\xi\in\RN; \ 2^{j-1} \le |\xi|\le 2^{j+1}\}; \\
&\begin{alignedat}{2}
&\sum_{j\in\Z} \phi_j (\xi) = 1, &\quad &\xi \neq 0; \\
&\phi_j (\xi)=\phi_0 (2^{-j}\xi), &\quad &\xi \in \RN, \s j\in \Z; \\
&\phi_{j-1}(\xi)+\phi_{j}(\xi)+\phi_{j+1}(\xi) = 1, &\quad &\xi \in \supp \phi_j, \s j\in\Z.
\end{alignedat}\\
\end{align*}
A family of operators $\{P_j\}$ is defined as follows:
\begin{equation*}
P_j u = \FF \phi_j \F u, \quad u\in \mathcal{S}'(\RN), \quad j \in \Z.
\end{equation*}
Then the Littlewool-Paley decomposition of $u\in \mathcal{S}'(\RN)$ is 
\begin{equation*}
    u = \sum_{j \in \Z} P_j u, \quad u \in \mathcal{S}'(\RN).
\end{equation*}

Under these settings, the homogeneous Besov space 
$\dot{B}^{s}_{p,q}(\RN)$ is defined as the set of 
all tempered distributions whose homogeneous Besov norm is finite; here 
the homogeneous Besov norm is defined as 
\begin{equation*}
\|u\|_{\dot{B}^{s}_{p,q}(\RN)}\coloneqq 
\|(\|2^{js} P_j u\|_{L^p(\RN)})_{j\in\Z}\|_{\ell^q(\Z)}
\end{equation*}
for $s \in \R, \ p\in [1,\infty], \ q\in [1,\infty]$ and $u\in \mathcal{S}'(\RN)$.


\end{document}